\definecolor{shadecolor}{gray}{0.8}
\definecolor{lgray}{gray}{0.5}
\newcommand{\cC}{{C}}
  \def\sw#1{{\sb{(#1)}}} 
  \def\su#1{{\sp{[#1]}}}
  \def\proofof#1{{\sl Proof~of~#1.~~}}
  \def\endproof{\hbox{$\sqcup$}\llap{\hbox{$\sqcap$}}\medskip} 
  \def\<{{\langle}} 
  \def\>{{\rangle}}
  \def\eps{\varepsilon}
  \def\note#1{{}} 
 \def\can{{\rm \textsf{can}}}
  \def\note#1{} 
  \def\cC{{\mathcal C}}
   \def\cH{{\mathcal H}}   
  \def\cO{{\mathcal O}}
  \def\beq{\begin{equation}} 
  \def\eeq{\end{equation}}
  \def\id{\mathrm{id}} 
  \def\im{{\rm Im}}
  \def\ot{{\otimes}}
  \def\roA{{\varrho^A}}
 \def\htau{\hat{\tau}}
 \def\coker{\mathrm{coker}}
    \def \hx{\widehat{x}}
\def \hy{\widehat{y}}
        \def \hz{\widehat{z}}
  \def\blambda{\bar{\lambda}}
  \newcounter{zlist} 
  \newenvironment{zlist}{\begin{list}{(\arabic{zlist})}{ 
  \usecounter{zlist}\leftmargin2.5em\labelwidth2em\labelsep0.5em 
  \topsep0.6ex
  \parsep0.3ex plus0.2ex minus0.1ex}}{\end{list}}
  \newcounter{blist} 
  \newenvironment{blist}{\begin{list}{(\alph{blist})}{ 
  \usecounter{blist}\leftmargin2.5em\labelwidth2em\labelsep0.5em 
  \topsep0.6ex 
  \parsep0.3ex plus0.2ex minus0.1ex}}{\end{list}} 
  \newcounter{rlist}
\def\stac#1{\raise-.2cm\hbox{$\stackrel{\displaystyle\otimes}{\scriptscriptstyle{#1}}$}}
\def\cten#1{\raise-.2cm\hbox{$\stackrel{\displaystyle\widehat{\otimes}}
{\scriptscriptstyle{#1}}$}}
  \def\Label#1{\label{#1}\ifmmode\llap{[#1] }\else 
  \marginpar{\smash{\hbox{\tiny [#1]}}}\fi} 
  \def\Label{\label}
  \newtheorem{proposition}{Proposition}[section]
  \newtheorem{lemma}[proposition]{Lemma} 
  \newtheorem{corollary}[proposition]{Corollary} 
  \newtheorem{theorem}[proposition]{Theorem} 
\theoremstyle{definition} 
  \newtheorem{definition}[proposition]{Definition}
  \newtheorem{example}[proposition]{Example}
  \theoremstyle{remark} 
  \newtheorem{remark}[proposition]{Remark}
  \newcounter{c} 
  \newcommand{\etyk}[1]{\vspace{-7.4mm}$$\begin{equation}\Label{#1} 
  \addtocounter{c}{1}} 
  \renewcommand{\]}{\ifnum \value{c}=1 $$\else \end{equation}\fi} 
\def\ot{\otimes}
\def\CC{{\mathbb C}}
\def\NN{{\mathbb N}}
\def\PP{{\mathbb P}}
\def\RR{{\mathbb R}}
\def\TT{{\mathbb T}}
\def\ZZ{{\mathbb Z}}
\def\gg{{\mathfrak G}}
\newcommand{\Cc}{\mathcal{C}}
\newcommand{\Ii}{\mathcal{I}}
\newcommand{\Ss}{\mathcal{S}}
\newcommand{\Vv}{\mathcal{V}}
\def\*C{{}^*\hspace*{-1pt}{\Cc}}
\def\text#1{{\rm {\rm #1}}}
 \def\1{\mathbf{1}}
      \def\tr{\mathrm{Tr}\hspace{3pt}}
\def\ha{\hat{a}}
\def\hb{\hat{b}}
\def\hx{\hat{x}}
\def\hy{\hat{y}}
\def\hz{\hat{z}}
\def\bpartial{\bar{\partial}}
\def\bdelta{\bar{\delta}}
\def\btau{\bar{\tau}}
\def\bomega{\bar{\omega}}
\def\pz{\partial_z}
\def\pzs{\partial_{z^*}}
\def\gk{\mathrm{GKdim}}
\def\gg{>\!\!>}
\def\pil{{\PP_\theta}}
\def\opil{{\cO(\PP_\theta)}}
\def\storus{{\cC^\infty(\TT_\theta)}}
\def\spil{{\cC^\infty(\PP_\theta)}}
\begin{document}
\title[On the noncommutative pillow, cones and lens spaces]{Smooth geometry of the noncommutative pillow, cones and lens spaces}

\author{Tomasz Brzezi\'nski}
 \address{ Department of Mathematics, Swansea University, 
  Swansea SA2 8PP, U.K.} 
  \email{T.Brzezinski@swansea.ac.uk}   
 \author[Andrzej Sitarz]{Andrzej Sitarz ${}^\dagger$}\thanks{${}^\dagger$ Supported by NCN grant 2012/06/M/ST1/00169}
 \address{Institute of Physics, Jagiellonian University,
prof.\ Stanis\l awa \L ojasiewicza 11, 30-348 Krak\'ow, Poland.\newline\indent
 Institute of Mathematics of the Polish Academy of Sciences,
\'Sniadeckich 8, 00-950 Warszawa, Poland.}
  \email{andrzej.sitarz@uj.edu.pl}   
 \subjclass[2010]{58B34; 58B32} 
 \keywords{Integrable differential calculus; Dirac operator; noncommutative pillow; quantum cone; quantum lens space}
 \date\today
 
\begin{abstract}
This paper proposes a new notion of  smoothness of algebras, termed {\em differential smoothness}, that  combines the existence of a top form in a differential calculus over an algebra together with a strong version of  the Poincar\'e duality realized as an isomorphism between complexes of differential and integral forms. The quantum two- and three-spheres, disc, plane and the noncommutative torus are all smooth in this sense. Noncommutative  coordinate algebras of deformations of several examples of classical orbifolds such as the pillow orbifold, singular cones and lens spaces are also differentially smooth.  Although surprising this is not fully unexpected as these algebras are known to be {\em homologically smooth}. The study of Riemannian aspects of the noncommutative pillow and Moyal deformations of cones  leads to spectral triples that satisfy the orientability condition that is known to be broken for classical orbifolds.
\end{abstract}
\maketitle

\tableofcontents

\section{Introduction}
It is often the case that a deformation of a singular variety or an orbifold produces a noncommutative object that behaves as if it were a smooth manifold. This observation underlies the theory of noncommutative resolutions \cite{Van:cre}. Recent papers \cite{Brz:smo} and \cite{Brz:con} contain illustrations of this phenomenon on the algebraic level on a number of (families of) explicit examples such as  quantum teardrops \cite{BrzFai:tea}, quantum (classically singular) lens spaces \cite{HonSzy:len}, the noncommutative pillow \cite{BraEll:sph} (see also \cite{Eva:orb} and \cite[Section~3.7]{EvaKaw:sym}) and quantum cones.  In \cite{Brz:smo} and \cite{Brz:con}  the smoothness is understood in the homological sense, i.e.\hspace{3pt}as  the existence of a finite-length resolution of algebras by finitely generated projective bimodules; see \cite[Erratum]{Van:rel}, \cite{Kra:Hoc}. In the present article we establish that  also from the point of  differential and spectral geometry the noncommutative pillow, quantum cones and lens spaces behave as smooth objects. 

The idea behind the {\em differential smoothness} of algebras is rooted in the observation that a classical smooth orientable manifold, in addition to de Rham complex of differential forms, admits also the complex of {\em integral forms} isomorphic to  the de Rham complex; \cite[Section~4.5]{Man:gau}. The de Rham differential can be understood as a special left connection, while the boundary operator in the complex of integral forms is an example of a {\em right connection}. In the standard (commutative) differential geometry a knowledge of the integral forms and right connections does not contribute to a better understanding of the structure of a manifold, the existence of the Hodge star and the Poincar\'e duality are fully sufficient. On the other hand it becomes  useful in defining the Berezin integral on a supermanifold, and  precisely in this context right connections and integral forms have been  introduced in \cite[Chapter~4]{Man:gau}. Supergeometry might be interpreted as one of the predecessors or  special cases of noncommutative geometry thus it seems quite natural to expect that the (rather mild) usefulness of integral forms in supergeometry should become more pronounced in the noncommutative setup. This expectation  led to the introduction of a noncommutative version of a right connection termed a {\em hom-connection} (or {\em divergence}) in \cite{Brz:con} and then to the associated complex of integral forms in \cite{BrzElK:int}. 

While every algebra admits a differential calculus (albeit not necessarily of any geometric interest), a priori not every algebra must admit a complex of integral forms relative to a given differential calculus. The main results of \cite{BrzElK:int} show that divergences (though not necessarily flat) can be defined for calculi generated by twisted multiderivations and the examples studied there feature complexes of integral forms isomorphic to noncommutative de Rham complexes (of dimension equal to the classical dimension of non-deformed spaces). This can be thought of as a strong version of Poincar\'e duality, since  it is an isomorphism of the complexes of differential and integrable forms, not just of their homologies, that is observed. Motivated by these examples, we introduce here the term {\em integrable differential calculus} to indicate a calculus that is isomorphic to the associated complex of integral forms.  Informally, an algebra that is a deformation of the coordinate algebra of a classical variety and has an integrable differential calculus of classical dimension can be understood as being {\em differentially smooth}. More formally, an affine or finitely generated algebra with integer Gelfand-Kirillov dimension, say $n$, is said to be {\em differentially smooth} if it admits an integrable $n$-dimensional differential calculus that is connected in the sense that the kernel of the differential restricted to the algebra consists only of scalar multiples of the identity. In this sense the algebras studied in \cite{BrzElK:int} are smooth, a fact that should not be too surprising, since these are  $q$-deformations of classically smooth compact manifolds (the two- and three-spheres and planes). The novelty of the present paper is that the differential smoothness is established for algebras that describe noncommutative versions of classically singular spaces (orbifolds).  In particular we prove:\medskip

\noindent{\bf Theorem~A.} 
{\em Coordinate algebras of the noncommutative pillow, cones and lens spaces are differentially smooth, i.e.\hspace{3pt}they admit connected integrable differential calculi of dimensions 2, 2 and 3 respectively.}
\medskip

Theorem~A indicates that noncommutative coordinate algebras of deformations of some classically non-smooth spaces or orbifolds are not only homologically smooth, but also admit types of differential calculi characteristic of smooth manifolds. In short, at this level at least, we are not able to distinguish between noncommutative deformations of manifolds and orbifolds, and should such a distinction be possible it must occur on finer geometric levels. The first such level that should be studied are the Riemannian or metric aspects of differentially smooth algebras, which following \cite{Connes} are encoded in spectral triples. It was shown in \cite{ReVa} that on the classical level, a spectral triple  can distinguish between a manifold and a (good) orbifold in the sense that the {\em orientability condition} (meaning the existence of a Hochschild cycle whose image in the differential calculus induced by the Dirac operator is the chirality operator) that holds in the former case fails to hold in the latter. We put the noncommutative pillow and a special case of the quantum cone corresponding to the Moyal deformation of the unit disc to the test, and find spectral triples that satisfy a milder version of the orientability condition, more precisely we construct cycles but not Hochschild cycles that map to the chirality operator. One might speculate whether the fact that constructed cycles are not Hochschild cycles is a remnant of the singular nature of the classical counterparts of noncommutative manifolds. The construction of spectral triples on any invariant subalgebras with respect to the action of a finite group has been studied in several cases following the standard method of restriction of the known spectral triple over the full algebra. This has been studied for the lens spaces \cite{SiVe} and three-dimensional Bieberbach manifolds \cite{Adalgott} allowing the classification of spin structures in the noncommutative counterparts of these manifolds. However, in neither of the constructed examples an orientation cycle was constructed. Although in the $q$-deformed case the existence of such a cycle could be doubtful (as the spectral triple for the $SU_q(2)$ itself has no orientation cycle) it is expected that such a cycle  exists for the more regular case of noncommutative quotients of the noncommutative torus and $\theta$-deformations. The  construction of an orientation cycle presented here for the noncommutative pillow and the Moyal $N=2$ cone is the first such result.  \medskip

\noindent {\bf Notation.}
All algebras considered in this paper are associative and unital over the complex field $\CC$. If $X$ is  a classical geometric space (affine space, manifold, orbifold etc.), $\cO(X_q)$ denotes the noncommutative coordinate algebra of the (non-existing in a usual sense) quantum space $X_q$. Although we write $\Omega A$ for a differential calculus over an algebra $A$, in order to avoid overloading notation, we write $\Omega(X_q)$ for a differential calculus over $\cO(X_q)$. Similarly, $\Ii_k (X_q)$ means integral $k$-forms over $\cO(X_q)$, i.e.\hspace{3pt}right $\cO(X_q)$-module homomorphisms $\Omega^k(X_q) \to \cO(X_q)$.

\section{Integrable differential calculi over noncommutative algebras}
\setcounter{equation}{0}
\subsection{Integrable calculi and differential smoothness of affine algebras}
By a {\em differential graded algebra} we mean a non-negatively graded algebra $\Omega$ (with the product traditionally denoted by   $\wedge$) together with the degree-one linear map $d: \Omega^\bullet \to \Omega^{\bullet+1}$ that satisfies the graded Leibniz rule and is such that $d\circ d=0$. We say that a differential graded algebra $(\Omega, d)$ is a {\em calculus over an algebra $A$} if $\Omega^0=A$ and, for all $n\in \NN$,  $\Omega^n=AdA\wedge dA\wedge \cdots \wedge dA$ ($dA$ appears $n$-times). In this case we write $\Omega A$. Note that due to the Leibniz rule $\Omega^n A = dA\wedge dA\wedge \cdots \wedge dA\, A$ too. A differential calculus $\Omega A$ is said to be {\em connected} if $\ker d\mid_A = \CC.1$. If $A$ is a complex $*$-algebra then it is often requested that $\Omega A$ be a $*$-algebra and that $*\circ d = d\circ *$. In this situation one refers to $\Omega A$ as  a $*$-differential calculus. If $B$ is a subalgebra of $A$, then by restriction of $\Omega A$ to $B$ we mean the differential calculus $\Omega B$ with $\Omega^n B = BdB\wedge dB\wedge \cdots \wedge dB \subseteq \Omega^nA$

A calculus $\Omega A$ is said to have {\em dimension $n$} if $\Omega^nA \neq 0$ and $\Omega^mA = 0$ for all $m >n$. An $n$-dimensional calculus $\Omega A$ admits a {\em volume form} if $\Omega^nA$ is isomorphic to $A$ as a left and right $A$-module. The existence of a right $A$-module isomorphism means that there is a free generator of $\Omega^nA$ (as a right $A$-module), i.e.\hspace{3pt}$\omega \in \Omega^nA$, such that all elements of $\Omega^nA$ can be uniquely written as $\omega a$, $a\in A$. We refer to such  a generator $\omega$ as to a {\em volume form} on $\Omega A$. 
The right $A$-module isomorphism $\Omega^n A \to A$ corresponding to a volume form $\omega$ is denoted by $\pi_\omega$, i.e.
\begin{equation}\label{vol.iso}
\pi_\omega(\omega a) = a, \qquad \mbox{for all $a\in A$}.
\end{equation}
Since $\Omega^n A$ is also isomorphic to $A$ as a left $A$-module, any volume form induces an algebra automorphism $\nu_\omega$ of $A$ by the formula 
\begin{equation}\label{vol.auto}
a \omega = \omega \nu_\omega(a).
\end{equation}

Dually to a differential calculus on $A$ one considers its {\em integral calculus}; see \cite{Brz:con}, \cite{BrzElK:int}. Let $\Omega A$ be a differential calculus on $A$. The space of $n$-forms $\Omega^nA$ is an $A$-bimodule. Let $\Ii_nA$ denote the right dual of $\Omega^n A$, i.e.\hspace{3pt}the space of all right $A$-linear maps $\Omega^n A \to A$. Each of the $\Ii_n A$ is an $A$-bimodule with the actions
$$
(a \cdot \phi \cdot b) (\omega) = a\phi(b\omega), \qquad \mbox{for all}\quad  \phi\in \Ii_{n} A,\hspace{3pt}\omega\in \Omega^n A,\hspace{3pt} a,b \in A.
$$
The direct sum  of all the $\Ii_nA$, denoted $\Ii A = \oplus_n \Ii_nA$, is a right $\Omega A$-module with action
\begin{equation}\label{int.right}
(\phi \cdot \omega)(\omega ') = \phi(\omega\wedge \omega'), \qquad \mbox{for all}\quad  \phi\in \Ii_{n+m} A,\hspace{3pt}\omega\in \Omega^n A,\hspace{3pt}\omega'\in \Omega^mA.
\end{equation}
A  {\em divergence} on $A$ is a linear map $\nabla: \Ii_1A\to A$, such that
\begin{equation}\label{hom.Leibniz}
\nabla (\phi\cdot a) = \nabla(\phi) a + \phi(da), \qquad \mbox{for all}\quad  \phi\in \Ii_{1} A,\hspace{3pt}a \in A.
\end{equation}
A divergence can be extended to the whole of $\Ii A$,  $\nabla_n: \Ii_{n+1} A\to \Ii_n A$, by setting 
\begin{equation}\label{hom.ext}
\nabla_n(\phi)(\omega)= \nabla  (\phi\cdot \omega) + (-1)^{n+1} \phi(d\omega), \qquad \mbox{for all \,} \phi\in \Ii_{n+1} A, \hspace{3pt}\omega \in \Omega^{n}A.
\end{equation}
A combination of \eqref{hom.Leibniz} with \eqref{hom.ext} yields the following Leibniz rule, for all $\phi\in \Ii_{m+n+1} A$ and $\omega\in \Omega^m A$,
\begin{equation}\label{hom.Leibniz.n}
\nabla_{n}(\phi \cdot \omega) = \nabla_{m+n}(\phi )\cdot \omega + (-1)^{m+n} \phi \cdot d\omega;
\end{equation}
see \cite[3.2~Lemma]{Brz:con}.

A divergence is said to be {\em flat} if $\nabla\circ \nabla_1 =0$. This then implies that $\nabla_n\circ\nabla_{n+1} =0$, for all $n\in \NN$, hence $\Ii A$ together with the $\nabla_n$ form a chain complex, which is termed the {\em complex of integral forms} over  $A$. The cokernel map of $\nabla$, i.e.\hspace{3pt}$\Lambda: A \to \coker \nabla = A/\im \nabla$ is called the {\em integral on $A$ associated to $\Ii A$}. Note that, in general, it is not guaranteed that a given differential calculus on $A$ will admit a divergence on $A$ and even if it admits such a divergence that it would be flat; see \cite{BrzElK:int}.

Given a left $A$-module  $X$ with action $a\cdot x$, for all $a\in A$, $x\in X$, and an algebra automorphism $\nu$ of $A$, the notation ${}^\nu X$ stands for $X$ with the $A$-module structure twisted by $\nu$, i.e.\hspace{3pt}with the $A$-action $a\otimes x\mapsto \nu(a)\cdot x$.

The following  definition introduces notions which form the backbone of the majority of this paper. 

\begin{definition}\label{def.integrable}
An $n$-dimensional  differential calculus $\Omega A$ is said to be {\em integrable} if 
 $\Omega A$ admits a complex of integral forms $(\Ii A, \nabla)$ for which 
there exist an algebra automorphism $\nu$ of $A$ and $A$-bimodule isomorphisms $\Theta_k: \Omega^k A \to {}^\nu \Ii_{n-k} A$, $k=0,\ldots , n$,  rendering commutative the following diagram:
$$
\xymatrix{ A\ar[r]^d \ar[d]_{\Theta_0} & \Omega^1A\ar[r]^d \ar[d]_{\Theta_1} & \Omega^2A \ar[r]^-d\ar[d]_{\Theta_2} & \ldots  \ar[r]^-d & \Omega^{n-1}A \ar[r]^d\ar[d]_{\Theta_{n-1}} &\Omega^nA\ar[d]^{\Theta_n}\\
{}^\nu \Ii_n A\ar[r]^-{\nabla_{n-1}} &{}^\nu \Ii_{n-1} A\ar[r]^-{\nabla_{n-2}} &  {}^\nu \Ii_{n-2} A\ar[r]^-{\nabla_{n-3}} & \ldots  \ar[r]^-{\nabla_1} &\hspace{3pt}{}^\nu \Ii_1 A\ar[r]^\nabla & {}^\nu \! A  \, .}
$$

The $n$-form $\omega: = \Theta_n^{-1}(1) \in \Omega^n A$ is called an {\em integrating volume form}.
\end{definition}

Examples of algebras admitting integrable calculi discussed in \cite{BrzElK:int} include the algebra of complex matrices $M_N(\CC)$ with the $N$-dimensional calculus generated by derivations \cite{Dub:der}, \cite{DubKer:non}, the quantum group $SU_q(2)$ with the three-dimensional left covariant calculus \cite{Wor:twi} and the quantum standard sphere with the restriction of the above  calculus. 

The following theorem indicates that the integrability of a differential calculus  can be defined without explicit reference to integral forms.

\begin{theorem}\label{thm.integrable}
The following statements about an $n$-dimensional differential calculus $\Omega A$ over an algebra $A$ are equivalent:
\begin{zlist}
\item $\Omega A$  is an integrable differential calculus.
\item There exist an algebra automorphism $\nu$ of $A$ and $A$-bimodule isomorphisms $\Theta_k: \Omega^k A \to {}^\nu \Ii_{n-k} A$, $k=0,\ldots , n$, such that, for all $\omega' \in \Omega^k A$, $\omega'' \in \Omega^m A$,
\begin{equation}\label{linearity}
\Theta_{k+m} (\omega'\wedge \omega'') = (-1)^{(n-1)m}\Theta_k(\omega')\cdot \omega''.
\end{equation}
\item There exist an algebra automorphism $\nu$ of $A$ and an $A$-bimodule map $\vartheta : \Omega^n A \to {}^\nu A$ such that all left multiplication maps 
$$
\ell^k_\vartheta : \Omega^kA \to \Ii_{n-k} A, \qquad \omega'\mapsto \vartheta \cdot \omega', \qquad k=0,1,\ldots , n,
$$
where the actions $\cdot$ are defined by \eqref{int.right}, are bijective.
\item $\Omega A$ admits a volume form $\omega$ such that 
all left multiplication maps 
$$
\ell^k_{\pi_\omega} : \Omega^kA \to \Ii_{n-k} A, \qquad \omega'\mapsto \pi_\omega \cdot \omega', \qquad k=1,\ldots , n-1,
$$
where $\pi_\omega$ is defined by \eqref{vol.iso}, are bijective.
\end{zlist}
\end{theorem}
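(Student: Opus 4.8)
The plan is to prove the four statements equivalent by separating the genuinely structural step (1)$\Leftrightarrow$(2) from the two essentially bookkeeping steps (2)$\Leftrightarrow$(3) and (3)$\Leftrightarrow$(4). The object that ties everything together is the top map $\vartheta := \Theta_n \colon \Omega^n A \to {}^\nu A$ (equivalently, the integrating volume form $\omega := \Theta_n^{-1}(1)$); the whole family $\Theta_k$ will be reconstructed from it once multiplicativity \eqref{linearity} is available, and conversely the entire integral-forms structure of (1) will be built from $\Theta_n$ and $d$.

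The equivalences (2)$\Leftrightarrow$(3)$\Leftrightarrow$(4) are bookkeeping about the right $\Omega A$-action and volume forms. First I would check that each $\ell^k_\vartheta$ is automatically an $A$-bimodule map $\Omega^k A \to {}^\nu\Ii_{n-k}A$, directly from the definition \eqref{int.right} and the bimodule law on $\Ii A$. The content of \eqref{linearity} is then exactly the associativity $(\vartheta\cdot\omega')\cdot\omega'' = \vartheta\cdot(\omega'\wedge\omega'')$ of that action (up to the sign $(-1)^{(n-1)m}$), which always holds; so given (3) one sets $\Theta_k := (-1)^{(n-1)(n-k)}\ell^k_\vartheta$ and reads off (2), while given (2) the identification $\Theta_k = (-1)^{(n-1)(n-k)}\ell^k_{\Theta_n}$, obtained by taking $k+m=n$ in \eqref{linearity} and using $\Ii_0 A \cong A$, shows the $\ell^k_{\Theta_n}$ inherit bijectivity from the $\Theta_k$, with $\vartheta = \Theta_n$. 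For (3)$\Leftrightarrow$(4), bijectivity of $\ell^n_\vartheta$ says precisely that $\vartheta$ is a right-module isomorphism $\Omega^n A \cong A$, so $\omega := \vartheta^{-1}(1)$ is a free right generator; then $\vartheta(a\omega) = \nu(a)$ forces $a\mapsto a\omega$ to be bijective as well, making $\omega$ a genuine volume form with $\nu = \nu_\omega$ and $\vartheta = \pi_\omega$. Conversely a volume form makes $\ell^0$ and $\ell^n$ bijective for free, which is exactly why (4) only needs to assume the range $k = 1,\dots,n-1$.

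The structural equivalence is (1)$\Leftrightarrow$(2). Given (2), I would define a candidate divergence $\nabla := \Theta_n\circ d\circ\Theta_{n-1}^{-1}\colon \Ii_1 A\to A$; writing $\phi = \Theta_{n-1}(\eta)$ and using \eqref{linearity} to pass $\Theta_{n-1}(\eta)\cdot a$ and $\Theta_{n-1}(\eta)\cdot da$ through the graded Leibniz rule for $d$, one verifies \eqref{hom.Leibniz}, so $\nabla$ is a divergence. The decisive remaining point is that the canonical extensions $\nabla_k$ produced by \eqref{hom.ext} agree with the diagonal maps $\Theta_{n-k}\circ d\circ\Theta_{n-k-1}^{-1}$; once this holds the diagram of Definition~\ref{def.integrable} commutes and flatness $\nabla\circ\nabla_1 = 0$ follows instantly from $d\circ d = 0$. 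Conversely, given (1), I would prove \eqref{linearity} by induction on the degree $m$ of $\omega''$: the case $m=0$ is right $A$-linearity of $\Theta_k$, the case $m=1$ (with $\omega''=db$) follows by feeding $\Theta_k(\omega')$ and $\omega=b$ into the Leibniz rule \eqref{hom.Leibniz.n} and invoking commutativity $\nabla_{n-k-1}\Theta_k = \Theta_{k+1}d$, and the general case follows from these two by associativity of $\wedge$ and of the $\Omega A$-action.

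I expect the main obstacle to be the sign bookkeeping in matching \eqref{hom.ext} with the diagonal maps $\Theta_{n-k}\, d\,\Theta_{n-k-1}^{-1}$: one must expand $\nabla_k(\psi)(\omega) = \nabla(\psi\cdot\omega) + (-1)^{k+1}\psi(d\omega)$, rewrite $\psi\cdot\omega$ and $d(\xi\wedge\omega)$ via \eqref{linearity} and the graded Leibniz rule, and check that the two occurrences of $\psi(d\omega)$ cancel while the sign on the surviving $\Theta_{n-k}(d\xi)(\omega)$ term collapses to $+1$. This is where the precise exponent $(-1)^{(n-1)m}$ in \eqref{linearity} earns its keep, and keeping every sign consistent across the two inductions is the delicate part; the rest of the argument is formal.
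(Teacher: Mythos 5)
Your proposal is correct and follows essentially the same route as the paper: the same induction via the graded Leibniz rule, \eqref{hom.Leibniz.n} and the commuting diagram for (1)$\Rightarrow$(2), the same identification $\Theta_k = \pm\,\ell^k_\vartheta$ with $\vartheta = \Theta_n = \pi_\omega$ for the middle equivalences, and the same diagonal maps $\Theta_{n-k}\circ d\circ \Theta_{n-k-1}^{-1}$ serving as the divergence and its extensions satisfying \eqref{hom.ext}. The only differences are organizational and cosmetic: you prove pairwise equivalences where the paper runs the cycle $(1)\Rightarrow(2)\Rightarrow(3)\Rightarrow(4)\Rightarrow(1)$ (so your (2)$\Rightarrow$(1) is precisely the second half of the paper's (4)$\Rightarrow$(1)), and your sign $(-1)^{(n-1)(n-k)}$ coincides with the paper's $(-1)^{(n-1)k}$ because $(n-1)n$ is even.
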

\begin{proof}
(1) $\Rightarrow$ (2) 
The existence of an algebra automorphism $\nu$  and maps $\Theta_k: \Omega^k A \to {}^\nu \Ii_{n-k} A$ that make the diagram in Definition~
\ref{def.integrable} commute are parts of the definition of integrability of a differential calculus. We will prove that the  $\Theta_{k+m}$ satisfy equations \eqref{linearity} by induction with respect to $k+n$. 

First, for $k+m=0$ (\ref{linearity}) holds by definition. With the inductive
assumption that the formula is true for all $k+m < p \leq n$,  it needs to be demonstrated that (\ref{linearity}) is also true for $p$. Using the commutativity of the diagram in Definition~\ref{def.integrable}, the Leibniz rule and (\ref{hom.Leibniz.n}), we can compute: 
\begin{eqnarray*}
\Theta_{k+m+1} (\omega \wedge da) &=& (-1)^{k+m} \left( \Theta_{k+m+1}(d (\omega a) ) - \Theta_{k+m+1}(( d \omega) a) \right) \\
&=&  (-1)^{k+m} \left( \nabla_{n-k-m-1} \Theta_{k+m} (\omega a) -  \Theta_{k+m+1}( d \omega) a \right) \\
&=&  (-1)^{k+m} \left( \nabla_{n-k-m-1} ( \Theta_{k+m} (\omega) a) -  ( \nabla_{n-k-m-1} \Theta_{k+m} (\omega)) a \right) \\
&=&  (-1)^{k+m} (-1)^{n-k-m-1} \Theta_{k+m} (\omega) \cdot da = (-1)^{n-1}  \Theta_{k+m} (\omega) \cdot da.
\end{eqnarray*}
The inductive assumption and the fact that every element of $\Omega^{m} A$ is a linear combination of products of 
$m-1$-forms with exact one-forms, imply the required equality,
$$
\Theta_{k+m+1} (\omega'\wedge \omega'') = (-1)^{(n-1)(m+1)}\Theta_{k}(\omega')\cdot \omega'',
$$
for all $\omega' \in \Omega^k A$, $\omega'' \in \Omega^{m+1} A$. The assertion follows by the principle of mathematical induction.

(2) $\Rightarrow$ (3) Given a system of $A$-bimodule isomorphisms $\Theta_k: \Omega^k A \to {}^\nu \Ii_{n-k} A$, $k=0,\ldots , n$ that satisfy \eqref{linearity}, define
$$
\vartheta := \Theta_0(1).
$$
Since $\Theta_n$  satisfies \eqref{linearity}, for all $\omega\in \Omega^nA$,
$$
\vartheta (\omega) = \Theta_0(1)(\omega) = \Theta_0(1) \cdot \omega = \Theta_n(\omega).
$$
Thus $\vartheta = \Theta_n$ and, in particular, it is an $A$-bimodule map as stated. Again by \eqref{linearity}, for all $\omega'\in \Omega^kA$,
$$
\Theta_k (\omega') = (-1)^{(n-1)k} \Theta_0(1)\cdot \omega' = (-1)^{(n-1)k} \vartheta \cdot \omega' ,
$$
i.e.\hspace{3pt}$\Theta_k  = (-1)^{(n-1)k} \ell^k_\vartheta$, and hence all the $\ell^k_\vartheta$ are bijective, as required.

(3) $\Rightarrow$ (4) Note that by the definition of the action \eqref{int.right} $\vartheta = \ell_\vartheta^n$, hence $\vartheta$ is an $A$-bimodule isomorphism. Consequently, $\Omega^n A$ is isomorphic to $A$ as a left and right $A$-module, hence $\omega := \vartheta^{-1}(1)$ is a volume form. Since  $\vartheta^{-1}$ is a bimodule map  ${}^\nu A \to \Omega^n A$, for all $a\in A$,
$$
a = \pi_\omega(\omega a) = \pi_\omega(\vartheta^{-1}(1) a) = \pi_\omega (\vartheta^{-1}(a)),
$$
i.e.\hspace{3pt}$\pi_\omega = \vartheta$, hence all the $\ell^k_{\pi_\omega}$ are bijective.

(4) $\Rightarrow$ (1) Given a volume form $\omega \in \Omega^nA$, define
\begin{equation}\label{theta.k}
\Theta_k = (-1)^{(n-1)k} \ell^k_{\pi_\omega}: \Omega^kA \to \Ii_{n-k}A, \qquad k=0,1,\ldots, n.
\end{equation}
By assumption $\Theta_k$ are bijective for $k=1,\ldots, n -1$. Note that $\Theta_n = \pi_\omega$, hence it is bijective too. We will next show that the map
$$
\Theta^{-1}_0: \Ii_n A \to A, \qquad \phi\mapsto \nu_\omega^{-1} (\phi(\omega)),
$$
where $\nu_\omega$ is the algebra automorphism associated to $\omega$ via \eqref{vol.auto}, is the inverse of $\Theta_0$. For all $a\in A$,
\begin{eqnarray*}
\Theta_0^{-1}\left(\Theta_0(a)\right) &=& \nu_\omega^{-1}\left(\Theta_0(a)(\omega)\right) =  \nu_\omega^{-1}\left(\pi_\omega(a\omega)\right)\\
&=& \nu_\omega^{-1}\left(\pi_\omega(\omega\nu_\omega(a))\right) = \nu_\omega^{-1}\left(\nu_\omega(a)\right) =a,
\end{eqnarray*}
by the definitions of $\nu_\omega$ and $\pi_\omega$. On the other hand, for all $\phi\in \Ii_n A$ and $a\in A$,
\begin{eqnarray*}
\Theta_0\circ \Theta_0^{-1}\left( \phi\right)(\omega a) &=& \Theta_0\left( \nu_\omega^{-1} (\phi(\omega))\right)(\omega a) = \pi_\omega\left(\nu_\omega^{-1} (\phi(\omega))\omega a\right)\\
&=& \pi_\omega\left(\omega\phi(\omega) a\right) = \phi(\omega) a = \phi(\omega a),
\end{eqnarray*}
by the definitions of $\nu_\omega$ and $\pi_\omega$ and the right $A$-linearity of $\phi$. Since $\Omega^n A$ is generated by $\omega$ this means that the composite map $\Theta_0\circ \Theta_0^{-1}$ is the identity. Hence $\Theta_0^{-1}$ is the inverse of $\Theta_0$, as claimed.

Directly from definition \eqref{theta.k}, $\Theta_k$ satisfy equations \eqref{linearity}. In particular, they are right $A$-module maps. Next note that, for all $a\in A$, $\omega''\in \Omega^n$,
\begin{equation}\label{iso.bil}
\pi_\omega(a \omega'') = \nu_\omega(a) \pi_\omega( \omega''), \qquad \pi_\omega(\omega''a)  = \pi_\omega(\omega'')a.
\end{equation}
Thus, for all $a\in A$,  $\omega'\in \Omega^kA$, and $\omega''\in \Omega^{n-k}A$,
\begin{eqnarray*}
\Theta_k(a\omega')(\omega'') &=& (-1)^{(n-1)k} \pi_\omega(a\omega'\wedge\omega'') \\
&=& (-1)^{(n-1)k} \nu_\omega(a) \pi_\omega(\omega'\wedge\omega'') = \nu_\omega(a) \Theta_k(\omega')(\omega''),
\end{eqnarray*}
by the first of equation \eqref{iso.bil}. Therefore, all $\Theta_k $ given by \eqref{theta.k} are bimodule isomorphisms $\Omega^kA \to {}^{\nu}\Ii_{n-k}A$, where  $\nu = \nu_\omega$. 

Let us define:
\begin{equation}\label{nabla.k}
\nabla_k = \Theta_{n-k}\circ d\circ \Theta_{n-k-1}^{-1} : \Ii_{k+1} A\to \Ii_{k} A.
\end{equation}
Obviously, the maps $\nabla_k$ will make the diagram in Definition~\ref{def.integrable} commute, and, since $d\circ d =0$ also $\nabla_{k-1}\circ \nabla_k =0$. We need to prove that $\nabla := \nabla_0$ is a divergence and that all the remaining $\nabla_k$ given by \eqref{nabla.k} extend $\nabla$ in the sense of equalities \eqref{hom.ext}. 

For all $a\in A$ and $\phi \in \Ii_1A$,
\begin{eqnarray*}
\nabla(\phi\cdot a) &=& \Theta_{n}\circ d\circ \Theta_{n-1}^{-1}(\phi\cdot a) = \Theta_{n}\left(d\left(\Theta_{n-1}^{-1}(\phi) a\right)\right)\\
&=& \Theta_{n}\left(d\left(\Theta_{n-1}^{-1}(\phi)\right) a\right) + (-1)^{n-1} \Theta_{n}\left(\Theta_{n-1}^{-1}(\phi) \wedge d a\right)\\
&=& \Theta_{n}\left(d\left(\Theta_{n-1}^{-1}(\phi)\right)\right) a + \phi\cdot da =  \nabla(\phi) a + \phi(da),
\end{eqnarray*}
where the second equality follows by the right $A$-linearity of $\Theta_{n-1}$, the third one is a consequence of the graded Leibniz rule, the fourth one follows by the right $A$-linearity of $\Theta_{n}$ and by equation \eqref{linearity}, and the final equality is simply the definition of $\nabla$ in \eqref{nabla.k} and the action \eqref{int.right}. This proves that $\nabla$ is a divergence.

Observe that setting $\omega' = \Theta_k^{-1}(\phi)$ in \eqref{linearity} and then applying $\Theta_{k+m}^{-1}$ one obtains, for all $\omega''\in \Omega^mA$ and $\phi\in \Ii_{n-k}A$,
$$
 \Theta_{k+m}^{-1} (\phi \cdot \omega'') = (-1)^{(n-1)m} \Theta_k^{-1}(\phi) \wedge \omega'' .
 $$
This can be used (in the second equality below) to prove \eqref{hom.ext}. For any $\phi \in  \Ii_{m+1}A$ and $\omega''\in \Omega^mA$, 
\begin{eqnarray*}
\nabla(\phi\cdot \omega'') &=& \Theta_{n}\circ d\circ \Theta_{n-1}^{-1}(\phi\cdot \omega'') = (-1)^{(n-1)m}\Theta_{n}\left( d\left( \Theta_{n-1-m}^{-1}(\phi) \wedge \omega''\right)\right)\\
&=& (-1)^{(n-1)m}\Theta_{n}\left( d\left( \Theta_{n-1-m}^{-1}(\phi)\right) \wedge \omega''\right)\\
&& + (-1)^{(m+1)n -1}\Theta_{n}\left( \Theta_{n-1-m}^{-1}(\phi) \wedge d \omega''\right)\\
&=& \Theta_{n-m}\left( d\left( \Theta_{n-1-m}^{-1}(\phi)\right) \right)\cdot \omega''
 + (-1)^{m+1}\Theta_{n-m-1}\left( \Theta_{n-1-m}^{-1}(\phi)\right) \wedge d \omega'' \\
 &=& \nabla_m(\phi)( \omega'' )+ (-1)^m \phi\cdot d\omega'',
\end{eqnarray*}
where the third equality follows by the Leibniz rule, the fourth one is a consequence of the identities \eqref{linearity} that all the $\Theta_k$ obey and the final equality follows by the definitions of $\nabla_m$ and the action \eqref{int.right}. This proves that each of the $\nabla_k$ defined by \eqref{nabla.k} is an extension of $\nabla$, and thus completes the proof of the theorem.
\end{proof}

\begin{remark}\label{rem.integrating}
A volume form $\omega \in \Omega^n A$ is an  integrating form if and only if it satisfies conditions (4) of Theorem~\ref{thm.integrable}.
\end{remark}

As it stands the integrability of a calculus over an algebra $A$ is a property of the differential graded algebra $\Omega A$ rather than a characterization of $A$ itself. To connect this property of $\Omega A$ to the nature of $A$ we need to relate the dimension of the differential calculus with that of $A$. Since we are dealing with algebras that are deformations of coordinate algebras of affine varieties, the Gelfand-Kirillov dimension seems to be best suited here; see \cite{KraLen:gro} or\cite[Chapter~8]{McCRob:Noe} for a detailed discussion of the Gelfand-Kirillov dimension.

Let $A$ be a finitely generated or affine algebra with generating subspace $\Vv$. Let us write $\Vv(n)$ for the subspace of $A$ spanned by 1 and all words in generators of $A$ of length at most $n$. The algebra $A$ is said to have {\em polynomial growth} if there exist $c\in \RR$ and $\nu\in \NN$ such that $\dim \Vv(n) \leq cn^\nu$ for all sufficiently large $n$. The {\em Gelfand-Kirillov dimension} of $A$ is a real number defined as
\begin{equation}\label{GK}
\gk(A) := \inf \{ \nu\; |\; \dim \Vv(n) \leq n^\nu, \, n \gg 0\},
\end{equation}
if $A$ has polynomial growth and is defined as infinity otherwise. In the case of commutative affine algebras with polynomial growth, the Gelfand-Kirillov dimension coincides with the dimension of the underlying affine space (the Krull dimension of its coordinate algebra). 

\begin{definition}\label{def.smooth.algebra}
An affine algebra with integer Gelfand-Kirillov dimension $n$ is said to be {\em differentially smooth} if it admits an $n$-dimensional connected integrable differential calculus.
\end{definition}

For example, the polynomial algebra $\CC[x_1,\ldots, x_n]$ has the Gelfand-Kirillov dimension $n$ and the usual exterior algebra is  an $n$-dimensional integrable calculus, hence $\CC[x_1,\ldots, x_n]$ is differentially smooth. The results of \cite{BrzElK:int} establish differential smoothness of  coordinate algebras of the quantum group $SU_q(2)$, the  standard quantum Podle\'s sphere and the quantum Manin plane. The following example shows that not all algebras are differentially smooth.

\begin{example}\label{ex.nonsmooth}
The algebra $A = \CC[x,y]/\langle xy\rangle$ is not differentially smooth.
\end{example}

\begin{proof}
Since $xy=yx = 0$, the algebra $A$ has a basis $1, x^n, y^n$, $n=1,2,\ldots$, hence $\gk(A) =1$. Suppose there is a one-dimensional 
connected integrable calculus $\Omega A$ and let $\Theta_1: \Omega^1 A \to {{}^\nu} \! A$,  where  $\nu$ is an algebra automorphism on $A$,  be the required bimodule isomorphism. The Leibniz rule together with the equalities  $xy=yx = 0$ imply that
$$ x\, dy = - dx \, y, \;\;\;\;\; y\, dx = - dy \, x. $$
Apply  $\Theta_1$ to these identities to obtain
\begin{equation}\label{nu.theta}
 \nu(x) \theta_y = - \theta_x y, \;\;\;\; \nu(y) \theta x = - \theta_y x, 
 \end{equation}
where $\theta_x := \Theta_1(dx)$ and $\theta_y :=\Theta_1(dy)$. 

The algebra $A$ admits two types of automorphisms 
$\nu_i:A\to A$, $i=1,2$,
$$
\nu_1(x) = ax, \quad \nu_1(y) = by, \;\;\;\; \hbox{and} \;\;\;\; \nu_2(x) = ay, \quad \nu_2(y) = bx,\qquad a,b\in \CC, \; a,b\neq 0.
$$
For $\nu=\nu_1$, \eqref{nu.theta} read
$$ a x \theta_y = - \theta_x y, \;\;\;\; b y \theta_x = - \theta_y x, $$
with the only solutions given by $ \theta_y = y p(y)$ and $\theta_x = x q(x)$, where $p,q$ are polynomials. 
Hence the image under $\Theta_1$ of any one-form must be a polynomial without a scalar term, so 
$1$ cannot lie in $\Theta_1( \Omega^1 A)$, and therefore $\Theta_1$ is not surjective.
In the case of  the automorphisms $\nu_2$, equations \eqref{nu.theta} come out as 
$$ a y \theta_y = - \theta_x y, \;\;\;\; b x \theta_x = - \theta_y x.$$
Since the algebra is commutative,
$$ y (a \theta_y + \theta_x) = 0 = x (b \theta_x + \theta_y), $$
which can be solved. If $ab \not= 1$ then again both $\theta_x$ and $\theta_y$ are polynomials without any scalar terms and,  by the same arguments as above, $\Theta_1$ is not surjective which contradicts
 the assumption that $\Theta_1$ is an isomorphism of bimodules. 
If $ab=1$, then the only solution is $\theta_y = c$ and $\theta_x = - ac$, for some $c \in \CC$. However,
in this case  $d (a y + x) = 0$,  which contradicts the assumption that the differential calculus 
is connected.

Therefore, there are no one-dimensional connected integrable calculi over $A$, i.e.\hspace{3pt}$A$ is not differentially smooth.
\end{proof}

\subsection{Finitely generated and projective integrable calculi}

Geometrically the most interesting cases of differential calculi are those where $\Omega^kA$ are finitely generated and projective right or left (or both) $A$-modules.
\begin{lemma}\label{lem.projective}
Let $\Omega A$ be an integrable $n$-dimensional calculus over $A$ with integrating form $\omega$. Then $\Omega^k A$ 
is a finitely generated projective right $A$-module if there exist a finite number of forms $\omega_i \in \Omega^kA$ and 
$\bar{\omega}_i \in \Omega^{n-k}A$ such that, for all $\omega'\in \Omega^k A$,
\begin{equation}
\omega'= \sum_i \omega_i \pi_\omega(\bar{\omega}_i \wedge \omega').
 \label{homproj}
\end{equation}
\end{lemma}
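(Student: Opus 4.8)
The plan is to recognise the stated condition \eqref{homproj} as a \emph{dual basis} identity and invoke the Dual Basis Lemma. Recall that this lemma characterises finitely generated projective right modules: a right $A$-module $M$ is finitely generated and projective exactly when there exist finitely many elements $m_i \in M$ together with right $A$-linear functionals $f_i \in \Hom(M,A)$ such that $m = \sum_i m_i\, f_i(m)$ for every $m \in M$. Thus it suffices to extract such a finite dual basis for $M = \Omega^k A$ out of the data $\{\omega_i, \bar{\omega}_i\}$ provided in the hypothesis.

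The forms $\omega_i \in \Omega^k A$ will play the role of the generators $m_i$. For the dual functionals I would set $\phi_i := \pi_\omega \cdot \bar{\omega}_i$, using the right action \eqref{int.right} of $\Omega A$ on $\Ii A$ and the fact that $\pi_\omega \in \Ii_n A$; explicitly,
\begin{equation*}
\phi_i(\omega') = (\pi_\omega \cdot \bar{\omega}_i)(\omega') = \pi_\omega(\bar{\omega}_i \wedge \omega'), \qquad \omega' \in \Omega^k A.
\end{equation*}
Since $\bar{\omega}_i \in \Omega^{n-k}A$ and $\pi_\omega \in \Ii_n A$, a degree count against \eqref{int.right} shows $\phi_i \in \Ii_k A$, the right dual of $\Omega^k A$; in particular each $\phi_i$ is, by its very construction, a right $A$-linear map $\Omega^k A \to A$, so no separate linearity check is strictly needed. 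With these choices the defining relation \eqref{homproj} reads precisely $\omega' = \sum_i \omega_i\, \phi_i(\omega')$ for all $\omega' \in \Omega^k A$, which is the dual basis identity.

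Unwinding the right $A$-linearity of $\phi_i$ is the only computation, and it is immediate: for $a \in A$ one has $\bar{\omega}_i \wedge (\omega' a) = (\bar{\omega}_i \wedge \omega') a$ by associativity of $\wedge$, while $\pi_\omega$ is right $A$-linear by the definition of the volume form in \eqref{vol.iso}, whence $\phi_i(\omega' a) = \phi_i(\omega')\, a$. Having produced the finite dual basis $\{(\omega_i, \phi_i)\}$, the Dual Basis Lemma yields that $\Omega^k A$ is finitely generated and projective as a right $A$-module, completing the argument. I expect no genuine obstacle here: the content is entirely in recognising \eqref{homproj} as a dual basis identity, the point being that integrability—through the isomorphism $\pi_\omega$ and the pairing with complementary-degree forms $\bar{\omega}_i$—is exactly what manufactures the dual functionals for free, so projectivity is reduced to the single finite-generation-type hypothesis \eqref{homproj}.
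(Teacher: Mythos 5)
Your proof is correct and essentially identical to the paper's: both identify \eqref{homproj} as a dual basis identity, define the dual functionals $\phi_i = \pi_\omega \cdot \bar{\omega}_i = \ell^{n-k}_{\pi_\omega}(\bar{\omega}_i)$, and conclude by the Dual Basis Lemma. The only difference is that the paper's proof goes on to establish the converse implication (projectivity implies the existence of such $\omega_i, \bar{\omega}_i$, via $\bar{\omega}_i := \Theta_{n-k}^{-1}(\phi_i)$), but since the lemma as stated asserts only the one direction, your argument is complete.
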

\begin{proof}
The elements $\omega_i$ are generators of $\Omega^kA$, while the equalities
$$
\phi_i (\omega')= \ell^{n-k}_{\pi_\omega}(\bar{\omega}_i) (\omega') = \pi_\omega(\bar{\omega}_i \wedge \omega'),
$$
define $\phi_i \in \hbox{Hom}_A(\Omega^kA, A)$, and then \eqref{homproj} guarantees that $\omega_i, \phi_i$ form a dual basis for $\Omega^kA$, hence  $\Omega^kA$ is projective.

To show the implication in the other direction let us assume that $\Omega^k A$ is finitely generated projective with a
basis $\omega_i$ and the dual basis $\phi_i$. Let us define:
$$ \bar{\omega}_i := \Theta_{n-k}^{-1} (\phi_i). $$
Using the properties of an integrable differential calculus it is easy to show that \eqref{homproj} is satisfied.
\end{proof}

\begin{lemma}\label{lem.integrating}
Let $\Omega A$ be an $n$-dimensional calculus over $A$ admitting a volume form $\omega$. Assume that, for all $k=1,2,\ldots, n-1$, there exist a finite number of forms $\omega_i^k, \bomega_i^k \in \Omega^kA$ such that, for all $\omega'\in \Omega^kA$,
\begin{equation}\label{dual.basis}
\omega' = \sum_i\omega_i^k \pi_\omega(\bomega_i^{n-k} \wedge \omega ') = \sum_i \nu_\omega^{-1}\left(\pi_\omega(\omega'\wedge  \omega_i^{n-k} )\right)\bomega_i^k,
\end{equation}
where $\pi_\omega$ and $\nu_\omega$ are defined by \eqref{vol.iso} and \eqref{vol.auto}, respectively. Then $\omega$ is an integrating form and  all the $\Omega^kA$ are finitely generated and projective as left and right $A$-modules.
\end{lemma}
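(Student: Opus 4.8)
The plan is to establish integrability of $\omega$ through characterization (4) of Theorem~\ref{thm.integrable}, and then to extract the projectivity statements from Lemma~\ref{lem.projective} and its left-handed analogue. By Remark~\ref{rem.integrating} it suffices to show that each left multiplication map $\ell^k_{\pi_\omega}\colon\Omega^k A\to\Ii_{n-k}A$ is bijective for $k=1,\ldots,n-1$; the most economical route is to exhibit an explicit two-sided inverse assembled from the two families occurring in \eqref{dual.basis}.

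Concretely, I would set
$$
\Psi_k\colon\Ii_{n-k}A\longrightarrow\Omega^k A,\qquad \phi\longmapsto \sum_j \nu_\omega^{-1}\!\big(\phi(\omega_j^{n-k})\big)\,\bomega_j^k,
$$
where $\omega_j^{n-k}\in\Omega^{n-k}A$ and $\bomega_j^k\in\Omega^kA$ are the generators supplied by \eqref{dual.basis}. To verify $\ell^k_{\pi_\omega}\circ\Psi_k=\id$, I would take $\rho\in\Omega^{n-k}A$, expand it by the first identity of \eqref{dual.basis} in degree $n-k$, namely $\rho=\sum_j\omega_j^{n-k}\,\pi_\omega(\bomega_j^{k}\wedge\rho)$, apply $\phi$ using its right $A$-linearity, and then migrate each scalar $\phi(\omega_j^{n-k})$ inside $\pi_\omega$ by means of the twisting relation $a\,\pi_\omega(\sigma)=\pi_\omega(\nu_\omega^{-1}(a)\sigma)$, which is a rewriting of \eqref{iso.bil}; the terms then collapse to $\pi_\omega(\Psi_k(\phi)\wedge\rho)$, i.e.\ $\phi=\ell^k_{\pi_\omega}(\Psi_k(\phi))$. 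For the opposite composite I would compute $\Psi_k(\ell^k_{\pi_\omega}(\omega'))=\sum_j\nu_\omega^{-1}\!\big(\pi_\omega(\omega'\wedge\omega_j^{n-k})\big)\,\bomega_j^k$ and recognise the right-hand side as exactly the second identity of \eqref{dual.basis} in degree $k$, which returns $\omega'$. Hence each $\ell^k_{\pi_\omega}$ is bijective, and the implication (4)$\Rightarrow$(1) of Theorem~\ref{thm.integrable} (equivalently Remark~\ref{rem.integrating}) identifies $\omega$ as an integrating form.

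For the module assertions, right projectivity of $\Omega^kA$ for $1\leq k\leq n-1$ is now immediate: the first identity of \eqref{dual.basis} is precisely condition \eqref{homproj} of Lemma~\ref{lem.projective} with $\omega_i=\omega_i^k$ and $\bar\omega_i=\bomega_i^{n-k}$, so $\{\omega_i^k\}$ together with the functionals $\phi_i$ given by $\phi_i(\omega')=\pi_\omega(\bomega_i^{n-k}\wedge\omega')$ form a dual basis. For left projectivity I would run the mirror argument on the second identity of \eqref{dual.basis}: define $\psi_j\colon\Omega^kA\to A$ by $\psi_j(\omega')=\nu_\omega^{-1}\!\big(\pi_\omega(\omega'\wedge\omega_j^{n-k})\big)$, check that $\psi_j$ is left $A$-linear by pushing a left factor $a$ first through $\pi_\omega$ via \eqref{iso.bil} and then through the algebra automorphism $\nu_\omega^{-1}$, and note that the second identity reads $\omega'=\sum_j\psi_j(\omega')\,\bomega_j^k$, exhibiting $\{\bomega_j^k,\psi_j\}$ as a left dual basis. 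The extreme degrees are trivial: $\Omega^0A=A$ is free, and the volume form makes $\Omega^nA$ free of rank one as a left and as a right module.

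All the manipulations are routine bilinear bookkeeping once the two identities of \eqref{dual.basis} are paired with the correct side — the first with surjectivity of $\ell^k_{\pi_\omega}$ and with right projectivity, the second with injectivity and with left projectivity. The one point demanding genuine care, and thus the main obstacle, is keeping track of the twisting automorphism: the inverse $\Psi_k$ must carry a $\nu_\omega^{-1}$, and every step that moves a scalar across $\pi_\omega$ or across $\wedge$ depends on invoking \eqref{iso.bil} with $\nu_\omega$ (rather than $\nu_\omega^{-1}$) in the right place. Getting this direction wrong simultaneously breaks the two inverse identities and the left $A$-linearity of the $\psi_j$, so I would pin down these twist-tracking steps before assembling the rest.
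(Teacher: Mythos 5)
Your proposal is correct and follows essentially the same route as the paper's proof: your $\Psi_k$ is exactly the paper's inverse map $\Phi_k$, both composites are verified using the same ingredients (the twisting relation \eqref{iso.bil}, right $A$-linearity of $\phi$, and the two identities of \eqref{dual.basis}), and the projectivity statements are read off from the same right and left dual bases. The only differences are cosmetic: you run the verification of $\ell^k_{\pi_\omega}\circ\Psi_k=\id$ in the reverse direction and spell out the left $A$-linearity of the functionals $\psi_j$, which the paper leaves implicit.
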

\begin{proof}
Conditions \eqref{dual.basis} imply that $\omega^k_i$, $\pi_\omega(\bomega_i^{n-k} \wedge -)$ form a dual basis for $\Omega^k A$ as a right $A$-module and $\bomega^k_i$, $\nu_\omega^{-1}\left(\pi_\omega(-\wedge  \omega_i^{n-k} )\right)$ form a dual basis for $\Omega^k A$ as a left $A$-module.

For all $k$ define,
$$
\Phi_k : \Ii_{n-k} A \to \Omega^k A, \qquad \phi \mapsto \sum_i \nu_\omega^{-1}\left(\phi(\omega_i^{n-k})\right)\bomega_i^k.
$$
Then, for all $\phi \in \Ii_{n-k}$ and $\omega'\in \Omega^{n-k} A$,
\begin{eqnarray*}
\left(\ell_{\pi_\omega}^k \circ \Phi_k\right)(\phi)(\omega ') &=& \pi_\omega \left(\Phi_k(\phi)\wedge \omega '\right)\\
&=& \sum_i\pi_\omega \left(\nu_\omega^{-1}\left(\phi(\omega_i^{n-k})\right)\bomega_i^k\wedge \omega '\right)\\
&=& \sum_i\phi(\omega_i^{n-k})\pi_\omega \left(\bomega_i^k\wedge \omega '\right) = \sum_i\phi\left(\omega_i^{n-k}\pi_\omega \left(\bomega_i^k\wedge \omega '\right)\right)  = \phi(\omega'),
\end{eqnarray*}
where the first of equations \eqref{iso.bil} was used in the derivation of the third equality, and next the right $A$-linearity of $\phi$  and the first of equations \eqref{dual.basis} were employed. On the other hand, for all $\omega'\in \Omega^k A$,
$$
\left(\Phi_k\circ \ell_{\pi_\omega}^k  \right)(\omega ') = \sum_i\nu_\omega^{-1}\left(\ell_{\pi_\omega}^k(\omega ') (\omega_i^{n-k}) \right)\bomega_i^k\\
= \sum_i \nu_\omega^{-1}\left(\pi_\omega(\omega'\wedge  \omega_i^{n-k} )\right)\bomega_i^k  = \omega',
$$
by the second of equations \eqref{dual.basis}. Therefore, all of the $\ell_{\pi_\omega}^k$ are isomorphisms, and hence $\omega$ is an integrating form.
\end{proof}

In the set-up of Lemma~\ref{lem.integrating}, a combination of the form of the inverse of the $\ell_{\pi_\omega}^k$ together with that of the divergence associated to the isomorphisms $\ell_{\pi_\omega}^k$ via \eqref{nabla.k}, gives the following formula for the divergence:
\begin{equation}\label{hom.con.omega}
\nabla(\phi) = (-1)^{n-1} \sum_i \pi_\omega \left( d\left(\nu_\omega^{-1}\left(\phi(\omega_i^{1})\right)\right)\bomega_i^{n-1}\right),
\end{equation}
for all $\phi \in \Ii_1 A$.

\subsection{A construction of two-dimensional integrable calculi}

In this section we construct a class of two-dimensional integrable calculi. Notwithstanding its quite technical nature and  rather specialized appearance,  the following lemma is applicable in a variety of situations  including, of course, those that are the subject matter of this article.

\begin{lemma}\label{lemma.Poincare}
Assume that:
\begin{blist}
\item $A$ is an algebra and $B\subseteq A$ is a subalgebra, hence  $A$ is a $B$-bimodule in a natural way;
\item $A_+, A_- \subseteq A$ are right $B$-submodules of $A$ such that $A_+A_- = A_-A_+ = B$;
\item  there exists a two-dimensional differential calculus $\Omega B$ over $B$ with $\Omega^1 B = A_+\oplus A_-$ as a right $B$-module and the product in $\Omega^1 B$ given by the formula
$$
(a_+, a_-)\wedge (b_+, b_-) = \omega( \sigma_+(a_+)b_- + \sigma_-(a_-)b_+),
$$
for all $a_\pm,b_\pm \in A_\pm$, where   $\omega$ is a volume form and $\sigma_\pm: A_\pm\to A_\pm$  are invertible linear maps.
\end{blist}
Then $\Omega B$ is integrable.
\end{lemma}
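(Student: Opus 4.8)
The plan is to prove integrability by verifying condition~(4) of Theorem~\ref{thm.integrable}. Since the calculus is two-dimensional we have $n=2$, so the range $k=1,\ldots,n-1$ appearing in that condition collapses to the single value $k=1$. Thus it suffices to show that the given volume form $\omega$ has the property that the left multiplication map
$$
\ell^1_{\pi_\omega}:\Omega^1 B\longrightarrow \Ii_1 B,\qquad \omega'\mapsto \pi_\omega(\omega'\wedge -),
$$
is bijective, where $\pi_\omega$ is defined by \eqref{vol.iso}.

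First I would make $\ell^1_{\pi_\omega}$ explicit. Combining the product formula of hypothesis~(c) with $\pi_\omega(\omega c)=c$, one obtains for $\omega'=(a_+,a_-)$ and $\omega''=(b_+,b_-)$ the value
$$
\ell^1_{\pi_\omega}\big((a_+,a_-)\big)\big((b_+,b_-)\big)=\sigma_+(a_+)b_-+\sigma_-(a_-)b_+ .
$$
Under the identification $\Ii_1 B=\hbox{Hom}_B(A_+,B)\oplus \hbox{Hom}_B(A_-,B)$ induced by $\Omega^1 B=A_+\oplus A_-$, this exhibits $\ell^1_{\pi_\omega}$ as the direct sum of the two natural pairing maps
$$
\mu_-:A_+\to \hbox{Hom}_B(A_-,B),\ c_+\mapsto(b_-\mapsto c_+b_-),\qquad
\mu_+:A_-\to \hbox{Hom}_B(A_+,B),\ c_-\mapsto(b_+\mapsto c_-b_+),
$$
precomposed with $\sigma_+$ and $\sigma_-$ respectively. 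As the $\sigma_\pm$ are bijective, the task reduces to proving that $\mu_+$ and $\mu_-$ are bijective.

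The engine for this is the pair of partitions of unity provided by hypothesis~(b): since $1\in B=A_+A_-=A_-A_+$, there are finite families $p_i\in A_+,\,q_i\in A_-$ and $r_j\in A_-,\,s_j\in A_+$ with $\sum_i p_iq_i=1=\sum_j r_js_j$. A preliminary observation I would record is that the hypotheses already force $A_\pm$ to be $B$-bimodules (indeed sub-bimodules of $A$), since for instance $BA_+=(A_+A_-)A_+=A_+(A_-A_+)=A_+B\subseteq A_+$; this is exactly what keeps the formulas below inside the correct summand. Injectivity of $\mu_-$ is then immediate: if $c_+A_-=0$ then $c_+=\sum_j c_+r_js_j=0$, each $c_+r_j$ lying in $c_+A_-=0$. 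For surjectivity, given a right $B$-linear $\psi:A_-\to B$, I would write $b_-=\sum_j r_j(s_jb_-)$, valid because $\sum_j r_js_j=1$ and $s_jb_-\in A_+A_-=B$, and use right $B$-linearity of $\psi$ to get $\psi(b_-)=\big(\sum_j\psi(r_j)s_j\big)b_-$; hence $z_+:=\sum_j\psi(r_j)s_j\in BA_+\subseteq A_+$ satisfies $\mu_-(z_+)=\psi$. The argument for $\mu_+$ is entirely symmetric, with the roles of the two families exchanged.

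The step I expect to carry the real weight is this surjectivity computation, for it uses the defining relations $A_+A_-=A_-A_+=B$ twice: once to manufacture a partition of unity and once, through the resulting bimodule structure, to guarantee that $z_\pm$ genuinely lands in $A_\pm$; it is also where the one-sided $B$-linearity of the functional must be matched precisely to the one-sided factorization of elements of $A_\mp$. Once $\mu_+$ and $\mu_-$ are bijective, $\ell^1_{\pi_\omega}$ is bijective and Theorem~\ref{thm.integrable} delivers integrability of $\Omega B$. I would finally remark that the same families $\{p_i,q_i\}$ and $\{r_j,s_j\}$ also supply the dual bases required in Lemma~\ref{lem.integrating}, so that as a bonus each $\Omega^k B$ is finitely generated projective as a left and a right $B$-module.
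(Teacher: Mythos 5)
Your proposal is correct and follows essentially the same route as the paper: both reduce, via Theorem~\ref{thm.integrable}, to proving bijectivity of $\ell^1_{\pi_\omega}:\Omega^1 B\to \Ii_1 B$, and both invert it using partition-of-unity elements coming from $A_+A_-=A_-A_+=B$ --- your element $z_+=\sum_j\psi(r_j)s_j$ is exactly the paper's formula $\sigma_+^{-1}\left(\phi(0,s_-^i)s_+^i\right)$ before applying $\sigma_+^{-1}$, with your injectivity/surjectivity split playing the role of the paper's two composite-identity checks for $\Theta^{-1}$. Your explicit observation that $BA_\pm\subseteq A_\pm$ (so the constructed preimages land in the right summand) is a point the paper uses only implicitly, and is a welcome clarification.
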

Before we start proving Lemma~\ref{lemma.Poincare} let us point to the classical motivation behind it. In classical (complex) geometry, Riemann surfaces can be obtained as quotients of the disc (with hyperbolic metric) by Fuchsian groups. As a result, algebras of functions and  modules of (holomorphic or antiholomorphic) sections of the cotangent bundle over such a surface can be embedded in the algebra of functions on the disc and the corresponding modules over it. In particular, (anti-)holomorphic sections over a Riemann surface can be expressed in terms of functions on the disc. In the classical situation $B$ should be thought of as functions on a Riemann surface, $A$ as functions on the disc and $A_\pm$ as (anti-)holomorphic sections on the surface expressed in terms of functions on the disc. 

\proofof{Lemma~\ref{lemma.Poincare}}
In view of Theorem~\ref{thm.integrable} we only need to show that the map
$$
\Theta := \ell_\omega^k: \Omega^1 B \to \Ii_1 B, \qquad  \omega' \mapsto [\omega'' \mapsto \pi_\omega(\omega'\wedge \omega'')],
$$
is bijective.
Since $A_\pm A_\mp = B$, there exist $r_\pm ^i,s_\pm ^i\in A_\pm$ such that
$$
 r_+^i r_-^i = s_-^i s_+^i =1,
$$
where  here and below the repeated index is summed. Let us define a linear map:
\begin{equation}\label{def.theta}
\Theta^{-1}: \Ii_1 B\to \Omega^1 B, \qquad \phi \mapsto \left(\sigma_+^{-1}\left( \phi(0,s_-^i)s_+^i\right), \sigma_-^{-1}\left( \phi(r_+^i,0)r_-^i\right)\right).
\end{equation}
Then $\Theta^{-1}$ is the inverse of  $\Theta$. Indeed, for all $(a_+,a_-)\in \Omega^1 B$,
\begin{eqnarray*}
\Theta^{-1}\circ\Theta (a_+,a_-) &=& \left(\sigma_+^{-1}\left(\pi_\omega\left((a_+,a_-)\wedge (0,s_-^i)\right)s_+^i\right), \sigma_-^{-1}\left(\pi_\omega\left((a_+,a_-)\wedge (r_+^i,0)\right)r_-^i\right)\right)\\
&=& \left(\sigma_+^{-1} \left(\sigma_+  \left( a_+\right)s_-^is_+^i\right), \sigma_-^{-1} \left(\sigma_-  \left( a_-\right)r_+^ir_-^i\right)\right) = (a_+,a_-),
\end{eqnarray*}
and, for all $\phi\in \Ii_1 B$,
\begin{eqnarray*}
\Theta\circ\Theta^{-1}(\phi)(a_+,a_-) &=& \pi_\omega\left(\sigma_+^{-1} \left( \phi(0,s_-^i)s_+^i\right), \sigma_-^{-1} \left( \phi(r_+^i,0)r_-^i\right)\wedge (a_+,a_-)\right)\\
&=& \sigma_+ \left(\sigma_+^{-1}\left( \phi(0,s_-^i)s_+^i\right)\right) a_- + \sigma_-\left(\sigma_-^{-1}\left( \phi(r_+^i,0)r_-^i\right)\right) a_+\\
&=& \phi\left(0,  s_-^is_+^ia_-\right) + \phi\left( r_+^ir_-^ia_+,0\right) = \phi(a_+,a_-),
\end{eqnarray*}
where we used that $s_+^ia_-, r_-^ia_+\in B$ and the fact that $\phi$ is a right $B$-linear map. Now Theorem~\ref{thm.integrable} implies that $\Omega B$ is an integrable differential calculus as claimed.
\endproof

A typical problem to which Lemma~\ref{lemma.Poincare} can be applied is the construction of an integrable differential calculus  over an invariant part of a strongly group-graded algebra. 
Let $G$ be an Abelian group. Recall that an algebra $A$ is called a {\em $G$-graded algebra} if $A= \oplus_{g\in G} A_g$ and, for all $g,h\in G$, $A_gA_h\subseteq A_{g+h}$, and it is said to be {\em strongly-graded} if  $A_gA_h =  A_{g+h}$. In the strongly-graded case, for all $h\in G$, $A_{-h}A_h =  A_hA_{-h} = A_0$, where $A_0$ is the invariant subalgebra, i.e.\hspace{3pt}the subalgebra of all elements of $A$ graded by the neutral element $0\in G$. In this case, one can choose $B=A_0$, $A_+=A_h$, $A_-=A_{-h}$ (for a suitable $h\in G$), and $\sigma_\pm$ to be restrictions of any degree-preserving automorphism of $A$.  For example, Lemma~\ref{lemma.Poincare} provides one with  a proof of  integrability of the two-dimensional differential calculus over the  standard quantum Podle\'s sphere, alternative to that given in \cite[Section~4]{BrzElK:int}. In this case $A$ is the coordinate algebra of $SU_q(2)$, which is strongly graded by the integer group $\ZZ$. The invariant part of $A$ is the coordinate algebra of the quantum standard Podle\'s sphere \cite{Pod:sph}, $h=1$,  and the degree preserving automorphism of $A$ is induced by  the 3D-calculus on $A$.   

\subsection{Integrability and principality}
Strongly graded algebras are examples of principal comodule algebras. Let $H$ be a Hopf algebra with bijective antipode. Recall from \cite{BrzHaj:Che} that a right $H$-comodule algebra $A$ with coaction $\roA: A\to A\ot H$ is called a {\em principal comodule algebra} if the canonical map
$$
\can : A\ot_B A\to A\ot H, \qquad a\ot a'\mapsto a\roA(a'),
$$ is bijective and there exists a right $B$-module and right $H$-comodule splitting of the multiplication map $B\ot A\to A$. Here  $B$ is the coinvariant subalgebra, $B = A^{coH} :=  \{b\in A\; |\; \roA(b) = b\ot 1\}$. Principal comodule algebras play the role of principal fibre bundles in noncommutative geometry and, classically, a quotient of a smooth manifold by a free action of a Lie group is smooth, thus it is natural to expect that if a principal comodule algebra admits an integrable calculus so does its coinvariant algebra. 
In this section we consider a special case of an integrable differential calculus over a principal comodule algebra that induces such a calculus over the coinvariant subalgebra.

Let $A$ be a right $H$-comodule algebra. A differential calculus $\Omega A$ is said to be {\em $H$-covariant} if $\Omega A$ is a right $H$-comodule algebra with the degree-zero coactions $\varrho^{\Omega^k A}: \Omega^kA\to \Omega^kA\ot H$ that commute with the differential, i.e.\hspace{3pt}such that 
$$
\varrho^{\Omega^{k+1} A}\circ d  = (d\ot \id)\circ \varrho^{\Omega^k A}. 
$$
If $B$ is a coinvariant subalgebra of $A$, then the covariant calculus $\Omega A$ restricts to the calculus $\Omega B$ on $B$. Clearly,  $\Omega B$  is contained in the coinvariant part of $\Omega A$, $\Omega B \subseteq \left(\Omega A\right)^{co H}$, but the converse inclusion is not necessarily true.

\begin{lemma}\label{lem.princ}
Let $A$ be a principal $H$-comodule algebra with the coinvariant subalgebra $B$. Let $\Omega A$ be an $H$-covariant $n$-dimensional integrable calculus with integrating form $\omega$. Assume that:
\begin{blist}
\item $\Omega B = \left(\Omega A\right)^{co H}$;
\item $\omega$ is invariant, i.e.\hspace{3pt}$\rho^{\Omega^n A} (\omega) = \omega \ot 1$, and hence $\omega \in \Omega^n B$;
\item for all $k=1,\ldots , n-1$, if $\omega' \in \Omega^kA$ has the property that, for all $\omega'' \in \Omega^{n-k} B$, $\omega'\wedge \omega'' \in \Omega^n B$, then $\omega' \in \Omega^kB$.
\end{blist}
Then $\Omega B$ is an integrable differential calculus and $\omega$ is its integrating form.
\end{lemma}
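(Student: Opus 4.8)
The plan is to verify condition (4) of Theorem~\ref{thm.integrable} for $\Omega B$ with the candidate volume form $\omega$; by Remark~\ref{rem.integrating} this is precisely what it means for $\omega$ to be an integrating form, hence for $\Omega B$ to be integrable. Two things must be checked: that $\omega$ is a volume form on $\Omega B$, and that the left multiplication maps $\ell^k_{\pi_\omega}:\Omega^kB\to\Ii_{n-k}B$ are bijective for $k=1,\dots,n-1$. For the first, I would observe that since $\omega$ is coinvariant by (b) and the coactions are algebra maps commuting with $\wedge$, both the right $A$-module isomorphism $\pi_\omega$ of \eqref{vol.iso} and the automorphism $\nu_\omega$ of \eqref{vol.auto} are $H$-comodule maps. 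Restricting $\pi_\omega$ to coinvariants and using $(\Omega^n A)^{co H}=\Omega^n B$ from (a) together with $A^{co H}=B$ shows that $\pi_\omega$ maps $\Omega^n B$ bijectively onto $B$, so $\omega$ generates $\Omega^n B$ freely as a right $B$-module; since $\nu_\omega$ is a comodule map it restricts to an automorphism of $B$, which yields the left $B$-module isomorphism as well. Thus $\omega$ is a volume form on $\Omega B$.

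The bridge between the $A$-level and the $B$-level is the structure theory of principal comodule algebras. By $H$-covariance each $\Omega^kA$ is a relative Hopf module (a right $A$-module and right $H$-comodule with the evident compatibility), and principality guarantees that $A$ is faithfully flat over $B$ and that every relative Hopf module $M$ is recovered from its coinvariants as $M\cong M^{co H}\ot_B A$ via the action map; see \cite{BrzHaj:Che}. Applied to $M=\Omega^kA$ and using (a), this gives $\Omega^kA\cong\Omega^kB\ot_B A$, and in particular $\Omega^kA=\Omega^kB\cdot A$. This is the only place where principality is genuinely used, and I expect it to be the main obstacle: one must ensure the relevant compatibility and faithful flatness are in force, so that functionals and vanishing statements can be transported across $B\subseteq A$.

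With this in hand the two bijectivity statements follow from integrability of $\Omega A$, which by Theorem~\ref{thm.integrable} supplies the bijectivity of $\ell^k_{\pi_\omega}:\Omega^kA\to\Ii_{n-k}A$. For injectivity, if $\omega'\in\Omega^kB$ satisfies $\pi_\omega(\omega'\wedge\omega'')=0$ for all $\omega''\in\Omega^{n-k}B$, then right $A$-linearity of $\pi_\omega$ together with $\Omega^{n-k}A=\Omega^{n-k}B\cdot A$ forces $\pi_\omega(\omega'\wedge-)$ to vanish on all of $\Omega^{n-k}A$, so $\omega'=0$ by injectivity of the $A$-level map. For surjectivity, given $\psi\in\Ii_{n-k}B$, I would use $\Omega^{n-k}A\cong\Omega^{n-k}B\ot_BA$ to extend $\psi$ to $\tilde\psi\in\Ii_{n-k}A$ by $\tilde\psi(\omega''a)=\psi(\omega'')a$, and pick $\omega'\in\Omega^kA$ with $\ell^k_{\pi_\omega}(\omega')=\tilde\psi$. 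Then for every $\omega''\in\Omega^{n-k}B$ one has $\pi_\omega(\omega'\wedge\omega'')=\psi(\omega'')\in B$, whence $\omega'\wedge\omega''=\omega\,\pi_\omega(\omega'\wedge\omega'')\in\Omega^nB$; assumption (c) now gives $\omega'\in\Omega^kB$, and by construction $\ell^k_{\pi_\omega}(\omega')=\psi$ on $\Omega^{n-k}B$. Assumption (c) thus plays its role precisely here. Having shown that all the $\ell^k_{\pi_\omega}$ over $B$ are bijective, Theorem~\ref{thm.integrable} together with Remark~\ref{rem.integrating} completes the argument.
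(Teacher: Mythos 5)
Your proposal is correct and follows essentially the same route as the paper's proof: the paper likewise extends each $\phi\in\Ii_{n-k}B$ to a functional $\hat\phi\in\Ii_{n-k}A$ (writing the isomorphism $\Omega^{n-k}A\cong\Omega^{n-k}B\ot_B A$ explicitly via the translation map of the principal comodule algebra, where you instead cite the structure theorem for relative Hopf modules), pulls it back through the bijection $\ell^k_{\pi_\omega^A}$, and invokes assumption (c) to conclude that the preimage lies in $\Omega^kB$. The remaining differences are presentational: the paper packages bijectivity of $\ell^k_{\pi_\omega^B}$ as the construction of a two-sided inverse $\Phi_k$ rather than separate injectivity and surjectivity arguments, and it treats the volume-form statement for $\omega$ over $B$ as immediate from (a) and (b), whereas you verify it via the comodule-map property of $\pi_\omega$ and $\nu_\omega$.
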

\begin{proof}
By assumption (b), $\Omega^n B \cong B$ with the isomorphism $\pi_\omega^B : \Omega^nB\to B$ which is the restriction of $\pi_\omega ^A: \Omega^nA\to A$, $\omega a\mapsto a$.

Let us write $\varrho^{\Omega^k A}(\omega ') = \omega'\sw 0 \ot \omega'\sw 1$ and $h\su 1 \ot h \su 2 = \can^{-1}(1\ot h)$, summation understood in both cases. The map $h\mapsto \can^{-1}(1\ot h)$ is known as the {\em translation map}. By the properties of the translation map (see \cite[3.4~Remark]{Sch:rep}), for all $\omega''\in {\Omega^{n-k} A}$, there is an inclusion
$$
\omega''\sw 0 \omega''\sw 1\su 1 \ot \omega''\sw 1\su 2 \in \left(\Omega^{n-k}A\right)^{co H} \ot_B A = \Omega^{n-k}B \ot_B A
$$
where the last equality is a consequence of assumption (a). Therefore, for all $\phi \in \Ii_{n-k} B$, one can define $\hat{\phi} \in \Ii_{n-k} A$ by
\begin{equation}\label{hat.phi}
 \hat{\phi}: \omega'' \mapsto \phi\left(\omega''\sw 0 \omega''\sw 1\su 1\right) \omega''\sw 1\su 2.
\end{equation}
 Note that $\hat{\phi}(\omega'') = \phi(\omega'')$, for all $\omega''\in \Omega^{n-k}B$. Further note that, for all $\omega'\in \Omega^k B$, 
 \begin{equation}\label{hat}
 \widehat{\ell^k_{\pi_\omega^B}(\omega ')} = \ell^k_{\pi_\omega^A}(\omega ').
 \end{equation}
 Indeed, take any $\omega''\in \Omega^{n-k} A$. Then
 \begin{eqnarray*}
  \widehat{\ell^k_{\pi_\omega^B}(\omega ')}(\omega'') &=& \pi_\omega^B\left(\omega'\wedge \omega''\sw 0 \omega''\sw 1\su 1\right) \omega''\sw 1\su 2 = \pi_\omega^A\left(\omega'\wedge \omega''\sw 0 \omega''\sw 1\su 1\right) \omega''\sw 1\su 2\\
  &=& \pi_\omega^A\left(\omega'\wedge \omega''\sw 0 \omega''\sw 1\su 1\omega''\sw 1\su 2\right)  = \pi_\omega^A\left(\omega'\wedge \omega''\right) = \ell^k_{\pi_\omega^A}(\omega ')(\omega ''),
 \end{eqnarray*}
 where the third equality follows by the right $A$-linearity of $\pi_\omega^A$ and the fourth one by the fact that $h\su 1 h\su 2 = \eps(h)$, where $\eps$ is the counit of $H$.

 Take any $\phi \in \Ii_{n-k} B$. By assumption, there exists a unique $\omega'\in \Omega^k A$, such that
 \begin{equation}\label{inverse.om}
 \hat{\phi} = \ell_{\pi_\omega^A }^k(\omega ').
\end{equation}
 Hence, for all $\omega'' \in \Omega^{n-k}B$, 
 \begin{equation}\label{phi.omega}
 \phi(\omega'') = \hat{\phi}(\omega'') = \pi_{\omega}^A (\omega' \wedge \omega''), 
 \end{equation}
 i.e.\hspace{3pt}$\omega' \wedge \omega'' = {\pi_{\omega}^A}^{-1}(\phi(\omega''))$. Since $\phi(\omega'') \in B$, $ {\pi_{\omega}^A}^{-1}(\phi(\omega''))\in \Omega^n B$, and assumption (c) implies that $\omega'\in \Omega^kB$. 
 
 For all $k=1,\ldots , n-1$ define the maps
 $$
 \Phi_k : \Ii_{n-k} B \to \Omega^k B, \qquad \phi \mapsto \omega',
 $$
 where $\omega'$ is given by \eqref{inverse.om}. We claim that $\Phi_k $ is the inverse of $\ell_{\pi_\omega^B }^k$. For any $\phi\in  \Ii_{n-k} B$,
 $$
 \ell_{\pi_\omega^B }^k\left(\Phi_k\left(\phi\right)\right) = \ell_{\pi_\omega^B }^k\left(\omega'\right) = \ell_{\pi_\omega^A }^k\left(\omega'\right)\mid_{\Omega^{n-k}B} = \hat{\phi} \mid_{\Omega^{n-k}B} = \phi.
 $$
 In the converse direction, for all $\omega'\in \Omega^{k} B$,
 $$
 \Phi_k\left( \ell_{\pi_\omega^B }^k\left(\omega'\right)\right) = \omega'' \in \Omega^kB,
 $$
 where 
 $$
 \ell_{\pi_\omega^A }^k\left(\omega''\right) = \widehat{\ell_{\pi_\omega^B }^k\left(\omega'\right)} = \ell_{\pi_\omega^A }^k\left(\omega'\right),
 $$
 by \eqref{hat}. Since the maps $ \ell_{\pi_\omega^A }^k$ are bijective, $\omega'' = \omega'$, as  required.
\end{proof}

\begin{remark}
The assumptions of Lemma~\ref{lem.princ} mean that both $A$ and $B$ have integrable calculi of equal dimensions. 
Geometrically this limits the applicability of Lemma~\ref{lem.princ} to actions of finite quantum groups.
\end{remark}

In view of the construction of the isomorphisms $\Phi_k$ in Lemma~\ref{lem.princ}, the divergence $\nabla_B : \Ii_1\to B$ corresponding to the integrable calculus $\Omega B$ is related to the divergence $\nabla_A: \Ii_1\to A$ by
\begin{equation}\label{nablas}
\nabla_B(\phi) = \nabla_A(\hat{\phi}), \qquad \mbox{for all}\quad \phi \in \Ii_1 B.
\end{equation}
In particular this means that the image of $\nabla_B$ is contained in the image of $\nabla_A$, hence it is contained in the kernel of the integral $\Lambda_A : A\to \coker \nabla_A$. By the universal property of cokernels, there must exist a unique map $\chi: \coker \nabla_B \to \nabla_A$ connecting the integrals on $A$ and $B$ by
\begin{equation}\label{integrals}
\Lambda_A\mid_B = \chi\circ\Lambda_B
\end{equation}
In favourable situations, the formula \eqref{integrals} allows one to integrate on $B$ using integration on $A$.

\section{The noncommutative pillow}
\setcounter{equation}{0}
In this section we study the noncommutative version of one of the prime examples of a {\em good orbifold}, (meaning a singular space obtained by a non-free action of a finite group on a smooth manifold) known as a {\em pillow orbifold}  and obtained as a quotient of the two-torus by an action of the cyclic group $\ZZ_2$.  \cite[Chapter~13]{Thu:geo}. 

\subsection{Two-dimensional integrable differential calculus over the noncommutative pillow}\label{sec.dif}
The coordinate algebra of the noncommutative torus, $\cO(\TT^2_\theta)$, is a complex $*$-algebra generated by unitary $V, W$ subject to the relation
\begin{equation}\label{torus}
VW = \lambda WV, \qquad \mbox{where}\quad \lambda=\exp(2\pi i\theta).
\end{equation} 
We assume that $\theta$ is irrational. The algebra map 
\begin{equation} \label{sigma.auto}
\sigma: \cO(\TT^2_\theta)\to \cO(\TT^2_\theta), \qquad V \mapsto V^* \quad \mbox{and} \quad W\mapsto  W^*,
\end{equation}
 is an involutive automorphism, and hence it splits $\cO(\TT^2_\theta)$ into a direct sum $\cO(\TT^2_\theta)=\cO(\TT^2_\theta)_+\oplus \cO(\TT^2_\theta)_-$, where $a\in \cO(\TT^2_\theta)_\pm$ if and only if $\sigma(a) = \pm a$. This splitting means also that $\cO(\TT^2_\theta)$ is a $\ZZ_2$-graded algebra. The fixed point subalgebra $\opil:= \cO(\TT^2_\theta)_+$ is the coordinate algebra of the {\em noncommutative pillow orbifold} introduced in \cite{BraEll:sph}. It is generated by $x= V+V^*$ and $y=W+W^*$, but it is also convenient to consider $z=VW^* + V^*W$, which can be expressed in terms of $x$ and $y$ via 
\begin{equation}\label{eq.z}
(1-\lambda^2)z = xy - \lambda yx.
\end{equation}
As a vector space $\opil$ is spanned by $1$ and 
\begin{equation}\label{ab}
a_{m,n} = V^mW^n +V^{*m}W^{*n} \quad  b_{m,n} = V^mW^{*n} +V^{*m}W^{n}, \quad m,n\in \NN,\; m+n>0.
\end{equation}
$\opil$ can also be identified with an algebra generated by self-adjoint $x,y,z$ subject to relations \eqref{eq.z} and
\begin{subequations}\label{xy.radial}
\begin{equation}\label{eq.xy}
xz-\bar{\lambda}zx = (1-\bar{\lambda}^2) y, \qquad zy-\bar{\lambda}yz = (1-\bar{\lambda}^2) x,  
\end{equation}
\begin{equation}\label{radial}
x^2 + y^2 +\bar{\lambda}z^2 -xzy =2(1+\bar{\lambda}^2).
\end{equation}
\end{subequations}
It is equation \eqref{radial} that allows one to interpret $\pil$ as a deformation of the pillow orbifold. 
Using the relations (\ref{eq.z})--(\ref{xy.radial}) we can write a linear basis for $\opil$ in the generators $x,y$ as 
$x^k y^l$ and $x^k (y x) y^l$, $k,l\in \NN$. Therefore, the subspace $\Vv(n)$ of words in generators of length at 
most $n$ has a basis 
$$
\{x^k y^l, \, x^i (y x) y^j, \; |\; k+l \leq n, \; i+j \leq n-2\}.
$$ 
Consequently, 
$$
\dim \Vv(n) = {{n+2}\choose{2}} + {{n}\choose{2}} = n^2 + n +1,
$$
so $\opil$ has the Gelfand-Kirillov dimension two. 

We now turn to the description of $\cO(\TT^2_\theta)_-$. Let 
\begin{equation}\label{hats}
\hx = V-V^*, \qquad  \hy=W-W^*, \qquad \mbox{and} \quad  \hz=VW^* - V^*W.
\end{equation}
Clearly, $\hx, \hy, \hz \in \cO(\TT^2_\theta)_-$, and, in fact 
\begin{lemma}\label{lem.a-}
The elements $\hx, \hy, \hz$ generate $\cO(\TT^2_\theta)_-$ as a left (resp.\hspace{3pt}right) $\opil$-module.
\end{lemma}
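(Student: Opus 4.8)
The plan is to reduce the statement to an explicit two–variable recursion on a spanning family of $\cO(\TT^2_\theta)_-$. For $(m,n)\in\ZZ^2$ set $u_{m,n}:=V^mW^n-V^{-m}W^{-n}$ (recall $V^{-1}=V^*$, $W^{-1}=W^*$). Applying the projection $\tfrac12(\id-\sigma)$ to the basis $\{V^mW^n\}$ of $\cO(\TT^2_\theta)$ shows that the $u_{m,n}$ span $\cO(\TT^2_\theta)_-$, with $u_{-m,-n}=-u_{m,n}$ and $u_{0,0}=0$. The three generators are precisely three of these elements: $\hx=u_{1,0}$, $\hy=u_{0,1}$ and $\hz=u_{1,-1}$. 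Since $\opil\cdot\cO(\TT^2_\theta)_-\subseteq\cO(\TT^2_\theta)_-$ (the product of a $+1$- and a $-1$-eigenvector of $\sigma$ is a $-1$-eigenvector), it suffices to prove that every $u_{m,n}$ lies in the left $\opil$-module $M:=\opil\hx+\opil\hy+\opil\hz$.

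First I would record two propagation identities obtained by multiplying $u_{m,n}$ on the left by the generators $x=V+V^*$ and $y=W+W^*$ of $\opil$, using $VW=\lambda WV$ together with its consequences $WV^m=\lambda^{-m}V^mW$ and $W^*V^m=\lambda^{m}V^mW^*$:
\begin{equation*}
x\,u_{m,n}=u_{m+1,n}+u_{m-1,n},\qquad y\,u_{m,n}=\lambda^{-m}u_{m,n+1}+\lambda^{m}u_{m,n-1}.
\end{equation*}
Each is a two–term recursion: the first moves one step horizontally in the lattice, the second one step vertically (up to an explicit power of $\lambda$), and each can be solved for the `new' term $u_{m\pm1,n}$, respectively $u_{m,n\pm1}$, as an $\opil$-multiple of a known $u$ minus one neighbour.

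The propagation then proceeds in four stages, each an ordinary induction. First, from the seeds $u_{0,0}=0$ and $u_{1,0}=\hx$ the horizontal recursion fills the whole row $n=0$, and $u_{-m,0}=-u_{m,0}$ covers the negative half. Second, because $\hz=u_{1,-1}$ provides a second lattice point in the column $m=1$, the vertical recursion with $m=1$ started at the pair $u_{1,0},u_{1,-1}$ fills the entire column $m=1$ in both directions. Third, $u_{0,0}$ and $u_{0,1}=\hy$ fill the column $m=0$. Finally, with both columns $m=0$ and $m=1$ known, the horizontal recursion applied in each fixed row $n$ produces all remaining $u_{m,n}$. Hence every $u_{m,n}\in M$, so $M=\cO(\TT^2_\theta)_-$, which is the left–module claim.

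The main obstacle, and the reason the lemma lists three generators rather than two, is the seeding in the second stage: the vertical recursion relates three consecutive entries $u_{m,n-1},u_{m,n},u_{m,n+1}$, so a single known row (or column) cannot be propagated vertically — one needs two lattice points sharing a column to start, and $\hz$ supplies exactly this second point in the column $m=1$. (One may in fact check directly that $\hz\in\opil\hx+\opil\hy$ for irrational $\theta$, so the third generator is redundant; but retaining it makes the seeding transparent.) The remaining care is purely bookkeeping of the powers of $\lambda$ in the vertical recursion. For the right–module statement I would invoke the $*$-structure: $*$ is a conjugate-linear anti-automorphism preserving both $\opil$ and $\cO(\TT^2_\theta)_-$, with $\hx^*=-\hx$, $\hy^*=-\hy$ and $\hz^*=-\lambda\hz$; applying $*$ to the equality $M=\cO(\TT^2_\theta)_-$ converts left generation by $\hx,\hy,\hz$ into right generation by the same three elements.
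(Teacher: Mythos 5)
Correct, and essentially the paper's own argument: the paper likewise spans $\cO(\TT^2_\theta)_-$ by the monomial differences $\ha_{m,n}=V^mW^n-V^{*m}W^{*n}$ and $\hb_{m,n}=V^mW^{*n}-V^{*m}W^{n}$ (your $u_{m,n}$ and $u_{m,-n}$) and fills the index lattice by a double induction driven by multiplication by $x$ and $y$, seeded by $\ha_{1,1}=x\hy+\hz$. Your variations --- a single family indexed by $\ZZ^2$ instead of two families indexed by $\NN^2$, multiplying by $y$ on the \emph{left} so the twisted recursion $y\,u_{m,n}=\lambda^{-m}u_{m,n+1}+\lambda^{m}u_{m,n-1}$ appears immediately (the paper multiplies on the right and then moves $y$ across $\hx,\hy,\hz$ via the relations \eqref{hat.y}), and deducing the right-module statement from the $*$-structure --- are streamlinings of the same method rather than a different route.
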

\begin{proof}
$\cO(\TT^2_\theta)_-$ is spanned by 
\begin{equation}\label{hab}
\ha_{m,n} = V^mW^n -V^{*m}W^{*n},\quad \hb_{m,n} = V^mW^{*n} -V^{*m}W^{n}, \quad m,n\in \NN,\; m+n>0.
\end{equation}
We prove by induction that $\ha_{m,n}$ are elements of the left $\opil$-module generated by $\hx, \hy, \hz$ (the proof for $\hb_{m,n}$ is similar). First,
$\ha_{1,0} = \hx$, $\ha_{0,1} = \hy$ and
$$
\ha_{1,1} = VW - V^*W^*  = (V+V^*)(W-W^*) - V^*W +VW^* = x\hy + \hz.
$$
Next, we fix an $n$ and assume that
\begin{equation}\label{ind}
\ha_{m,n} = a^x_{m,n}\hx +  a^y_{m,n}\hy + a^z_{m,n}\hz, \qquad a^x_{m,n},  a^y_{m,n}, a^z_{m,n} \in \opil,
\end{equation}
for all $m\leq k$. Then, in view of the unitarity of $V$,
\begin{eqnarray*}
\ha_{k+1, n} &=& V^{k+1}W^n -V^{*k+1} W^{*n} \\
&=& (V+V^*) (V^{k}W^n - V^{*k} W^{*n}) - V^{k-1}W^n +  V^{*k-1} W^{*n}\\
&=& (xa^x_{k,n} - a^x_{k-1,n})\hx +  (xa^y_{k,n} - a^y_{k-1,n})\hy + (xa^z_{k,n} - a^z_{k-1,n})\hz,
\end{eqnarray*}
hence $\ha_{k+1,n}$ is in the module generated by  $\hx, \hy, \hz$. Using the unitarity of $V$ and $W$ and relations \eqref{torus} one finds:
\begin{equation}\label{hat.y}
\hx y = \lambda y\hx + (1-\lambda^2)\hz , \qquad \hy y = y\hy, \qquad  \hz y =\bar{ \lambda} y\hz - (1-\bar{\lambda}^2)\hx .
\end{equation}
Let us fix an $m$ and assume that \eqref{ind} is true for all $n\leq k$. Then, by the unitarity of $W$,
\begin{eqnarray*}
\ha_{m, k+1} &=& V^{m}W^{k+1} -V^{*m} W^{*k+1} \\
&=& (V^{m}W^{k} -V^{*m} W^{*k})(W+W^*) - V^{m}W^{k-1} +V^{*m} W^{*k-1}\\
&=& a^x_{m,k}\hx y +  a^y_{m,k}\hy y + a^z_{m,k}\hz y - (a^x_{m,k-1}\hx +  a^y_{m,k-1}\hy + a^z_{m,k-1}\hz).
\end{eqnarray*}
In view of the relations \eqref{hat.y}, $\ha_{m,k+1}$ is in the left $\opil$-module generated by $\hx, \hy, \hz$.
\end{proof}

We are now ready to construct a connected two-dimensional differential calculus $\Omega(\pil)$ on $\opil$. Set $\Omega^1 (\pil) = \cO(\TT^2_\theta)_-\oplus \cO(\TT^2_\theta)_-$ and $\Omega^ 2 (\pil) = \opil$, $\Omega^n (\pil) =0$, for all $n>2$, and define the product of elements in $\Omega^1 (\pil)$, by
\begin{equation}\label{prod.b}
(a_1, a_2)\wedge (a_3, a_4) = a_1a_4 - a_2a_3\in \opil, \qquad a_k\in \cO(\TT^2_\theta)_-
\end{equation}
The following linear endomorphisms of $\cO(\TT^2_\theta)$, 
\begin{equation}\label{partial}
\partial_V (V^mW^n) = imV^mW^n, \qquad \partial_W (V^mW^n) = inV^mW^n,
\end{equation} 
are derivations i.e.\hspace{3pt}they satisfy the Leibniz rule. The factor $i$ in the above formulae ensures that $\partial_{V,W}\circ * = *\circ \partial_{V,W}$. Furthermore $\partial_{V,W}$ commute among themselves and anticommute with the automorphism $\sigma$ \eqref{sigma.auto}, i.e.\hspace{3pt}
$$
 \partial_V\circ \partial_W = \partial_W\circ \partial_V \quad \mbox{and} \quad \partial_{V,W} \circ \sigma = - \sigma \circ \partial_{V,W}.
$$
All this means that $\partial_{V,W} (\cO(\TT^2_\theta)_\pm) \subseteq \cO(\TT^2_\theta)_\mp$ and that the maps
$$
d: \opil \to \Omega^1 (\pil), \quad a\mapsto (\partial_V(a), \partial_W(a)), 
$$
and
$$
d:  \Omega^1 (\pil)\to \opil, \quad (a_1,a_2)\mapsto \partial_V(a_2) - \partial_W(a_1),
$$
define a complex. The combination of the derivation property of $\partial_{V,W}$ together with the definition of multiplication  in \eqref{prod.b} ensure that the maps $d$ satisfy the graded Leibniz rule. Clearly $\partial_V (V^mW^n) =\partial_W (V^mW^n) = 0$ if and only if $m=n=0$, hence $\Omega(\pil)$ is a connected calculus. With these at hand we can state:

\begin{theorem}\label{thm.pillow.Poincare}
$\Omega(\pil)$ is a connected integrable differential calculus, hence the  noncommutative pillow algebra is differentially smooth.
\end{theorem}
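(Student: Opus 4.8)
The plan is to read off integrability directly from Lemma~\ref{lemma.Poincare}, whose hypotheses were set up for exactly this kind of situation, and then to assemble differential smoothness from ingredients already in place. Concretely, I would put $A = \cO(\TT^2_\theta)$, $B = \opil = \cO(\TT^2_\theta)_+$, and take \emph{both} summands $A_+$ and $A_-$ of hypothesis (b) to be the odd part $\cO(\TT^2_\theta)_-$. Since $\Omega^2(\pil)=\opil$ is free of rank one over itself, the unit $\omega=1$ is a volume form with $\pi_\omega=\id$, so the product \eqref{prod.b} reads $(a_1,a_2)\wedge(a_3,a_4)=\omega\,(a_1a_4-a_2a_3)$. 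Matching this against the product in hypothesis (c) forces $\sigma_+=\id$ and $\sigma_-=-\id$, which are invertible linear maps on $\cO(\TT^2_\theta)_-$ (indeed the restrictions of the degree-preserving automorphisms $\id$ and $\sigma$ of \eqref{sigma.auto}). Hypothesis (a) is immediate, and $A_\pm=\cO(\TT^2_\theta)_-$ are right $B$-submodules because odd times even is odd.

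The only substantial point is the remaining half of hypothesis (b), namely $\cO(\TT^2_\theta)_-\cdot\cO(\TT^2_\theta)_-=\opil$; equivalently, that the $\ZZ_2$-grading of $\cO(\TT^2_\theta)$ by the $\sigma$-eigenspaces is \emph{strong}. I expect this to be the crux. One inclusion is trivial, and since $\cO(\TT^2_\theta)_-\cdot\cO(\TT^2_\theta)_-$ is a $B$-sub-bimodule of $A_+=B$ (multiplying on either side by $B=A_+$ maps the odd part into itself), it suffices to show $1\in\cO(\TT^2_\theta)_-\cdot\cO(\TT^2_\theta)_-$. Here I would invoke simplicity of the quantum torus: for irrational $\theta$ the parameter $\lambda$ of \eqref{torus} is not a root of unity, so $\cO(\TT^2_\theta)$ is a simple algebra, and hence the two-sided ideal generated by the nonzero space $\cO(\TT^2_\theta)_-$ is all of $A$. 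Writing $1=\sum_i p_i m_i q_i$ with $m_i\in\cO(\TT^2_\theta)_-$ and splitting each $p_i,q_i$ into even and odd parts, the even part of the summand $p_i m_i q_i$ is $p_i^+ m_i q_i^- + p_i^- m_i q_i^+$, and both terms lie in $\cO(\TT^2_\theta)_-\cdot\cO(\TT^2_\theta)_-$ (since $A_+A_-\subseteq A_-$ and $A_-A_+\subseteq A_-$). Comparing the even parts of $1=\sum_i p_i m_i q_i$ then yields $1\in\cO(\TT^2_\theta)_-\cdot\cO(\TT^2_\theta)_-$, as required. One could instead attempt an explicit decomposition of $1$ using the generators $\hx,\hy,\hz$ of Lemma~\ref{lem.a-}, but the constant term is awkward to isolate by hand, which is why the structural simplicity argument is preferable.

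With all the hypotheses verified, Lemma~\ref{lemma.Poincare} gives that $\Omega(\pil)$ is integrable. To finish, I would combine this with the facts already established in this section: $\Omega(\pil)$ is two-dimensional and connected (the kernel of $d$ on $\opil$ is $\CC\cdot 1$, since $\partial_V(V^mW^n)=\partial_W(V^mW^n)=0$ forces $m=n=0$), while $\opil$ has Gelfand--Kirillov dimension two. Thus $\opil$ admits a connected integrable differential calculus whose dimension equals its Gelfand--Kirillov dimension, and Definition~\ref{def.smooth.algebra} then gives that the noncommutative pillow is differentially smooth.
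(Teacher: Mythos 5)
There is a genuine gap in your proposal: you never verify that $\Omega(\pil)$ is a differential \emph{calculus} over $\opil$, which is part of hypothesis (c) of Lemma~\ref{lemma.Poincare}. In this paper a ``calculus over $B$'' means, by the conventions of Section~2, that $\Omega^1 B = B\,dB$ and $\Omega^2 B = B\,dB\wedge dB$. The construction preceding the theorem only produces a differential \emph{graded} algebra with $\Omega^1(\pil)=\cO(\TT^2_\theta)_-\oplus\cO(\TT^2_\theta)_-$ and $\Omega^2(\pil)=\opil$; a priori $\opil\, d\opil$ could be a proper sub-bimodule of $\cO(\TT^2_\theta)_-\oplus\cO(\TT^2_\theta)_-$, in which case hypothesis (c) would fail for this choice of $\Omega^1 B$. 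Establishing this density is exactly where the paper's proof spends most of its effort: using Lemma~\ref{lem.a-} (that $\hx,\hy,\hz$ generate $\cO(\TT^2_\theta)_-$ as an $\opil$-module) together with the commutation relations \eqref{hat.y} and \eqref{hat.xz}, one observes $(i\hx,0)=dx$, $(0,i\hy)=dy$ and then expresses the remaining module generators, e.g.
$$
(i\hy,0)=\frac{1}{1-\bar{\lambda}^2}\left(dx\, z-\bar{\lambda}\, z\, dx\right),\qquad
(0,i\hz)=\frac{1}{1-\lambda^2}\left(\lambda\, dy\, x - x\, dy\right),
$$
to conclude $\Omega^1(\pil)=\opil\,d\opil$; degree-two density then follows from $1\in\cO(\TT^2_\theta)_-\cO(\TT^2_\theta)_-$, since $(m,0)\wedge(0,n)=mn$ for odd $m,n$. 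Without this step the appeal to Lemma~\ref{lemma.Poincare} is unjustified, and the final conclusion also fails on its own terms: Definition~\ref{def.smooth.algebra} requires a connected integrable differential \emph{calculus} of dimension two, not merely a DGA of the right shape. Note that once degree-one density is in hand, your own decomposition $1=\sum_i m_i n_i$ with $m_i,n_i$ odd supplies the degree-two statement for free, so the missing piece is precisely $\Omega^1(\pil)=\opil\,d\opil$.

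On the positive side, the part you do prove is correct and proved differently from the paper. Your identification of the data of Lemma~\ref{lemma.Poincare} ($A_+=A_-=\cO(\TT^2_\theta)_-$, $\omega=1$, $\sigma_+=\id$, $\sigma_-=-\id$) matches what the paper uses implicitly, and your argument that the $\ZZ_2$-grading is strong is a genuine alternative: the paper exhibits the explicit identity \eqref{xyz}, $\hx^2+\hy^2-\bar{\lambda}\hz^2-\hx z\hy=2(\bar{\lambda}^2-1)$, whereas you derive $1\in\cO(\TT^2_\theta)_-\cO(\TT^2_\theta)_-$ structurally from simplicity of the algebraic noncommutative torus for irrational $\theta$ (a standard fact, correctly applied; your even-part bookkeeping in $1=\sum_i p_i m_i q_i$ is sound). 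The structural route generalizes more readily but is non-constructive; the paper's explicit identity has the added benefit of being reused later, e.g.\ in the idempotent \eqref{idempotent} of Proposition~\ref{prop.cotan}.
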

\begin{proof}
In order to apply Lemma~\ref{lemma.Poincare} we need to check if $\cO(\TT^2_\theta)_-\cO(\TT^2_\theta)_- = \opil$ and that $\Omega (\pil)$ is a differential calculus (not just a differential graded algebra) over $\opil$. To check the former, observe that
\begin{equation}\label{xyz}
\hat{x}^2 + \hat{y}^2 -\bar{\lambda}\hat{z}^2 - \hat{x}z\hat{y} = 2(\bar{\lambda}^2-1),
\end{equation}
which ensures that $1\in \cO(\TT^2_\theta)_-\cO(\TT^2_\theta)_-$ and, consequently, that $\cO(\TT^2_\theta)_-\cO(\TT^2_\theta)_- = \opil$. Straightforward calculations that use the defining relations of the noncommutative torus lead to the following relations supplementing \eqref{hat.y}
\begin{subequations}\label{hat.xz}
\begin{equation}\label{hat.x}
\hx x = x\hx, \qquad \hy x = \bar{\lambda} x\hy - (\lambda-\bar{\lambda})\hz , \qquad  \hz x ={ \lambda} x\hz + (\lambda-\bar{\lambda})\hy .
\end{equation}
\begin{equation}\label{hat.z}
\hx z = \bar{\lambda} z\hx + (1-\bar{\lambda}^2)\hy ,  \qquad  \hy z ={ \lambda} z\hy - (\lambda-\bar{\lambda})\hx , \qquad \hz z = z\hz .
\end{equation}
\end{subequations}
In view of the definitions  of $\partial_{V,W}$  and $d$ one finds $(i\hx , 0) = dx$ and $(0,i\hy) = dy$. This observation combined with equations \eqref{hat.y} and \eqref{hat.xz} yields
$$
 (i\hy, 0) = \frac{1}{1-\bar{\lambda}^2} (dx z - \bar{\lambda}z dx), \qquad 
(i\hz, 0) =  \frac{1}{1-{\lambda}^2} (dx y - {\lambda}y dx), 
$$
and
$$
(0,i\hx) = \frac{1}{\bar{\lambda}-{\lambda}} (dy z - {\lambda}z dy), \qquad (0,i\hz) = \frac{1}{1-{\lambda}^2} (\lambda dy x - x dy).
$$
Since $\cO(\TT^2_\theta)_-$ is generated by $\hx, \hy, \hz$ and $d$ satisfies the Leibniz rule, this proves that $\Omega^1 (\pil) = \opil d\opil$. Finally,
\begin{eqnarray*}
&&\frac{1}{2(\blambda^2 -1)}\left( (\hx, 0)(0,\hx) + (\hy, 0)(0, \hy) - \lambda(\hz, 0)(0,\hz) - (\hx, 0)(0,z\hy)\right)\\
&& = \frac{1}{2(\blambda^2 -1)}\left(\hat{x}^2 + \hat{y}^2 -\bar{\lambda}\hat{z}^2 - \hat{x}z\hat{y}\right) =1,
\end{eqnarray*}
by \eqref{xyz}. Hence
$$
1\in \Omega^1 (\pil)\wedge \Omega^1 (\pil) = \opil d\opil\wedge  \opil d\opil = \opil d\opil \wedge d\opil,
$$
which implies that $ \opil d \opil \wedge d \opil=\opil=\Omega^2(\pil)$, as required for a differential calculus. Therefore, Lemma~\ref{lemma.Poincare} implies that $\Omega(\pil)$ is an integrable differential calculus (with integrating form 1) over the noncommutative pillow as required.
\end{proof}

Since 
$$
\partial_V(\ha_{m,n}) = ima_{m,n}, \quad \partial_W(\ha_{m,n}) = ina_{m,n}, \quad \partial_V(\hb_{m,n}) = imb_{m,n}, \quad \partial_W(\hb_{m,n}) = -inb_{m,n}, 
$$
where $a_{m,n}$, $b_{m,n}$ are given by \eqref{ab} and $\ha_{m,n}$, $\hb_{m,n}$ are given by \eqref{hab}, the image of the divergence $\nabla$ which obviously coincides with the image of $d: \Omega^1(\pil)\to \opil$, contains all elements of $\opil$ except those that are in the subspace spanned by the identity, i.e. $\opil = \CC \oplus \im\nabla$. Therefore $\coker \nabla =\CC$ and the integral $\Lambda: \opil\to \CC$ comes out as
$$
\Lambda(a) = 
\begin{cases}
\alpha, & \mbox{if $a=\alpha 1$, for  $\alpha \in \CC$} \\
0, & \mbox{otherwise},
\end{cases}
$$
i.e.\hspace{3pt}it is equal to the trace on $\opil$ obtained by the restriction of the trace on the noncommutative torus.

\begin{remark}
Theorem~\ref{thm.pillow.Poincare} can be also proven with the help of Lemma~\ref{lem.princ}. The $\ZZ_2$-action on the noncommutative 
two-torus  can be interpreted as  the following coaction of $\CC \ZZ_2$ on  $\cO(\TT^2_\theta)$, 
$$ 
\varrho(V) = \frac{1}{2} (V\! +\! V^*) \otimes 1 + \frac{1}{2} (V\! -\! V^*) \otimes u, \;\;
\varrho(W) = \frac{1}{2} (W\! +\! W^*) \otimes 1 + \frac{1}{2} (W\! -\! W^*) \otimes u,
$$
where $u$ is the generator of $\ZZ_2$, $u^2=1$. The coinvariant subalgebra coincides with $\cO(\pil)$. Equation \eqref{xyz} assures that this coaction is principal.   Furthermore, it extends to the standard calculus $\Omega(\TT^2_\theta)$ on $\cO(\TT^2_\theta)$ freely generated by two central, anticommuting forms $\omega_V = V^*dV$ and $\omega_W = W^*dW$. One can check that $\Omega(\TT^2_\theta)$ is integrable with a volume form $\omega_V\wedge \omega_W$, invariant under the coaction $\varrho$. Furthermore, one can verify that the invariant part of $\Omega(\TT^2_\theta)$ coincides with the calculus $\Omega(\pil)$ constructed above and that the condition (c) in  Lemma~\ref{lem.princ} is also satisfied.
\end{remark}

\subsection{The noncommutative pillow as a complex manifold}
As before we use the direct sum decomposition $\cO(\TT^2_\theta) = \cO(\TT^2_\theta)_+\oplus \cO(\TT^2_\theta)_-$, with  $\opil=\cO(\TT^2_\theta)_+$, determined by the automorphism \eqref{sigma.auto}.  $(\Omega (\pil), d)$ is the differential calculus constructed in Section~\ref{sec.dif}.

\begin{proposition}\label{prop.cotan}
The module $\cO(\TT^2_\theta)_-$ is a non-free finitely generated projective left (resp.\hspace{3pt}right) module over the non-commutative pillow algebra $\opil$. Consequently, the cotangent bundle over $\pil$ whose sections are given by $\Omega^1(\pil)$ is non-trivial.
\end{proposition}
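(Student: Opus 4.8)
The plan is to split the claim into its two parts: finite generation together with projectivity, which is soft, and non-freeness, which carries the real content.

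For finite generation and projectivity I would reprove nothing. By Theorem~\ref{thm.pillow.Poincare} the calculus $\Omega(\pil)$ is integrable, so by Lemma~\ref{lem.integrating} (or Lemma~\ref{lem.projective}) the module $\Omega^1(\pil)=\cO(\TT^2_\theta)_-\oplus\cO(\TT^2_\theta)_-$ is finitely generated and projective as a left and as a right $\opil$-module; since $\cO(\TT^2_\theta)_-$ is a direct summand of $\Omega^1(\pil)$, it inherits both properties. Alternatively one exhibits a dual basis directly from \eqref{xyz}: writing that identity as $1=\sum_i u_iv_i$ with $u_i,v_i\in\cO(\TT^2_\theta)_-$, the pair $\{u_i\}$, $\{a\mapsto v_ia\}$ is a right dual basis (note $v_ia\in\cO(\TT^2_\theta)_-\cO(\TT^2_\theta)_-=\opil$ and $a=\sum_iu_i(v_ia)$), and $\{v_i\}$, $\{a\mapsto au_i\}$ is a left dual basis. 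The right-module assertions also follow from the left ones by applying the anti-automorphism $*$, which preserves $\opil$ and $\cO(\TT^2_\theta)_-$ and turns left modules into right modules.

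Next I would pin down the rank. Let $\tau$ be the trace on $\opil$ obtained by restricting the unique normalized trace of the noncommutative torus; it coincides with the integral $\Lambda$ of Section~\ref{sec.dif} and satisfies $\tau(1)=1$. For the dual basis above the associated idempotent is $e_{ij}=v_iu_j\in\opil$, and the trace-dimension of $\cO(\TT^2_\theta)_-$ is $\tau(\sum_i v_iu_i)=\tau(\sum_i u_iv_i)=\tau(1)=1$. Hence if $\cO(\TT^2_\theta)_-$ were free it would be free of rank one, i.e.\ isomorphic to $\opil$ as a left module, and the whole problem reduces to showing $\cO(\TT^2_\theta)_-\not\cong\opil$.

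This last step is the main obstacle, and it cannot be settled by a Chern-number computation: because $\cO(\TT^2_\theta)$ is strongly $\ZZ_2$-graded one has $\cO(\TT^2_\theta)_-\otimes_\opil\cO(\TT^2_\theta)_-\cong\opil$ as bimodules, so $[\cO(\TT^2_\theta)_-]$ is $2$-torsion in the Picard group and its trace/Chern number necessarily vanishes, the non-triviality being a torsion phenomenon. The route I would take is to pass to the crossed product $D=\cO(\TT^2_\theta)\rtimes\ZZ_2$ generated by $\cO(\TT^2_\theta)$ and a unitary $u$ with $u^2=1$ and $uau=\sigma(a)$, which is exactly the $\CC\ZZ_2$-coaction of the Remark after Theorem~\ref{thm.pillow.Poincare}. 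With $p_\pm=\tfrac12(1\pm u)$ a short computation gives $p_\pm Dp_\pm\cong\opil$ and $p_-Dp_+\cong\cO(\TT^2_\theta)_-$ as $\opil$-bimodules, with $p_-$ a full projection (since $\cO(\TT^2_\theta)_-\cO(\TT^2_\theta)_-\ni 1$). An isomorphism $\cO(\TT^2_\theta)_-\cong\opil$ of left $\opil$-modules would then force $[p_+]=[p_-]$ in $K_0(D)$. This is false: for the flip action $\cO(\TT^2_\theta)\rtimes\ZZ_2$ has $K_0\cong\ZZ^6$, and the extra generators coming from the four fixed points distinguish $[p_+]$ from $[p_-]$. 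Establishing (or quoting) this $K$-theoretic inequality is the crux; it is the operator-algebraic shadow of the classical fact that the sign orbifold line bundle over the pillow admits no nowhere-vanishing section, odd functions being forced to vanish at the four fixed points of the involution. Once $\cO(\TT^2_\theta)_-$ is non-free, $\Omega^1(\pil)\cong\cO(\TT^2_\theta)_-\oplus\cO(\TT^2_\theta)_-$ is non-free too, yielding the asserted non-triviality of the cotangent bundle.
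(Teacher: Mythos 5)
Your first two steps are fine and essentially parallel the paper: finite generation and projectivity come from the strong $\ZZ_2$-grading, and your explicit dual basis extracted from \eqref{xyz} is exactly the idempotent $\mathbf{e}$ that the paper writes down, while the restricted trace $\Lambda$ pins the putative rank to one. The trouble is the third step, which is where the entire content of the proposition lies, and there your proposal contains both a false claim and an acknowledged gap. The false claim is that non-freeness ``cannot be settled by a Chern-number computation'' because $[\cO(\TT^2_\theta)_-]$ is $2$-torsion in the Picard group. Torsion of the bimodule class in the Picard group does not imply that $[\cO(\TT^2_\theta)_-]-[\opil]$ is torsion in $K_0(\opil)$, and the paper settles the question by precisely a trace computation: besides the restriction of the canonical trace, $\opil$ carries the traces obtained by restricting the $\sigma$-twisted traces $\htau_{ij}$ of \cite{Walters}. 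Any trace extends to matrices, is constant on Murray--von Neumann equivalence classes, and hence induces a homomorphism $K_0(\opil)\to\CC$; the paper computes $\htau_{00}(\tr\mathbf{e})=-1$, whereas a free module of rank $r$ would force the value $r\,\htau_{00}(1)=r\geq 0$. This proves non-freeness and, incidentally, shows that $[\cO(\TT^2_\theta)_-]-[\opil]$ is \emph{not} a torsion class, directly contradicting your heuristic.

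Your own route then stops exactly at the decisive point. The crossed-product setup is sound: with $D=\cO(\TT^2_\theta)\rtimes\ZZ_2$ one does get $p_\pm Dp_\pm\cong\opil$ and $p_-Dp_+\cong\cO(\TT^2_\theta)_-$, the projections are full because the grading is strong, and Morita theory does give the implication that freeness of $\cO(\TT^2_\theta)_-$ would force $[p_+]=[p_-]$ in $K_0(D)$. But the inequality $[p_+]\neq[p_-]$ is merely asserted by appeal to ``the extra generators coming from the four fixed points'', and you yourself flag it as the crux to be ``established or quoted''. Nothing quotable does this automatically: the canonical dual trace on the crossed product gives $\tau(p_+)=\tau(p_-)=\tfrac12$, so the obvious invariant cannot separate the two classes, and locating them inside $K_0\cong\ZZ^6$ requires evaluating the additional (twisted or fixed-point) trace functionals --- that is, exactly the Walters-type computation the paper performs on $\mathbf{e}$. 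There is also a secondary mismatch: the $\ZZ^6$ computation concerns the C*-algebraic crossed product of the irrational rotation algebra, while the proposition is about the polynomial algebra $\opil$, so you would additionally need the (easy, but absent) remark that freeness over $\opil$ implies freeness after completion before any C*-theoretic result can be invoked. As written, the non-freeness, and with it the proposition, remains unproved.
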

\begin{proof}
The fact that $\cO(\TT^2_\theta)_-\cO(\TT^2_\theta)_-=\opil$ means  that $\cO(\TT^2_\theta)_-$ is a finitely 
generated projective left and right $\opil$-module. The formula \eqref{xyz} yields an idempotent
\begin{equation}\label{idempotent}
\mathbf{e} = \frac{1}{2(\bar{\lambda}^2 -1)} 
\begin{pmatrix} \hx \cr \hy \cr \hz \end{pmatrix}
\begin{pmatrix} \hx, & \hy -\hx z, & -\bar\lambda \hz \end{pmatrix},
\end{equation}
which identifies $\cO(\TT^2_\theta)_-$ as a submodule of the free module $\opil^3$. The matrix trace of $\mathbf{e}$ comes out as
$$
\tr \mathbf{e} = 3 - \frac12 (x^2+\lambda z^2) = 1 - \frac12 (a_{2,0}+\lambda^2b_{2,2}),
$$
where $a_{m,n}$, $b_{m,n}$ are given by \eqref{ab}. Therefore, $\Lambda(\tr \mathbf{e}) = 1$. However, since $\Lambda$ is a trivial trace this does not yet guarantee that $\cO(\TT^2_\theta)_-$ is not a free $\cO(\pil)$-module. To show the  non-freeness we need to consider a family of traces on $\opil$, which arise from twisted traces on the algebra of the noncommutative torus. More specifically, if for a given involutive automorphism $\sigma$ of $\cO(\TT^2_\theta)$ there is a functional $\htau$ such that
$$ \htau ( a b) = \htau( \sigma(b) a), \;\;\; \forall a,b \in  \cO(\TT^2_\theta), $$
then $\htau$ restricted to the invariant subalgebra is a trace. For the automorphism \eqref{sigma.auto}, there are four linearly independent twisted traces (in addition to $\Lambda$) \cite{Walters}: 
$$ \htau_{ij} (V^\alpha W^\beta) = e^{\pi i \theta \alpha \beta} \delta_i^{\bar\alpha} \delta_j^{\bar\beta}, \qquad  i,j =0,1,
$$
where $\bar \alpha = \alpha \! \mod \! 2$ and $\bar \beta = \beta \! \mod \! 2$. Evaluating the extension of these traces to the  projector \eqref {idempotent} we obtain the following
results:
$$ \htau_{00}( \tr \mathbf{e}) =  - 1, 
\;\;\; \htau_{01}( \tr \mathbf{e}) = 0, 
\;\;\; \htau_{10}( \tr \mathbf{e}) = 0, \;\;\; 
\htau_{11}(\tr \mathbf{e}) = 0. $$
It is the nontrivial value of $\htau_{00}$ on $ \tr \mathbf{e}$, which shows that the module $\cO(\TT^2_\theta)_-$ determined by $\mathbf{e}$ is non-free. Therefore, the module of sections of the cotangent bundle $\Omega^1(\pil)$, which is given as 
$\cO(\TT^2_\theta)_- \oplus \cO(\TT^2_\theta)_-$, is also non-trivial.
\end{proof}

Following \cite{KhaLan:hol} (cf.\ \cite{BegSmi:com}), by a complex structure on $\opil$ corresponding to the differential calculus $(\Omega (\pil), d)$  we understand the bi-grading decomposition of $\Omega (\pil)$,
$$
\Omega^n(\pil) = \bigoplus_{p+q =n} \Omega^{(p,q)}(\pil),
$$
a $*$-algebra structure on $\Omega (\pil)$ such that $*:\Omega^{(p,q)}(\pil)\to \Omega^{(q,p)}(\pil)$, and the decomposition $d= \delta +\bdelta$ into differentials $\delta: \Omega^{(p,q)}(\pil)\to \Omega^{(p+1,q)}(\pil)$, $\bdelta: \Omega^{(p,q)}(\pil)\to \Omega^{(p,q+1)}(\pil)$ such that 
\begin{equation}\label{partial.bpartial}
\delta(a)^* = \bdelta(a^*), \qquad \mbox{for all $a\in \Omega (\pil)$}.
\end{equation}
As explained in \cite{KhaLan:hol}, up to a conformal factor in a metric, the complex structure associated to the calculus on $\cO(\TT^2_\theta)$ corresponding to derivations $\partial_V$ and $\partial_W$ is determined by the derivations
\begin{equation}\label{partial.tau}
\partial_\tau= \frac{1}{\tau-\btau}(-\btau \partial_V +\partial_W), \qquad \bpartial_\tau= \frac{1}{\tau-\btau}(\tau \partial_V -\partial_W),
\end{equation}
where $\tau \in \CC\setminus\RR$. With the help of these derivations one constructs complex structures for the calculus $\Omega (\pil)$ over the non-commutative pillow manifold as follows: 
$$\Omega_\tau^{(1,0)}(\pil) = \Omega_\tau^{(0,1)}(\pil) =\cO(\TT^2_\theta)_-, \quad \Omega_\tau^{(2,0)}(\pil) = \Omega_\tau^{(0,2)}(\pil) =0, \quad \Omega_\tau^{(1,1)}(\pil) = \opil.
$$
 The $*$-structure on $\cO(\TT^2_\theta)_-$ is that of the $*$-structure of the noncommutative torus, while the  $*$-structure on $\Omega_\tau^{(1,1)}(\pil) $ is opposite to that on $\opil$. The holomorphic and anti-holomorphic differentials are
 $$
 \delta_\tau:= \partial_\tau\mid_\opil: \opil\to \Omega_\tau^{(1,0)}(\pil), \qquad \delta_\tau:= \partial_\tau\mid_{\cO(\TT^2_\theta)_-}:\Omega_\tau^{(0,1)}(\pil)\to \opil,
 $$
 and
 $$
 \bdelta_\tau:= \bpartial_\tau\mid_\opil: \opil\to \Omega_\tau^{(0,1)}(\pil), \qquad \bdelta_\tau:= -\bpartial_\tau\mid_{\cO(\TT^2_\theta)_-}:\Omega_\tau^{(1,0)}(\pil)\to \opil.
 $$
 One can understand this complex structure as the decomposition of the differential $d$ discussed in Section~\ref{sec.dif} as follows. Let us write $(\ha,\hb)_\tau$ for an element of $\Omega_\tau^{(1,0)}(\pil)\oplus \Omega_\tau^{(0,1)}(\pil)$ and $\omega_\tau$, $\bomega_\tau$ for a basis of the direct sum  $\Omega_\tau^{(1,0)}(\pil)\oplus \Omega_\tau^{(0,1)}(\pil)$, so that
 $$
 (\ha,\hb)_\tau = \ha\omega_\tau +\hb\bomega_\tau, \qquad \ha,\hb\in \cO(\TT^2_\theta)_-.
 $$
 Similarly, let us write $(a,b)$ for an element of $\Omega^1 (\pil)$ and $\omega_V$, $\omega_W$ for a basis of the direct sum decomposition into the $\cO(\TT^2_\theta)_-$. In this notation, for all $a\in \opil$,
 $$
  \partial_\tau(a)\omega_\tau + \bpartial_\tau(a)\bomega_\tau = da =   \partial_V(a)\omega_V + \partial_W(a)\omega_W.
  $$
  By comparing coefficients at $\partial_V$ and $\partial_W$ one arrives at the following invertible transformation
  $$
  \begin{pmatrix}
  \omega_\tau \cr \bomega_\tau 
  \end{pmatrix}
  = \frac{1} {\tau-\btau} \begin{pmatrix} 1 & \tau \cr 1 & \btau \end{pmatrix} \begin{pmatrix}
  \omega_V \cr \omega_W
  \end{pmatrix},
$$
i.e.\
$$
(\ha,\hb)_\tau = \frac{1} {\tau-\btau} (\ha +\hb, \tau\ha +\btau\hb).
$$
In particular, this interpretation guarantees that the differential graded algebra determined from $\Omega^{(p,q)}_\tau (\pil)$ 
is a differential calculus over $\opil$.

\subsection{Spectral geometry of the noncommutative pillow}

In the classical situation the quotient of the torus by the action of the cyclic group $\ZZ_2$, which was described 
in Section~\ref{sec.dif}, gives an orbifold with four corners. To see the striking difference between the commutative 
and noncommutative cases it is convenient to use the same language, adapted to describe both commutative and 
noncommutative manifolds. At present, the best candidate for such an approach is the notion of a spectral triple 
\cite{Connes}, which is modelled on the definition of Riemannian spin geometry.

In what follows, we shall briefly recall the construction of a spectral triple for the noncommutative torus (cf.\ \cite{Dab:spi}), 
then we shall find real spectral triples for the noncommutative pillow and see whether the axioms of spectral triples 
allow us to determine whether the latter is a manifold or an orbifold.

Let $\storus$ denote the Fr\'echet algebra of smooth elements of the noncommutative torus, which contains
all series $ \sum_{m,n} a_{mn} V^m W^n $ with $\{a_{mn}\}_{m,n \in \ZZ} \in \CC$ a rapidly decreasing sequence. 
Fix $\epsilon = 0$ or $\epsilon = \frac{1}{2}$. Consider a separable Hilbert space $\cH$ with an orthonormal basis 
$e_{m,n}$,  $m,n \in \ZZ \!+\! \epsilon$ (so there are four possibilities) and the representation $\pi$ of the 
algebra $\storus$ given on the generators of the noncommutative torus as:
$$ \pi(V) e_{m,n} = e^{\pi i \theta n} e_{m+1,n}, \;\;\;\;\;\;\; \pi(W) e_{m,n} = e^{-\pi i \theta m} e_{m,n+1}. $$

\begin{theorem}[\cite{PaSi}]
The following datum $(\storus, \cH \otimes \CC^2, D, \gamma, J)$ gives a real equivariant spectral
triple over the noncommutative torus. Here, the representation of the algebra is taken to be diagonal,
the operators $\gamma$ and $J$ are:
$$ \gamma e_{m,n,\pm} = \pm e_{m,n,\pm},  \;\;\;\;\;\;\; J e_{m,n,\pm} = \mp e_{-m,-n,\mp}, $$
and the Dirac operator $D$ is:
$$ D  e_{m,n,-} = (m + \tau n)  e_{m,n,+}, \;\;\;\;\; D  e_{m,n,+} = (m + \tau^* n)  e_{m,n,-}, $$
for any $\tau$, $|\tau|=1$, with a non-zero imaginary part.
\end{theorem}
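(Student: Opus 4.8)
The plan is to verify, one condition at a time, that $(\storus,\cH\otimes\CC^2,D,\gamma,J)$ satisfies the defining axioms of an even real spectral triple, to confirm equivariance under the evident abelian symmetry, and to read off the KO-dimension from the signs relating $J$, $D$ and $\gamma$. Since the representation is diagonal, $\pi(a)=\pi_0(a)\otimes\id_{\CC^2}$, I would begin by checking that $\pi_0$ is a genuine unitary $*$-representation. A direct computation on the basis gives $\pi_0(V)\pi_0(W)e_{m,n}=e^{\pi i\theta(n+1-m)}e_{m+1,n+1}$ and $\pi_0(W)\pi_0(V)e_{m,n}=e^{\pi i\theta(n-m-1)}e_{m+1,n+1}$, whence $\pi_0(V)\pi_0(W)=e^{2\pi i\theta}\pi_0(W)\pi_0(V)=\lambda\,\pi_0(W)\pi_0(V)$, matching relation~\eqref{torus}; unitarity of $\pi_0(V),\pi_0(W)$ is immediate from the phase factors, and $\pi_0$ extends to the Fr\'echet algebra $\storus$.

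Next I would settle the analytic conditions on $D$. In the ordered basis $(e_{m,n,+},e_{m,n,-})$ the operator $D$ is block diagonal with $2\times2$ blocks $\bigl(\begin{smallmatrix}0 & m+\tau n\\ m+\tau^* n & 0\end{smallmatrix}\bigr)$, which are Hermitian because $\overline{m+\tau n}=m+\tau^* n$; thus $D$ is essentially self-adjoint, with eigenvalues $\pm|m+\tau n|$. As $\tau\notin\RR$, the map $(m,n)\mapsto m+\tau n$ is injective and proper on the lattice $(\ZZ+\epsilon)^2$, so each eigenvalue has finite multiplicity and $|m+\tau n|\to\infty$; hence $(1+D^2)^{-1}$ is compact. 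For the bounded-commutator condition it suffices to test on generators: one finds $[D,\pi(V)]e_{m,n,\pm}=e^{\pi i\theta n}e_{m+1,n,\mp}$, i.e. $[D,\pi(V)]$ equals $\pi(V)$ followed by a constant grading-swap, so it is bounded (similarly for $W$). Boundedness of $[D,\pi(a)]$ for a general smooth $a=\sum a_{mn}V^mW^n$ then follows from the rapid decay of $\{a_{mn}\}$, which is precisely where the passage to $\storus$ enters.

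For the grading, $\gamma$ acts only on $\CC^2$, so $\gamma^2=1$, $\gamma^*=\gamma$, $\gamma$ commutes with the diagonal $\pi$, and anticommutes with the off-diagonal $D$. For the real structure, $J$ permutes the orthonormal basis antilinearly up to signs and is therefore antiunitary; evaluating $J^2$, $JD$, $J\gamma$ on $e_{m,n,\pm}$ yields $J^2=-1$, $JD=+DJ$ and $J\gamma=-\gamma J$, the sign pattern of KO-dimension $2$, as befits a surface. The reality conditions are the structural heart of the proof: I would show that $b\mapsto J\pi(b)^*J^{-1}$ is the \emph{right} regular representation of the noncommutative torus (on generators it shifts the indices $(m,n)$ oppositely to $\pi$, with conjugated phases, and acts as the identity on the $\CC^2$ factor), so it commutes with every left multiplication $\pi(a)$, giving the order-zero condition; since $[D,\pi(a)]$ is again a left multiplication tensored with a constant Clifford matrix, the same commutation yields the first-order condition $[[D,\pi(a)],J\pi(b)J^{-1}]=0$.

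Finally, for equivariance I would introduce the two commuting self-adjoint momentum operators diagonal in the basis, $P_1e_{m,n,\pm}=m\,e_{m,n,\pm}$, $P_2e_{m,n,\pm}=n\,e_{m,n,\pm}$: then $D$ is an off-diagonal linear combination of $P_1,P_2$, so $[D,P_i]=0=[\gamma,P_i]$, while $JP_iJ^{-1}=-P_i$ encodes the antipode, and $[P_1,\pi(V)]=\pi(V)$, $[P_2,\pi(W)]=\pi(W)$ show that $\pi$ is covariant for the dual $\TT^2$-action, establishing equivariance. I expect the main obstacle to be the reality block: carefully tracking the antilinearity of $J$ together with the phases $e^{\pm\pi i\theta n}$ when identifying $J\pi(\cdot)^*J^{-1}$ with the right action, and verifying that $[D,\pi(a)]$ carries no right-multiplication component. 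By contrast, the compact-resolvent and covariance steps are routine once the properness of $(m,n)\mapsto m+\tau n$ and the Schwartz decay are in hand.
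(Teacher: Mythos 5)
Your proposal is correct, but note that the paper itself contains no proof of this statement: the theorem is imported wholesale from \cite{PaSi}, so there is no internal argument to compare against, and a self-contained verification like yours is exactly what is needed. Your route is also essentially the route of the cited reference: the representation check ($\pi(V)\pi(W)=\lambda\,\pi(W)\pi(V)$ with unimodular phases), self-adjointness and compact resolvent of $D$ from the block decomposition with eigenvalues $\pm|m+\tau n|$ and properness of $(m,n)\mapsto m+\tau n$ for non-real $\tau$, boundedness of $[D,\pi(a)]$ from Schwartz decay, the sign table $J^2=-1$, $JD=DJ$, $J\gamma=-\gamma J$ of KO-dimension $2$, the order-zero and first-order conditions via identification of $b\mapsto J\pi(b)^*J^{-1}$ with the commutant (right regular) representation, and equivariance with respect to the number operators $P_1,P_2$, which is precisely the equivariance notion of \cite{PaSi}. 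I verified your individual computations ($[D,\pi(V)]e_{m,n,\pm}=e^{\pi i\theta n}e_{m+1,n,\mp}$, the values of $J^2$, $JD$, $J\gamma$, and the commutation of $J\pi(V)^*J^{-1}$ with $\pi(V),\pi(W)$) and they all hold. One small slip in a parenthetical: $J\pi(V)^*J^{-1}e_{m,n,\pm}=e^{-\pi i\theta n}\,e_{m+1,n,\pm}$, so this operator shifts $m$ in the \emph{same} direction as $\pi(V)$ but with the conjugated phase; it is $J\pi(V)J^{-1}$ (without the adjoint) that shifts oppositely. This misdescription has no effect on the argument, since the only facts used are that $J\pi(b)^*J^{-1}$ is diagonal on the $\CC^2$ factor and commutes with every $\pi(a)$, hence also with every $[D,\pi(a)]$, and those are exactly what your computation establishes.
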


Observe that four different possibilities for the choice of $m,n$ (integer or half-integer) correspond to four different 
spin structures over the classical torus \cite{PaSi}. Next, following the ideas of \cite{Adalgott} we can construct the restriction 
of these spectral triples to the invariant subalgebra of $\storus$ with respect to  the $\ZZ_2$-action determined by 
(the extension of)  the automorphism $\sigma$ \eqref{sigma.auto}. As the first step we lift the action to the Hilbert space 
$\cH \otimes \CC^2$.

\begin{lemma}
For any of the four spin structures and any $\tau$, 
$$ \rho e_{m,n,\pm} = \pm e_{-m,-n,\pm}, $$
is unique (up to sign) lift of the action of $\ZZ_2$  to the Hilbert space  $\cH \otimes \CC^2$, which commutes with $J$, $D$ and $\gamma$ and implements the action 
of $\sigma$ \eqref{sigma.auto} on the algebra $\storus$.
\end{lemma}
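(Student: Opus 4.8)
The plan is to treat the statement as two separate claims: that the displayed operator $\rho$ is a lift of the $\ZZ_2$-action enjoying all the required compatibilities, and that any such lift agrees with it up to an overall sign. Both parts are handled by direct computation on the orthonormal basis $e_{m,n,\pm}$, supplemented by a short commutant argument for uniqueness. Throughout, a \emph{lift} means a unitary linear operator $\rho$ on $\cH\otimes\CC^2$ that commutes with $D$, $\gamma$ and $J$ and that implements $\sigma$ by conjugation, $\rho\,\pi(a)\,\rho^{-1}=\pi(\sigma(a))$ for all $a\in\storus$; the $\ZZ_2$-structure is encoded in $\rho^2=\id$, which for the displayed operator is immediate since applying $\rho$ twice restores $(m,n)$ and squares the chirality sign.

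First I would verify that $\rho$ implements $\sigma$ \eqref{sigma.auto}. Since $\sigma$ is an algebra automorphism and $\pi$ a representation, it suffices to check the two generators. The essential point is that conjugation by $\rho$ reverses the lattice shift and conjugates the Heisenberg phase: because $\rho$ sends $e_{m,n,\pm}$ to a sign times $e_{-m,-n,\pm}$, the raising operator $\pi(V)\colon e_{m,n}\mapsto e^{\pi i\theta n}e_{m+1,n}$ is carried to the lowering operator $e_{m,n}\mapsto e^{-\pi i\theta n}e_{m-1,n}=\pi(V^*)e_{m,n}$, and likewise $\pi(W)\mapsto\pi(W^*)$; the chirality signs carried by $\rho$ cancel in the conjugation because $\pi$ acts diagonally on the $\CC^2$ factor. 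This gives $\rho\pi(V)\rho^{-1}=\pi(V^*)=\pi(\sigma(V))$ and the analogous identity for $W$, hence $\rho$ implements $\sigma$ on all of $\storus$.

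Next I would check compatibility with the three even operators, again on basis vectors. Commutation with $\gamma$ is automatic because $\rho$ preserves chirality. For $D$ the key observation is that the eigenvalue symbol obeys $m+\tau n\mapsto -(m+\tau n)$ under the antipodal flip $(m,n)\mapsto(-m,-n)$, and this sign is exactly compensated by the relative sign $\rho$ assigns to the $\gamma=-1$ subspace, so that $D\rho=\rho D$. Compatibility with $J$ is the one step demanding care: since $J$ is antilinear one must track the interplay of the antipodal map already present in $J$ with the one in $\rho$ together with the chirality flip performed by $J$, which reduces to a bookkeeping of four sign factors on $e_{m,n,\pm}$. I expect this $J$-compatibility, and in particular making the sign conventions of $J$, $\gamma$ and $\rho$ mutually consistent, to be the most error-prone part of the argument.

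Finally, for uniqueness I would use irreducibility. For irrational $\theta$ the representation of $\storus$ on $\cH$ is irreducible, so on $\cH\otimes\CC^2$ (where $\pi$ acts as $\pi_0\otimes\id$) the commutant $\pi(\storus)'$ is $\id\otimes M_2(\CC)$. If $\rho'$ is a second lift then $\rho^{-1}\rho'$ lies in this commutant, hence equals $\id\otimes M$ for some $M\in M_2(\CC)$; commutation with $\gamma$ forces $M$ diagonal, commutation with $D$ (whose restriction to each mode mixes the two chiralities, with coefficients nonzero since $\tau\notin\RR$) forces the two diagonal entries equal, so $M$ is scalar, and unitarity together with the requirement that $\rho'$ be an involution (equivalently, compatibility with the antilinear $J$) forces that scalar to be $\pm1$. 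This yields $\rho'=\pm\rho$. The remaining subtlety is to run the argument uniformly over the four choices $\epsilon\in\{0,\tfrac12\}$ in each slot, which alters the index set $m,n\in\ZZ+\epsilon$ but affects neither the irreducibility nor the commutant computation.
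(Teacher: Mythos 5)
Your verifications for $\gamma$, $D$ and the implementation of $\sigma$ are correct as far as they go, but the proposal has genuine gaps at exactly the two places where you deferred the work. First, the $J$-compatibility that you only ``expect'' to hold is where the argument actually breaks: with the stated conventions $J e_{m,n,\pm} = \mp e_{-m,-n,\mp}$ and $\rho e_{m,n,\pm} = \pm e_{-m,-n,\pm}$, one computes
$\rho J e_{m,n,+} = \rho(-e_{-m,-n,-}) = +\,e_{m,n,-}$ while $J\rho e_{m,n,+} = J(e_{-m,-n,+}) = -\,e_{m,n,-}$,
and similarly on the other chirality, so $\rho J = -J\rho$: the displayed operator \emph{anticommutes} with $J$. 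The phase-corrected operator $e_{m,n,\pm}\mapsto \pm i\, e_{-m,-n,\pm}$ does commute with $J$ (the antilinearity of $J$ conjugates the $i$), but it squares to $-\mathrm{id}$, hence is only a projective lift of $\ZZ_2$. This sign/phase tension is precisely the nontrivial content of the equivariance result (Theorem 2.7 of \cite{Adalgott}) that the paper invokes for its one-line proof; it cannot be dismissed as bookkeeping, and any honest write-up must either adjust the phase of $\rho$, weaken the $J$-condition to commutation up to sign, or fix the conventions of $J$.

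Second, your uniqueness argument rests on a false premise. For irrational $\theta$ the representation of $\storus$ on $\cH$ is \emph{not} irreducible: it is (up to phases in the basis) the GNS representation of the canonical trace, whose weak closure $M$ is a type ${\rm II}_1$ factor with a large commutant; for instance $\pi'(V)e_{m,n} = e^{-\pi i\theta n}e_{m+1,n}$ and $\pi'(W)e_{m,n} = e^{\pi i\theta m}e_{m,n+1}$ commute with $\pi(V)$ and $\pi(W)$. Hence the commutant of $\pi(\storus)$ on $\cH\otimes\CC^2$ is $M'\otimes M_2(\CC)$, not $\mathrm{id}\otimes M_2(\CC)$, and your reduction of $\rho^{-1}\rho'$ to a $2\times 2$ matrix collapses. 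The argument can be repaired, but only by using $J$ at the beginning rather than at the end: since $u=\rho^{-1}\rho'$ commutes with $J$ and with $\pi(\storus)$, it also commutes with $J\pi(\storus)J^{-1}$, and by Tomita--Takesaki theory (the GNS representation of the trace is in standard form, the flip $e_{m,n}\mapsto e_{-m,-n}$ with conjugation being the modular conjugation) this generates the commutant $M'$; thus $u$ commutes with $M\vee M'$, which is all bounded operators on the first tensor factor, so $u\in \mathrm{id}\otimes M_2(\CC)$, after which your $\gamma$-, $D$- and reality-steps do correctly force $u=\pm\mathrm{id}$. Note finally that the paper itself offers no computation to compare against---it deduces the lemma directly from Theorem 2.7 of \cite{Adalgott}---so your self-contained route is a legitimately different strategy, but it is not a proof until both of the above gaps are closed.
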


This lemma is a direct consequence of the application of Theorem 2.7 of \cite{Adalgott} when one restricts to a subalgebra
of the noncommutative torus.  As a consequence, we have:

\begin{corollary}
Each of the spin structures over the noncommutative torus restricts to an irreducible real spectral triple over the algebra of the 
noncommutative pillow by taking $\spil = \storus_+, (\cH \otimes \CC^2)_+, D, \gamma, J$, where
$\storus_+$ is the invariant subalgebra of (the extension of)  the automorphism $\sigma$ \eqref{sigma.auto},  $(\cH \otimes \CC^2)_+$ is the invariant part of the Hilbert space (the eigenspace of
$\rho$ with eigenvalue $1$) and $D,J,\gamma$ are restrictions of the operators to that space.
\end{corollary}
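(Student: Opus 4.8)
The plan is to construct the restricted datum directly, by corestricting every operator of the torus spectral triple to the $+1$-eigenspace $(\cH\otimes\CC^2)_+$ of the lift $\rho$ supplied by the preceding Lemma, and then to verify three things in order: that this corestriction is well defined, that it inherits all the axioms of a real spectral triple, and — the genuinely substantive point — that it is \emph{irreducible}.

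First I would record that the corestriction makes sense. Because $\rho$ implements $\sigma$, an element $a\in\storus_+$ satisfies $\rho\,\pi(a)\,\rho^{-1}=\pi(\sigma(a))=\pi(a)$, so $\pi(a)$ commutes with the involution $\rho$ and hence preserves each of its eigenspaces; in particular $\pi(\storus_+)$ acts on $(\cH\otimes\CC^2)_+$. By the Lemma the operators $D$, $\gamma$ and $J$ all commute with $\rho$, and since $\rho^2=1$ this forces them to preserve $(\cH\otimes\CC^2)_+$ as well (for the antilinear $J$ one uses $\rho J v = J\rho v = Jv$ whenever $\rho v=v$). Thus $\pi|_+,\,D|_+,\,\gamma|_+,\,J|_+$ are all legitimately defined.

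Next comes the verification of the axioms, which I expect to be essentially automatic. Every relation defining a real spectral triple — self-adjointness of $D$, boundedness of $[D,\pi(a)]$ for $a\in\storus_+$, the grading identities $\gamma^2=1$, $\gamma=\gamma^*$, $\gamma D=-D\gamma$, $\gamma\pi(a)=\pi(a)\gamma$, and the sign conventions fixing the KO-dimension (for the two-torus, $J^2=-1$, $JD=DJ$, $J\gamma=-\gamma J$), together with the order-zero and order-one conditions — is an operator identity or a functional-analytic property that already holds on all of $\cH\otimes\CC^2$. Since $(\cH\otimes\CC^2)_+$ is a closed subspace invariant under every operator involved, each such identity restricts verbatim and the compact-resolvent property of $D$ passes to its corestriction; hence the reality structure and its KO-dimension are inherited unchanged.

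The main obstacle is irreducibility. Here I would exploit the algebraic identity $\storus_-\,\storus_-=\storus_+$ (the smooth analogue of \eqref{xyz}) together with the observation that odd elements $a\in\storus_-$ satisfy $\rho\,\pi(a)\,\rho^{-1}=-\pi(a)$ and hence interchange $(\cH\otimes\CC^2)_+$ and $(\cH\otimes\CC^2)_-$. Given a bounded operator $T$ on $(\cH\otimes\CC^2)_+$ commuting with $\pi(\storus_+)|_+$, $D|_+$, $\gamma|_+$ and $J|_+$, the strategy is to transport $T$ across these odd intertwiners to a bounded operator $\widetilde T$ on all of $\cH\otimes\CC^2$; the relation $\storus_-\storus_-=\storus_+$ is exactly what guarantees that the transported pieces glue consistently and that $\widetilde T$ commutes with the full torus algebra $\pi(\storus)$ and with $D$. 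Irreducibility of the torus spectral triple then forces $\widetilde T\in\CC\,\id$, whence $T$ is scalar. The delicate points, on which I would spend the most care, are the well-definedness of $\widetilde T$ (independence of the odd element used to move between the two eigenspaces) and the passage of $J$-equivariance through this construction; alternatively one may argue concretely by noting that, since $\tau$ has nonzero imaginary part, $D$ separates the symmetrized basis vectors of $(\cH\otimes\CC^2)_+$ finely enough that, combined with the index-shifting action of the generators $x,y$ of $\storus_+$, no proper closed invariant subspace can survive.
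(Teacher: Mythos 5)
Your first two steps (well-definedness of the corestriction to the $+1$-eigenspace of $\rho$, and the verbatim inheritance of all the axioms there) are correct, and they flesh out exactly what the paper leaves implicit: the paper offers no detailed proof, presenting the Corollary as an immediate consequence of the preceding Lemma together with the restriction machinery of Theorem 2.7 of \cite{Adalgott}. The problem is that what that imported machinery actually carries is the one substantive claim in the statement — \emph{irreducibility} — and this is precisely where your self-contained argument has a genuine gap.

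Your transport $\widetilde T=\sum_i\pi(a_i)T\pi(b_i)$ on $(\cH\ot\CC^2)_-$, built from $1=\sum_i a_ib_i$ with $a_i,b_i\in\storus_-$, is indeed well defined (for odd $c$ one checks $\pi(c)\widetilde T=T\pi(c)$, and the common kernel of $\pi(\storus_-)$ is zero) and it commutes with all of $\pi(\storus)$ and with $\gamma$. But the two points you defer as ``delicate'' are not technicalities; they are where the argument breaks. Pushing $D$ through $\widetilde T$ produces the correction terms $\sum_i[D,\pi(a_i)]T\pi(b_i)+\sum_i\pi(a_i)T[D,\pi(b_i)]$, and cancelling these against the Leibniz identity $\sum_i[D,\pi(a_i)]\pi(b_i)+\sum_i\pi(a_i)[D,\pi(b_i)]=0$ would require moving $T$ past the \emph{odd} one-forms $[D,\pi(b_i)]$ — a commutation that is not among your hypotheses and does not even typecheck, since those operators map $(\cH\ot\CC^2)_-$ into $(\cH\ot\CC^2)_+$ while $T$ lives on the $+$ space. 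Similarly, conjugation by $J$ turns left multiplication by an odd element into right multiplication by an odd element (order-zero condition), whereas $T$ is only known to commute with right multiplications by \emph{even} elements, namely $J\pi(\storus_+)J^{-1}$ restricted to the $+$ space. So the step ``$\widetilde T$ commutes with $D$ and $J$, hence is scalar by irreducibility of the torus triple'' is never reached; in fact, were the restriction reducible, the witness would be exactly an operator whose transport fails to commute with $D$ or $J$, so the skipped step is essentially equivalent to the assertion being proved. The missing input is analytic, not algebraic: for irrational $\theta$ the flip $\sigma$ is an outer automorphism of the weak closure of $\storus$ (the hyperfinite type ${\rm II}_1$ factor), so the fixed-point von Neumann algebra is again a factor and the commutant computation for the torus triple can be repeated on $(\cH\ot\CC^2)_+$; this (or the equivariant-restriction theorem of \cite{Adalgott}, which the paper cites) is what must replace the purely algebraic relation $\storus_-\storus_-=\storus_+$. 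Your fallback sketch — that $D|_+$ together with the index-shifting action of $x,y$ leaves no invariant subspace — is likewise not yet a proof; note for instance that for the spin structure $\epsilon=0$ the operator $D$ annihilates $e_{0,0,+}\in(\cH\ot\CC^2)_+$, so any such argument must treat the kernel of $D$ explicitly.
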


So far we have constructed and classified all real spectral triples over the noncommutative pillow, which are restrictions of equivariant
real spectral triples over the noncommutative torus. Such a construction guarantees that almost all conditions, which are satisfied for a spectral
triple over the torus (such as finiteness and the behaviour of the spectrum of the Dirac operator, etc.) are automatically satisfied. There exists
however one exception, which is the crucial element making it possible to distinguish manifolds from orbifolds. This is the orientability condition
(see \cite{ReVa}), which is satisfied for all commutative spectral geometries and for the noncommutative torus. The original condition requires the existence of a Hochschild cycle, so that its image under $\pi$ gives the chirality operator $\gamma$ (in the even-dimensional case). For the noncommutative pillow we can establish a milder version of the orientability condition, by constructing explicitly a cycle
(though not a Hochschild cycle) whose image is the chirality operator $\gamma$.
\begin{theorem}
For any value of the conformal structure $\tau$ with the non-zero imaginary part, $|\tau|=1$, and any value of the deformation parameter
$\lambda = e^{2 \pi i \theta}$,  provided that $\lambda^4 \not= 1$, there exists a cycle 
$\omega = \sum_i a_i \ot b_i \ot c_i \in \spil \ot \spil \ot \spil$, 
such that:
 $$\sum \pi(a_i)[D,\pi(b_i)][D,\pi(c_i)] = \gamma .$$ 
\end{theorem}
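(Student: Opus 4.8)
The plan is to turn the operator identity into two purely algebraic identities in $\storus$, to solve them with the help of the volume relation \eqref{xyz}, and only then to check that the resulting tensor is a cyclic cycle. First I would compute the Dirac one--forms $[D,\pi(b)]$ for $b\in\spil$. Writing $\hat m,\hat n$ for the number operators $\hat m\,e_{m,n,\pm}=m\,e_{m,n,\pm}$, $\hat n\,e_{m,n,\pm}=n\,e_{m,n,\pm}$, the Dirac operator is $D=\left(\begin{smallmatrix}0 & \hat m+\tau\hat n\\ \hat m+\tau^*\hat n & 0\end{smallmatrix}\right)$ in the $\pm$ grading, and since $\pi(b)$ is diagonal, $[D,\pi(b)]=\left(\begin{smallmatrix}0 & [\hat m+\tau\hat n,\pi(b)]\\ [\hat m+\tau^*\hat n,\pi(b)] & 0\end{smallmatrix}\right)$. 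Because $[\hat m,\pi(b)]=-i\,\pi(\partial_V b)$ and $[\hat n,\pi(b)]=-i\,\pi(\partial_W b)$ for the derivations \eqref{partial}, this equals $-i\left(\begin{smallmatrix}0 & \partial_+ b\\ \partial_- b & 0\end{smallmatrix}\right)$, where $\partial_\pm:=\partial_V+\tau^{\pm}\partial_W$ (with $\tau^+=\tau$, $\tau^-=\tau^*$); on generators this reads $[D,\pi(x)]=\hx\ot\left(\begin{smallmatrix}0&1\\1&0\end{smallmatrix}\right)$ and $[D,\pi(y)]=\hy\ot\left(\begin{smallmatrix}0&\tau\\\tau^*&0\end{smallmatrix}\right)$. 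Multiplying two such one--forms collapses the matrix part to a diagonal, so $\pi(a)[D,\pi(b)][D,\pi(c)]=-\,\mathrm{diag}\bigl(\pi(a\,\partial_+b\,\partial_-c),\,\pi(a\,\partial_-b\,\partial_+c)\bigr)$. As $\gamma=\mathrm{diag}(1,-1)$, the theorem reduces to the two algebraic identities $\sum_i a_i\,\partial_+b_i\,\partial_-c_i=-1$ and $\sum_i a_i\,\partial_-b_i\,\partial_+c_i=+1$ in $\storus$, the second being the image of the first under the conjugation $\tau\leftrightarrow\tau^*$.

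Next I would solve these two identities simultaneously, the scalar on each right--hand side being produced precisely by \eqref{xyz}. To place every even factor on the left I would first use \eqref{hat.z} to rewrite $\hx z\hy=\bar\lambda\,z\,\hx\hy+(1-\bar\lambda^2)\hy^2$, so that \eqref{xyz} becomes $\hx^2+\bar\lambda^2\hy^2-\bar\lambda\hz^2-\bar\lambda\,z\,\hx\hy=2(\bar\lambda^2-1)$. Since $\partial_+x=\partial_-x=i\hx$, $\partial_\pm y=i\tau^{\pm}\hy$, $\partial_\pm z=i(1-\tau^{\pm})\hz$ and $\partial_+x\,\partial_-y=-\tau^*\hx\hy$, each monomial $\hx^2,\hy^2,\hz^2,z\,\hx\hy$ is realized as $\partial_+(\cdot)\,\partial_-(\cdot)$ (and, under $\tau\leftrightarrow\tau^*$, as $\partial_-(\cdot)\,\partial_+(\cdot)$) of the generators up to nonzero scalars. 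Lifting the four terms of the rewritten \eqref{xyz} yields a tensor with $a_i\in\CC\cup\CC z$ and $b_i,c_i\in\{x,y,z\}$ realizing the first identity; the conjugate identity is then matched by adjoining off--diagonal correction terms in the pairs $(x,y),(y,x),(x,z),(z,x),(y,z),(z,y)$ whose symmetric parts are arranged to cancel, so that the $(+,+)$ and $(-,-)$ blocks come out as $-1$ and $+1$. The correction coefficients solve a finite linear system whose nondegeneracy rests on the hypothesis $\lambda^4\neq1$: the denominator $\bar\lambda^2-1$ and a second factor dividing $1-\lambda^4$ must both be invertible.

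Finally I would verify that the tensor $\omega=\sum_i a_i\ot b_i\ot c_i$ so obtained is a cycle in Connes' cyclic complex $C^\lambda_\bullet(\spil)$, i.e.\ that its Hochschild boundary $\mathrm{b}\omega$ lies in the image of $1-t$ (so that $[\omega]$ is $\mathrm{b}$--closed modulo cyclic permutations), while $\mathrm{b}\omega\neq0$ shows that $\omega$ is \emph{not} a Hochschild cycle. Concretely I would expand $\mathrm{b}\omega$ in the basis \eqref{ab} and match it against $1-t$ applied to explicit $1$--chains: the obstruction to $\mathrm{b}\omega$ vanishing outright is the noncommutativity of $\storus$, and the coefficients that appear are governed by the same $\lambda$--factors that had to be inverted to produce $\omega$, all nonzero precisely when $\lambda^4\neq1$, yet assembling into a genuine $(1-t)$--coboundary.

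I expect the main obstacle to be this last reconciliation: exhibiting a single tensor that simultaneously maps to the intrinsically \emph{antisymmetric} chirality $\gamma$ and is cyclically closed. The orientation datum lives in the antisymmetric part of the $(+,+)$/$(-,-)$ blocks, encoded by $\sum_i a_i(\partial_+b_i\,\partial_-c_i-\partial_-b_i\,\partial_+c_i)=-2$, whereas the volume relation \eqref{xyz} naturally supplies only the \emph{symmetric} combinations $\hx^2,\hy^2,\hz^2$ (which contribute equally to both blocks). Forcing the antisymmetric equation to hold with a scalar right--hand side while the symmetric one vanishes, keeping track of the reorderings $\hx\hy$ versus $\hy\hx$ and of the $\tau$-- and $\lambda$--weightings, and at the same time preserving the cyclic--cycle property, is the delicate point; the algebra is rigid enough that the coupled system admits a solution only under the stated condition $\lambda^4\neq1$.
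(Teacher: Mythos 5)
Your opening reduction is correct, and it is in substance the computational skeleton behind the paper's verification: since the representation is diagonal, $[D,\pi(b)]$ is off-diagonal with entries $-i\pi(\partial_\pm b)$, $\partial_\pm=\partial_V+\tau^{\pm1}\partial_W$, and the assertion is equivalent to the pair of identities $\sum_i a_i\,\partial_+b_i\,\partial_-c_i=-1$ and $\sum_i a_i\,\partial_-b_i\,\partial_+c_i=+1$ in $\storus$. From that point on, however, your proposal asserts the theorem rather than proving it. The paper's entire proof consists of \emph{exhibiting} a solution: an explicit combination of eight products of the shape $x[D,y][D,z]$, $[D,x]y[D,z]$, $[D,z][D,y]x$, etc., plus a scalar counterterm, with coefficients polynomial in $\lambda$ and $\cos\phi$ (writing $\tau=\cos\phi+i\sin\phi$), verified by direct computation to have image $\sin\phi\,(2\cos\phi-1)(\lambda^2-1)^3(1+\lambda^2)\gamma$, and then converted to the form $\sum_i a_i\ot b_i\ot c_i$ by the Leibniz rule. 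Your substitute for this step --- ``the correction coefficients solve a finite linear system whose nondegeneracy rests on the hypothesis $\lambda^4\neq1$'' --- is precisely the statement to be proved: no system is written down, no solution is displayed, no nondegeneracy is checked. The gap is aggravated by a structural slip: the second identity is \emph{not} the image of the first under $\tau\leftrightarrow\tau^*$, because the right-hand sides have opposite signs. Indeed, if a $\tau$-independent tensor satisfied the first identity for every admissible $\tau$, swapping $\tau\leftrightarrow\tau^*$ would force $\sum_i a_i\,\partial_-b_i\,\partial_+c_i=-1$, contradicting the second; so the tensor must depend on $\tau$, as the $\cos\phi$-dependent coefficients in the paper's cycle confirm. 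Consequently your initial lift of \eqref{xyz}, whose terms $1\ot x\ot x$, $1\ot y\ot y$, $1\ot z\ot z$ contribute \emph{equally} to both diagonal blocks, produces $-1$ on both sides of $\gamma$, and the ``corrections'' are not a perturbation of it: they must carry the entire antisymmetric burden, i.e.\ one must show that even-coefficient combinations of $\hx\hy$, $\hy\hx$, $\hx\hz$, $\hz\hx$, $\hy\hz$, $\hz\hy$ --- the only antisymmetric outputs, each weighted by $\sin\phi$ --- collapse via \eqref{hat.y} and \eqref{hat.xz} to a nonzero scalar while their symmetric by-products cancel. You correctly flag this as ``the delicate point'', but flagging it is not resolving it; it \emph{is} the theorem.

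Two further cautions. First, solvability of such Ansatz systems has hidden exceptional loci that a bare nondegeneracy claim will not detect: even the paper's explicit cycle degenerates when $2\cos\phi=1$ (its normalizing scalar vanishes at $\tau=e^{\pm i\pi/3}$), so an argument of the kind you propose must either exhibit the solution or control the relevant determinant as a function of \emph{both} $\lambda$ and $\tau$, not of $\lambda$ alone. Second, the cycle property: your criterion that the Hochschild boundary $\mathrm{b}\omega$ lie in the image of $1-t$ is a reasonable reading of ``a cycle, though not a Hochschild cycle'', and on this point your plan is no vaguer than the paper's own laconic treatment; still, in your write-up it is another promissory note, and it interacts with the existence step, since cyclicity further constrains the admissible correction coefficients. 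In short: the reduction to two identities in $\storus$ is right and useful, and the diagnosis of the symmetric/antisymmetric tension is accurate, but the construction itself --- the mathematical content of the theorem --- is missing.
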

\begin{proof}
We write $\tau = \cos \phi + i \sin \phi$. Then by explicit computation we verify that
$$
\begin{aligned}
\frac{1}{4}&(1 - \lambda^4) x [D,y] [D,z] \\
& - \frac{1}{2}  \lambda^2 \cos\phi (1+\lambda^2)(2  \lambda^4 \cos\phi-\lambda^4 +2\cos\phi -1)\, x [D,z] [D,y] 
 -2\lambda^2 \\
& - \frac{1}{2} (2 \lambda^6 \cos^2\phi   -\lambda^6 \cos\phi +2 \lambda^4\cos^2\phi  + 2\lambda^4 \\
& \quad \quad   - 5 \lambda^4\cos\phi   
-  \lambda^3 \cos\phi + 2  \lambda^2 \cos^2\phi + 2 \cos^2\phi - \cos\phi) \lambda^2 \, y [D,x] [D,z] \\
&+ \frac{1}{2} \lambda (1+\lambda^2)(1+\lambda^4)(2\cos\phi -1)(\cos\phi -1)\, z [D,x] [D,y] \\
& - \frac{1}{2} (1-\lambda^4) (\cos\phi -1) \lambda^2 \, z [D,y] [D,x] \\
&+\frac{1}{4} (1+\lambda^2)(2\cos\phi -1)(2\lambda^4\cos\phi  +2 \cos\phi - \lambda^4 -\lambda^2) \lambda \, [D,x] y [D,z] \\
&+\frac{1}{2} \cos\phi \lambda^2 (2 \lambda^6 \cos\phi + 2 \lambda^4\cos\phi  + 2 \lambda^2 \cos\phi  + 2 \cos\phi \\
&\quad\quad - \lambda^6 - \lambda^2  -2 -4 \lambda^4) \, [D,z] [D,y] x \\
&- \frac{1}{2} \lambda^2 (1+\lambda^2)(1+\lambda^4)(2 \cos\phi -1)(\cos\phi -1) \, [D,x] z [D,y] =\\
& = \sin\phi (2\cos\phi -1) (\lambda^2 -1)^3 (1+\lambda^2) \gamma.
\end{aligned}
$$
Although the above formula uses terms like $[D,x] y [D,z]$, it can be easily converted to the desired form using
the Leibniz rule.
\end{proof}

\begin{remark}
The above cycle is certainly not unique and we provide its explicit form only to demonstrate its existence. Certainly, it 
could be simplified or rewritten in a more convenient form. 

Observe that for the image of this cycle to be nonvanishing we must have $\sin\phi \not= 0$ (which is the same condition 
as for the noncommutative torus, see \cite{PaSi}) as well as $\lambda^4 \not= 1$. It is interesting that even for some 
rational values of $\theta$ we have a non-degenerate cycle for the respective spectral triple.
\end{remark}

\begin{remark}
The above demonstrated cycle is not a Hochschild cycle. It remains an open problem whether a Hochschild cycle of the same type
exists for the noncommutative pillow. However, we would like to stress that in \cite{ReVa} the nonexistence of an orientation cycle for an 
orbifold is demonstrated for any cycle (not only for the Hochschild ones).
\end{remark}

\section{Quantum cones}\setcounter{equation}{0}
\subsection{Two-dimensional integrable differential calculi over quantum cones}\label{sec.dif.cone}
Homological smoothness and the complex differential geometry of quantum cones were recently established and studied in 
\cite{Brz:com}. In this section we prove that the  two-dimensional differential calculus over the quantum cone described in \cite{Brz:com} is integrable and thus establish differential smoothness of quantum cone algebras. 

Let $N$ be a positive integer. The coordinate algebra of the {\em quantum cone} $\cO(C^N_{q,\kappa})$ is defined as a complex $*$-algebra generated by self-dual $a$ and $b, b^*$, which satisfy relations
\begin{equation}\label{cone}
ab = q^N ba + \kappa [N]_q b, \qquad bb^* = \prod_{l=0}^{N-1} (q^{-l}a + \kappa [-l]_q), \qquad  b^*b = \prod_{l=1}^{N} (q^{l}a + \kappa [l]_q),
\end{equation}
where $q>0$, $\kappa \in \RR$ are parameters and, for all $n\in \ZZ$,  
\begin{equation}\label{q.integers}
[n]_q := \frac{1-q^{n}}{1-q}
\end{equation}
denotes $q$-integers. For $N\neq 1$, a linear basis of the space $\Vv(n)$ spanned by words of generators of length at most $n$ is
$$
\{a^ib^j, \, a^{k}b^{*l+1} \; |\; i+j \leq n ,\, k+ l \leq n-1, \},
$$
hence
$$
\dim \Vv(n) = {n+2 \choose n} +  {n+1 \choose n-1} = (n+1)^2.
$$
Consequently, 
$$
\gk (\cO(C^N_{q,\kappa})) =2.
$$
For $N=1$, $a$ can be expressed in terms of $bb^*$, so it becomes redundant. In this case we denote $b=z$ and $b^*=z^*$. Then relations become
\begin{equation}\label{disc}
z^*z - qzz^* = \kappa,
\end{equation}
 and the resulting algebra is the coordinate algebra of the {\em quantum disc} \cite{KliLes:two}, which we denote by $\cO(D_{q,\kappa})$. The linear basis is $z^iz^{*j}$, $i,j\in \NN$, hence the quantum disc algebra has  Gelfand-Kirillov dimension two. The special case $\kappa =0$ corresponds to the Manin quantum plane and the case $q=1$ and $\kappa \not= 0$ corresponds to the Moyal
 deformation of the plane.
 
 A differential $*$-calculus $\Omega (D_{q,\kappa})$ on $\cO(D_{q,\kappa})$ is freely generated by one-forms $dz$ and $dz^*$ subject to relations
\begin{equation}\label{calculus.d}
zdz = q^{-1}dz z, \qquad z^*dz = qdz z^*, \qquad dz\wedge dz^* = -q^{-1}dz^*\wedge dz, \qquad dz\wedge dz =0,
\end{equation}
and their $*$-conjugates; see e.g.\hspace{3pt}\cite{SinVak:ana}. Since $\Omega^1(D_{q,\kappa})$ is an $\cO(D_{q,\kappa})$-bimodule freely generated by  $dz$ and $dz^*$, the commutation relations between $dz$ and elements $\alpha$ of $\cO(D_{q,\kappa})$ induce an algebra automorphism $\sigma : \cO(D_{q,\kappa}) \to \cO(D_{q,\kappa})$, by 
\begin{equation}\label{auto}
\alpha dz = dz \sigma(\alpha), \qquad \mbox{for all $\alpha\in \cO(D_{q,\kappa})$}. 
\end{equation}
On the basis $z^kz^{*l}$, $k,l\in \NN$, of $\cO(D_{q,\kappa})$ this comes out as
\begin{equation}\label{auto.explicit}
\sigma(z^kz^{*l}) = q^{l-k} z^kz^{*l}.
\end{equation}
Note that   $\sigma$ also satisfies the equality 
\begin{equation}\label{auto*}
\alpha dz^* = dz^* \sigma(\alpha), \qquad \mbox{for all $\alpha \in \cO(D_{q,\kappa})$}.
\end{equation}
Finally, the commutation rules \eqref{auto} and \eqref{auto*} imply that $d(\alpha) =0$ if and only if $\alpha$ is a scalar multiple of the identity, i.e.\hspace{3pt}the calculus $\Omega (D_{q,\kappa})$ is connected.

 For all values of $N$, $\cO(C^N_{q,\kappa})$ embeds into $\cO(D_{q,\kappa})$ by the $*$-inclusion
\begin{equation}\label{inclusion}
a\mapsto zz^*, \qquad b\mapsto z^N.
\end{equation}
We define the calculus $\Omega (C^N_{q,\kappa})$ on $\cO(C^N_{q,\kappa})$, by restricting $\Omega (D_{q,\kappa})$ to $\cO(C^N_{q,\kappa})$. 

 \begin{theorem}\label{thm.cone.smooth}
 For all $N\in \NN$ and $\kappa\neq 0$, $\Omega (C^N_{q,\kappa})$ is a 2-dimensional connected integrable differential calculus on $\cO(C^N_{q,\kappa})$, hence the quantum cone algebras $\cO(C^N_{q,\kappa})$ are differentially smooth.
 \end{theorem}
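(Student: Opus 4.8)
The plan is to derive integrability from Lemma~\ref{lemma.Poincare}, exploiting the $\ZZ_N$-grading that the $*$-embedding \eqref{inclusion} places on the quantum disc. Declaring $z^kz^{*l}$ to have degree $k-l \bmod N$ turns $\cO(D_{q,\kappa})$ into a $\ZZ_N$-graded algebra $\bigoplus_j A_j$ whose degree-zero part $A_0$ is precisely the image of $\cO(C^N_{q,\kappa})$. I therefore set $B:=A_0=\cO(C^N_{q,\kappa})$ and, reading off the two free generators of the disc calculus \eqref{calculus.d}, take $A_+:=A_{-1}$ (the right $B$-module of coefficients of $dz$) and $A_-:=A_1$ (the coefficients of $dz^*$), so that the restricted calculus satisfies $\Omega^1(C^N_{q,\kappa})=dz\,A_{-1}\oplus dz^*A_1\cong A_+\oplus A_-$ as right $B$-modules, with volume form $\omega:=dz\wedge dz^*$ generating the degree-zero part $\Omega^2(C^N_{q,\kappa})=\omega B\cong B$. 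That this really is the two-dimensional differential calculus over $B$ with these data is the description recorded in \cite{Brz:com}, supplemented by the computations below.

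Several hypotheses are then immediate. Since $\Omega(C^N_{q,\kappa})$ is the restriction of $\Omega(D_{q,\kappa})$, one has $\Omega^m(C^N_{q,\kappa})\subseteq\Omega^m(D_{q,\kappa})=0$ for $m>2$ and $\ker d|_B\subseteq\ker d|_{\cO(D_{q,\kappa})}=\CC$, so the calculus is automatically at most two-dimensional and connected. For the product hypothesis of Lemma~\ref{lemma.Poincare}(c) I would compute the wedge under the identification $(a_+,a_-)\leftrightarrow dz\,a_++dz^*a_-$: the terms $dz\wedge dz$ and $dz^*\wedge dz^*$ vanish, and commuting the scalar coefficients through the generators by \eqref{auto} and \eqref{auto*} leaves only the mixed terms, yielding
\begin{equation*}
(a_+,a_-)\wedge(b_+,b_-)=\omega\big(\sigma(a_+)\,b_- - q\,\sigma(a_-)\,b_+\big).
\end{equation*}
This is exactly the form required by Lemma~\ref{lemma.Poincare}, with the invertible maps $\sigma_+:=\sigma|_{A_{-1}}$ and $\sigma_-:=-q\,\sigma|_{A_1}$, which preserve $A_\pm$ because $\sigma$ acts as $q^{-j}$ on each $A_j$ by \eqref{auto.explicit}.

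The crux, and the step I expect to be the main obstacle, is hypothesis (b): $A_+A_-=A_-A_+=B$, that is $A_1A_{-1}=A_{-1}A_1=A_0$; this is where $\kappa\neq0$ is decisive. Each product is a two-sided ideal of $B$ (an $A_0$-sub-bimodule of $A_0$), so it suffices to force $1$ into it, and I would do so by intersecting with the commutative subalgebra $\CC[a]$ and exhibiting two coprime polynomials there. On one side, $zz^*=a$ lies in $A_1A_{-1}$ and $z^*z=q\,a+\kappa$ lies in $A_{-1}A_1$; on the other, $z^{*(N-1)}z^{N-1}\in A_1A_{-1}$ (respectively $z^{N-1}z^{*(N-1)}\in A_{-1}A_1$) equals, by iterating \eqref{cone}--\eqref{disc}, a polynomial in $a$ whose constant term is the product of the $\kappa$-shifts $[l]_q\,\kappa$ and hence nonzero precisely because $\kappa\neq0$. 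A short check that the linear factor $a$ (respectively $q\,a+\kappa$) does not divide this second polynomial---immediate in the first case, and in the second following from $q^{l+1}\neq1$ for $q\neq1$ (the Moyal value $q=1$ being settled by the same root count)---shows the two polynomials are coprime in the principal ideal domain $\CC[a]$, so their ideal is all of $\CC[a]$ and $1$ belongs to the product. For $N=1$ the grading is trivial and (b) is vacuous, while for $N=2$ one has $A_1=A_{-1}$ and $1=\kappa^{-1}(z^*z-q\,zz^*)\in A_1A_1$ at once. Note that (b) also secures $\omega\in\Omega^2(C^N_{q,\kappa})$, hence genuine two-dimensionality. With (a)--(c) verified, Lemma~\ref{lemma.Poincare} (equivalently Theorem~\ref{thm.integrable}) gives that $\Omega(C^N_{q,\kappa})$ is integrable; together with connectedness this is the asserted differential smoothness of $\cO(C^N_{q,\kappa})$.
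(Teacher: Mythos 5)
Your proposal is correct and follows essentially the same route as the paper: identify $\cO(C^N_{q,\kappa})$ with the degree-zero component of the $\ZZ_N$-graded quantum disc algebra, take the structural description of the restricted calculus from \cite{Brz:com}, compute the product rule on $\Omega^1(C^N_{q,\kappa})$ (your formula agrees with the paper's, with the labels $+$ and $-$ swapped), and invoke Lemma~\ref{lemma.Poincare}. The one genuine difference is hypothesis (b): the paper simply cites \cite[Theorem~2.1]{Brz:com} for the statement that the $\ZZ_N$-grading is strong when $\kappa\neq 0$, whereas you prove $A_1A_{-1}=A_{-1}A_1=A_0$ directly, by exhibiting two coprime polynomials in $\CC[a]$ inside each product ideal. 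This makes the argument self-contained at exactly the point where the hypothesis $\kappa\neq 0$ enters, which is a worthwhile addition.

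One detail in that computation is misstated, although it does not damage the argument. You claim that $z^{N-1}z^{*(N-1)}$ is a polynomial in $a$ with nonzero constant term $\prod_l\kappa[l]_q$. In fact, by the first relation of \eqref{cone} (with $N$ replaced by $N-1$) one has $z^{N-1}z^{*(N-1)}=\prod_{l=0}^{N-2}\left(q^{-l}a+\kappa[-l]_q\right)$, and since $[0]_q=0$ the $l=0$ factor is just $a$, so the constant term is zero. The nonzero-constant-term statement holds only for $z^{*(N-1)}z^{N-1}=\prod_{l=1}^{N-1}\left(q^{l}a+\kappa[l]_q\right)$, which is the side on which you actually use it (to see that $a$ does not divide it). Fortunately, on the other side coprimality with $qa+\kappa$ does not need a nonzero constant term: the roots of $z^{N-1}z^{*(N-1)}$ are $0$ and $\kappa[l]_q$, $l=1,\ldots,N-2$, and the root $-\kappa/q$ of $qa+\kappa$ coincides with none of them precisely because $\kappa\neq0$ and $q^{l+1}\neq1$ (resp.\ $-1\neq l$ when $q=1$) --- which is exactly the root count you invoke. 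So the conclusion $1\in A_{-1}A_1$ stands; only the sentence about the constant term needs correcting.
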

 \begin{proof}
As a restriction of a connected calculus $\Omega (C^N_{q,\kappa})$ is connected.  $\cO(D_{q,\kappa})$ can be equipped with a grading by  elements of the cyclic group $\ZZ_N = \{0,1,\ldots, N-1\}$, defined by $\deg(z) =1$, $\deg(z^*) = N-1$, which is compatible with the $*$-operation in the sense that if $\deg(\alpha) = k$, then $\deg(\alpha^*) = N-k$. The embedding \eqref{inclusion} identifies $\cO(C^N_{q,\kappa})$ with the invariant subalgebra $\cO(D_{q,\kappa})_0$ of $\cO(D_{q,\kappa})$. By \cite[Theorem~2.1]{Brz:com} this grading is strong, provided $\kappa \neq 0$.
Note that $\sigma$ \eqref{auto} preserves the $\ZZ_N$-grading of $\cO(D_{q,\kappa})$. 

By \cite[Theorem~3.1]{Brz:com}, the restriction $\Omega (C^N_{q,\kappa})$ of $\Omega (D_{q,\kappa})$ to the differential calculus  on $\cO(C^N_{q,\kappa})$ is generated by the one-forms 
$$
dz z^*, \qquad db \propto dz z^{N-1}, \qquad dz^* z, \qquad db^* \propto dz^* z^{*N-1}.
$$
The first two one-forms generate the holomorphic part of $\Omega^1 (C^N_{q,\kappa})$, while the other two generate the anti-holomorphic part. The module of one-forms $\Omega^1 (C^N_{q,\kappa})$ is not free, but $\Omega^2 (C^N_{q,\kappa})$ is freely generated by the closed volume form $dz\wedge dz^*$:
$$ da \wedge da = (z \, dz^* + z^* \, dz)  \wedge ( z \, dz^* + z^* \, dz)  = (- z z^* + \frac{1}{q}  z^* z) \, dz \wedge dz^*  = 
- \frac{\kappa}{q} \, dz \wedge dz^*, $$ 
hence it can be identified with  $\cO(C^N_{q,\kappa})$.

Since $z^*$ and $z^{N-1}$ generate the $\cO(C^N_{q,\kappa})$-submodule of $\cO(D_{q,\kappa})$ consisting of elements of $\ZZ_N$-degree $N-1$, the holomorphic part of $\Omega^1 (C^N_{q,\kappa})$ consists of elements of the form
$$
dz \alpha_-, \qquad \deg(\alpha_-) = N-1.
$$
Similarly, the antiholomorphic forms are $dz^* \alpha_+$, $\deg(\alpha_+)=1$. This shows that 
$$
\Omega^1 (C^N_{q,\kappa}) \cong \cO(D_{q,\kappa})_1 \oplus \cO(D_{q,\kappa})_{N-1},
$$
where the right module isomorphism is
\begin{equation}\label{iso.omega}
dz\alpha_- +dz^*\alpha_+ \mapsto (\alpha_+, \alpha_-), \qquad \deg(\alpha_\pm) = \pm 1\, \mbox{mod}\, N.
\end{equation}
By relations  \eqref{calculus.d}, \eqref{auto} and \eqref{auto*}, 
\begin{equation}\label{product.cone}
(dz\alpha_- +dz^*\alpha_+)(dz\beta_- +dz^*\beta_+) = dz\wedge dz^*\left(\sigma(\alpha_-)\beta_+ -q\sigma(\alpha_+)\beta_- \right).
\end{equation}
In view of the isomorphism \eqref{iso.omega}, the product rule \eqref{product.cone} can be recast into the desired form
$$
(\alpha_+,\alpha_-)\wedge (\beta_+,\beta_-) = \sigma_+(\alpha_+)\beta_- + \sigma_-(\alpha_-)\beta_+,
$$
where 
$$
\sigma_+ = -q \sigma, \qquad \sigma_- = \sigma,
$$
with $\sigma$ given by \eqref{auto}. In this way the constructed calculus over the quantum cone meets all of the requirements of Lemma~\ref{lemma.Poincare}, and therefore it is integrable. 
\end{proof}

\begin{remark}
The integrability of the calculi over the quantum cones can also be established by employing Lemma~\ref{lem.princ}. The $\ZZ_N$-grading of $\cO(D_{q,\kappa})$ which yields $\cO(C^N_{q,\kappa})$ can be interpreted as the group algebra $\CC \ZZ_N$-coaction
$$
\varrho(z) = z\ot u, \qquad \varrho(z^*) = z^*\ot u^{N-1},
$$
where $u^N= 1$ is the generator of $\ZZ_N$ written multiplicatively. This coaction, which is principal by \cite[Theorem~2.1]{Brz:com}, extends to the differential calculus $\Omega(D_{q,\kappa})$, whose invariant part coincides with $\Omega(C^N_{q,\kappa})$ and satisfies assumptions (a)--(c) of Lemma~\ref{lem.princ}. Once it is shown that $\Omega(D_{q,\kappa})$ is an integrable calculus, the integrability of $\Omega(C^N_{q,\kappa})$ will follow from Lemma~\ref{lem.princ}.
\end{remark}

Using  \eqref{calculus.d} one easily checks that, for all $k,l\in \NN$, 
\begin{equation}\label{surjective}
dz\wedge dz^* z^kz^{*l} = d\left(-\frac{q^k}{[l+1]_q} dz z^kz^{*l+1}\right).
\end{equation}
If  $z^kz^{*l}\in \cO(C^N_{q,\kappa})$, i.e.\hspace{3pt}$k-l = mN$, for some $m\in \ZZ$, then  $k-l -1 = (m-1)N +N-1$, so that $\deg(z^kz^{*l+1}) = N-1$ and hence the argument of $d$ on the right hand side of \eqref{surjective} is a holomorphic form on $\cO(C^N_{q,\kappa})$. Therefore the map $d: \Omega^1 (C^N_{q,\kappa})\to \Omega^2 (C^N_{q,\kappa})$ is surjective. Since $\nabla = - \pi_{dz\wedge dz^*}\circ d\circ \Theta^{-1}$, where $\Theta^{-1}$ is the isomorphism constructed through Lemma~\ref{lemma.Poincare}, it follows that $\nabla$ is also surjective. Consequently, the integral associated to $\nabla$ vanishes everywhere on $\cO(C^N_{q,\kappa})$.

The divergence $\nabla$ can be computed using the explicit form of the isomorphism $\Theta^{-1}$ constructed in the proof of Lemma~\ref{lemma.Poincare}. As the first step, let us define the operations $\pz, \pzs: \cO(D_{q,\kappa}) \to \cO(D_{q,\kappa})$ by
$$
d\alpha = \pz(\alpha) dz + \pzs(\alpha)dz^* = dz \sigma(\pz(\alpha)) +dz^* \sigma(\pzs(\alpha)),
$$
where $\sigma$ is given explicitly in \eqref{auto.explicit}. Both $\pz$ and $\pzs$ are twisted derivations, i.e.\hspace{3pt}for all $\alpha, \beta\in \cO(D_{q,\kappa})$,
$$
\pz(\alpha\beta) = \pz(\alpha)\sigma^{-1}(\beta) +\alpha \pz(\beta),
$$
and similarly for $\pzs$. In fact, they are {\em q-derivations}, i.e.
\begin{equation}\label{qder}
\sigma\circ \pz \circ \sigma^{-1} = q\pz, \qquad \sigma\circ \pzs \circ \sigma^{-1} = q^{-1}\pzs .
\end{equation}
If we choose $r_\pm^i, s_\pm^i, \in \cO(C^N_{q,\kappa})$, such that $\deg(r_\pm^i) = \deg(s_\pm^i) = \pm 1\, \mbox{mod}\, N$ and $r_+^ir_-^i = s_-^is_+^i =1$ (summation implicit), whose existence is guaranteed by \cite[Theorem~2.1]{Brz:com}, then the formula \eqref{def.theta} combined with all the above identifications of forms on $\cO(C^N_{q,\kappa})$ and with \eqref{qder} gives
\begin{equation}\label{nabla}
\nabla(\phi) = q \pz (\phi(dzs_-^i)s_+^i) + q^{-1} \pzs (\phi(dz^*r_+^i)r_-^i),
\end{equation}
for all right $\cO(C^N_{q,\kappa})$-module maps $\phi: \Omega^1 (C^N_{q,\kappa}) \to \cO(C^N_{q,\kappa})$. We note in passing that the first order calculus $\Omega^1 (D_{q,\kappa})$ is the calculus associated to the twisted multi-derivation $(\pz,\pzs)$ in the sense of \cite[Section~3]{BrzElK:int}, and, for $N=1$, $\nabla$ in \eqref{nabla} is exactly the divergence associated to such a calculus. For $N\neq 1$, \eqref{nabla} coincides with the induced divergence on the base space of a quantum principal bundle obtained by the method described in \cite[Section~4]{BrzElK:int}.

When $\kappa =0$, relations \eqref{disc} define the quantum plane. In this case, the $\ZZ_N$-grading is not strong (apart from the trivial case $N=1$): there is no possible combination of $z^kz^{*k}$ and $z^{*N-k}z^{N-k}$, $k=1,\dots, N-1$, with coefficients of degree zero that would give the identity element. Furthermore $dzz^*$ and $dz^*z$ are not elements of $\Omega^1 (C^N_{q,0})$ (although their linear combination $dzz^* +q^{-1}dz^*z$ is). Finally, $\Omega^2 (C^N_{q,0})$ is generated by $dz\wedge dz^* a^{N-1}$, $dz\wedge dz^* b$ and $dz\wedge dz^* b^{*}$, which are not free; when $\kappa =0$,  relations \eqref{cone} allow one to write 0 as a linear combination (with non-zero coefficients from $\cO(C^N_{q,0})$) of any two out of $a^{N-1}, b, b^*$. This means that $\Omega^2 (C^N_{q,0})$ cannot be isomorphic to $\cO(C^N_{q,0})$, hence these differential calculi over the quantum Manin cones $C^N_{q,0}$ are not integrable.

\subsection{Spectral triple for the Moyal cone}
In the special case of $q=1$ and $\kappa \not= 0$, $\cO(D_{1,\kappa})$ is the  algebra of the Moyal plane, which could be also studied
from the angle of spectral triples. First, observe that the algebra $\cC^\infty(D_{1,\kappa})$ of the series $\sum a_{m,n} z^mz^{*n}$ with rapidly decreasing coefficients is  a subalgebra of $\Ss(\RR^2)$, the Schwartz functions 
on the plane with the product defined through the oscillating integral, 
$$ (f \ast g) (z) = (\pi \kappa)^{-2} \int_\CC d^2s \int_\CC  d^2t \;  f(z + s) g(z + t) e^{-2 i \kappa \Im(s^* t)} , \;\;\; z \in \CC.$$
 
It is easy to see that the action of the cyclic group $\ZZ_N$ on the space of Schwartz functions:
$$ \sigma(f)(z) := f \left( e^{\frac{2}{N} \pi i} z \right), $$ 
is an automorphism of this algebra. Let us consider the invariant subalgebra $\Ss(\RR^2)^{\ZZ_N}$ as the algebra of smooth
functions on the Moyal cone. We can now take the spectral triple for the Moyal plane, as constructed in \cite{Moyal}. 
Using the same procedure as in the case of the noncommutative pillow, we lift the action of $\ZZ_N$ to the Hilbert 
space, the latter being just $L^2(\RR^2) \otimes \CC^2$. The spectral triple over the Moyal plane is constructed 
with the diagonal action of the algebra by left Moyal-multiplication and the Dirac operator and chirality operator
of the form
$$ D = \left( \begin{array}{lr} 0 & \partial_z \\ \partial_{z^*} & 0 \end{array} \right) ,\;\;\;\;
     \gamma =  \left( \begin{array}{lr} 1 & 0 \\ 0 & -1 \end{array} \right).
$$     
Since we can decompose the Hilbert space into a direct sum of spaces on which the group $\ZZ_N$ acts by multiplication by
different roots of unity, it is easy to construct a subspace, which is preserved both by the Dirac operator and  by the action
of the invariant subalgebra. The restriction of the original spectral triple to that subspace gives a spectral triple for the 
Moyal cone.

If we try to consider a nondegenerate (Hochschild) cycle for the Moyal cone we need to observe first that its
construction for the Moyal plane is, in fact, based on the Weyl algebra of plane waves in the multiplier of the Moyal algebra. 
However, since there are no plane waves which are in the multiplier of the invariant subalgebra of the Moyal cone one
cannot use such a construction. 

On the other hand we are dealing here with the case of spectral triples over a locally compact noncommutative space, so we might relax the conditions
and demand that the cycle exists in the algebra of unbounded multiplier of the invariant algebra. Although to show the
existence for any $N$ appears to be rather a challenge, we shall demonstrate in the case of $N=2$ that this is indeed 
possible.

\begin{lemma}
Let $N=2$ and let $\Ss(\RR^2)^{\ZZ_2}$ be the algebra of the Moyal cone with the spectral triple as above. Then the
following cycle gives the volume form $\gamma$:

\begin{eqnarray*}
&& - [D, b^*]  a [D,b]  - 2 a [D,b] [D,b^*] + \frac{3}{2} a [D,b^*]  [D,b]  \\
 && \phantom{xxxxxxxxx} +3b [D,a] [D, b^*] - b [D,b^*] [D,a] + b^* [D,b] [D,a] 
= 4 \kappa^2 \gamma. 
\end{eqnarray*}
\end{lemma}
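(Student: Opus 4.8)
The plan is to verify the displayed operator identity directly; once the left-hand side is shown to equal $4\kappa^2\gamma$ the assertion follows, the passage to an honest cycle $\sum_i a_i\ot b_i\ot c_i$ of the required shape being effected afterwards by the graded Leibniz rule (which rewrites the middle-element terms such as $[D,b^*]a[D,b]$), exactly as in the pillow case treated above. First I would record the commutators $[D,\pi(f)]$ for the generators $a=zz^*$, $b=z^2$, $b^*=z^{*2}$ of the Moyal cone. Writing $L_f=\pi(f)$ for the left Moyal multiplication operator and using that $\pz,\pzs$ are derivations of the Moyal product, so $[\pz,L_f]=L_{\pz f}$ and $[\pzs,L_f]=L_{\pzs f}$, the diagonal form of the representation gives
$$
[D,\pi(f)]=\begin{pmatrix} 0 & L_{\pz f}\\ L_{\pzs f} & 0\end{pmatrix}.
$$
Evaluating $\pz,\pzs$ on the generators (using $\pz z=\pzs z^*=1$, $\pz z^*=\pzs z=0$ and the Leibniz rule; at $q=1$ one has $\sigma=\id$, so the twist disappears) yields $\pz a=z^*$, $\pzs a=z$, $\pz b=2z$, $\pzs b^*=2z^*$, the remaining two derivatives vanishing. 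With $Z=\pi(z)$ and $Z^*=\pi(z^*)$ this produces three explicit off-diagonal matrices, while $\pi(a)=ZZ^*$, $\pi(b)=Z^2$ and $\pi(b^*)=Z^{*2}$ are diagonal.

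Next I would substitute these into the six terms and multiply out the $2\times2$ matrices. Each summand is an even (diagonal) factor times two odd (off-diagonal) factors, hence is itself diagonal, so no off-diagonal cancellation is needed and the full sum is automatically of the form $\mathrm{diag}(P,Q)$. A short bookkeeping shows that the three terms $-2a[D,b][D,b^*]$, $3b[D,a][D,b^*]$ and $b^*[D,b][D,a]$ contribute to the $(1,1)$-entry $P$, producing a combination of $ZZ^*ZZ^*$, $Z^2Z^{*2}$ and $Z^{*2}Z^2$, whereas the remaining three contribute to the $(2,2)$-entry $Q$, producing a combination of $Z^*ZZ^*Z$, $ZZ^*Z^*Z$ and $Z^2Z^{*2}$.

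The decisive step is the normal ordering. Since $\pi$ is an algebra map, the defining relation $z^*z-zz^*=\kappa$, which is \eqref{disc} at $q=1$, gives $Z^*Z-ZZ^*=\kappa$; writing $M:=ZZ^*$ this reads $Z^*Z=M+\kappa$ and entails $ZM=(M-\kappa)Z$ and $Z^*M=(M+\kappa)Z^*$. These relations suffice to reduce every product occurring in $P$ and $Q$ to a polynomial in the single operator $M$, namely $Z^2Z^{*2}=M(M-\kappa)$, $Z^{*2}Z^2=(M+\kappa)(M+2\kappa)$, $ZZ^*ZZ^*=M^2$, $Z^*ZZ^*Z=(M+\kappa)^2$ and $ZZ^*Z^*Z=M(M+\kappa)$. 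I expect the crux of the whole computation to be precisely the cancellation of the $M^2$ and $M$ contributions: the precise rational coefficients appearing in the statement are exactly those that annihilate both, leaving $P=4\kappa^2$ and $Q=-4\kappa^2$, that is $\mathrm{diag}(P,Q)=4\kappa^2\gamma$. Establishing this cancellation, rather than any individual matrix product, is where the real work and the only genuine obstacle lie; everything else is routine bookkeeping.
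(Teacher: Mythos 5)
Your proposal is correct and takes essentially the same approach as the paper, whose entire proof is the remark that the identity is verified by explicit computation (the coefficients there being found via an Ansatz at most linear in each of $a,b,b^*$); your matrix bookkeeping and normal-ordering scheme is exactly that computation spelled out. Indeed, with $M=ZZ^*$ and $Z^*Z=M+\kappa$ the $(1,1)$-entry is $-8M^2+6M(M-\kappa)+2(M+\kappa)(M+2\kappa)=4\kappa^2$ and the $(2,2)$-entry is $-4(M+\kappa)^2+6M(M+\kappa)-2M(M-\kappa)=-4\kappa^2$, confirming the claimed cancellation and the value $4\kappa^2\gamma$.
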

The proof is by explicit computation and making the Ansatz that the cycle is at most linear in each entry in the generators $a,b,b^*$
of the cone algebra $\cO(C^2_{q,0})$.

Let us observe that again,  as in the case of the noncommutative pillow, the above cycle has a nonvanishing image
only if $\kappa \not= 0$ and that again the solution is not unique. For this particular Ansatz there are no Hochschild cycles 
which could give $\gamma$.

We believe that the construction of such cycles is possible in the noncommutative case for every Moyal cone, it is again an 
open and challenging problem to prove it for every $N$ and to determine whether it is possible to find a Hochschild cycle, which 
gives the orientation.

\section{Three-dimensional integrable differential calculi over quantum lens spaces}
\setcounter{equation}{0}
 The aim of this section is to prove differential smoothness of quantum lens spaces by constructing quantum principal bundles over them with integrable differential calculi that satisfy the assumptions of Lemma~\ref{lem.princ}.

The coordinate algebra $\cO(L_q(N;1,N))$  of the quantum lens space $L_q(N;1,N)$ is a $*$-algebra generated by $\xi,\zeta$ subject to the relations
\begin{subequations} \label{rel.lens}
\begin{gather}
\xi\zeta = q^{l}\zeta\xi, \qquad \xi\zeta^* = q^{l}\zeta^*\xi, \qquad \zeta\zeta^* = \zeta^*\zeta, \label{rel.lens1}\\
\xi\xi^* = \prod_{m=0}^{l-1}(1-q^{2m}\zeta\zeta^*), \qquad \xi^*\xi = \prod_{m=1}^l(1-q^{-2m}\zeta\zeta^*), \label{rel.lens2}
\end{gather}
\end{subequations}
where $q\in(0,1)$; see \cite{HonSzy:len}. In the classical limit $q=1$, this is the coordinate algebra of the singular lens space. A linear basis of the space $\Vv(n)$ spanned by words in generators $\xi,\xi^*,\zeta,\zeta^*$ of length at most $n$ is given by
\begin{equation}\label{basis}
\{\xi^i\zeta^j\zeta^{*k}, \, \xi^{*r+1}\zeta^s\zeta^{*t} \; |\; i+j+k \leq n ,\, r+s+ t \leq n-1, \},
\end{equation}
hence
$$
\dim \Vv(n) = {n+3 \choose 3} +  {n+2 \choose n-1} = \frac 16 (n+1)(n+2)(2n +3)
$$
Consequently, 
$$
\gk (\cO(L_q(N;1,N))) =3.
$$

The algebra $\cO(L_q(N;1,N))$ embeds as a $*$-algebra into $\cO(SU_q(2))$, the coordinate algebra of the quantum group $SU_q(2)$.  $\cO(SU_q(2))$ is a $*$-algebra generated by $\alpha$ and $\beta$ subject to the following relations
\begin{subequations}\label{su}
\begin{gather}
\alpha\beta = q\beta\alpha, \qquad \alpha\beta^* = q\beta^*\alpha, \qquad \beta\beta^* = \beta^*\beta, \label{su.a}\\
 \alpha\alpha^* = \alpha^*\alpha +(q^{-2}-1)\beta\beta^*, \qquad
\alpha\alpha^* + \beta\beta^* =1, \label{su.b}
\end{gather}
\end{subequations}
where $q\in(0,1)$; see \cite{Wor:com}. The embedding $\cO(L_q(N;1,N))\hookrightarrow \cO(SU_q(2))$ is 
\begin{equation}\label{linsu}
\xi \mapsto \alpha^N \quad \mbox{and} \quad  \zeta \mapsto  \beta. 
\end{equation}
Henceforth we view $\cO(L_q(N;1,N))$ as a subalgebra of $\cO(SU_q(2))$ via the embedding \eqref{linsu}. Next we  construct a differential calculus on $\cO(L_q(N;1,N))$ as the restriction of the left covariant three-dimensional calculus  $\Omega (SU_q(2))$ of Woronowicz \cite{Wor:twi} over $\cO(SU_q(2))$. The calculus $\Omega (SU_q(2))$ is a connected $*$-calculus generated by the one-forms $\omega_0$, $\omega_\pm$, which satisfy the following relations:
\begin{subequations}\label{eq.3Drel}
\begin{gather}
\omega_0\,\alpha \,=\, q^{-2}\,\alpha\,\omega_0\, , \qquad 
\omega_0\,\beta \,=\, q^2\,\beta\,\omega_0\hspace{3pt},\\
\omega_\pm\,\alpha \,=\, q^{-1}\,\alpha\,\omega_\pm\, , \qquad
\omega_\pm\,\beta \,=\, q\,\beta\,\omega_\pm\hspace{3pt}, \\
\omega_i\wedge \omega_i  =  0, \quad 
 \omega_+\wedge \omega_-=-q^2\,\omega_-\wedge \omega_+\, ,\quad
\omega_\pm \wedge \omega_0\,=\,-q^{\pm 4}\,\omega_0\wedge \omega_\pm\, ,
\end{gather}
\end{subequations}
and their $*$-conjugates with $\omega_0^* = -\omega_0$, $\omega_-^* = q\omega_+$. The differential is given by
\begin{subequations}\label{eq.3dif}
\begin{gather}
d \alpha \,=\, \alpha\,\omega_0-q\,\beta\,\omega_+\, ,\qquad
d \beta \,=\, -q^2\,\beta\,\omega_0 + \alpha\,\omega_-\, , \label{eq.3dif.a}\\
d \omega_0 =q\omega_-\wedge \omega_+\, ,\qquad
d \omega_+ =q^2(q^2+1)\,\omega_0\wedge \omega_+\,  . \label{eq.3dif.b}
\end{gather}
\end{subequations}
This is a calculus over $\cO(SU_q(2))$, since
$$
\omega_0 = \alpha^*d\alpha + q^{-2}\beta d\beta^*, \qquad \omega_+ = q^{-2}\alpha d\beta^* - q^{-1}\beta^*d\alpha,
$$
by \eqref{eq.3dif.a} and \eqref{su}.

\begin{theorem}\label{thm.lens}
For all values of $q\in(0,1)$ and $N\in \NN$, the restriction $\Omega (L_q(N;1,N))$ of the 3D-calculus $\Omega (SU_q(2))$ to  $\cO(L_q(N;1,N))$ is a three-dimensional integrable differential calculus. Consequently, $\cO(L_q(N;1,N))$ is a differentially smooth algebra. 
\end{theorem}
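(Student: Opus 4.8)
The plan is to exhibit $\cO(L_q(N;1,N))$ as the coinvariant subalgebra of a principal comodule algebra and then to invoke Lemma~\ref{lem.princ}, just as suggested for the quantum cones. Put $A=\cO(SU_q(2))$ and let $H=\CC\ZZ_N$, with generator $u$, $u^N=1$, coact by
\begin{equation*}
\varrho(\alpha)=\alpha\ot u,\qquad \varrho(\beta)=\beta\ot 1 .
\end{equation*}
This is nothing but the $\ZZ_N$-grading of $A$ with $\deg\alpha=1$, $\deg\beta=\deg\beta^*=0$; under the embedding \eqref{linsu} its degree-zero part is generated by $\alpha^N,\alpha^{*N},\beta,\beta^*$, so $B:=A^{co H}=\cO(L_q(N;1,N))$. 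First I would record that this grading is strong and hence that $A$ is a principal $H$-comodule algebra over $B$, which is the content of the identification of quantum lens spaces as $\ZZ_N$-quotients of $SU_q(2)$ in \cite{HonSzy:len}; this provides the canonical map, its inverse and the required splitting.

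Next I would check that the Woronowicz $3$D-calculus $\Omega(SU_q(2))$ is $H$-covariant and integrable. Its integrability, with top form $\omega:=\omega_0\wedge\omega_+\wedge\omega_-$ serving (up to normalisation) as integrating form, is one of the examples established in \cite{BrzElK:int}, so I take it as an input. Covariance is forced by $\varrho^{\Omega^{k+1}}\circ d=(d\ot\id)\circ\varrho^{\Omega^{k}}$: comparing coefficients of the free generators in $\varrho(d\alpha)=d\alpha\ot u$ and $\varrho(d\beta)=d\beta\ot 1$, using \eqref{eq.3dif.a}, yields
\begin{equation*}
\varrho(\omega_0)=\omega_0\ot 1,\qquad \varrho(\omega_+)=\omega_+\ot u,\qquad \varrho(\omega_-)=\omega_-\ot u^{N-1}.
\end{equation*}
Thus $\omega_0,\omega_+,\omega_-$ carry $\ZZ_N$-weights $0,1,-1$, the top form $\omega$ has weight $0$, and $\varrho(\omega)=\omega\ot 1$. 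This is precisely assumption (b) of Lemma~\ref{lem.princ} and it places $\omega$ in $\Omega^3 B$.

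It remains to verify assumptions (a) and (c). The main obstacle is (a): one must show that every coinvariant form already lies in the restricted calculus, $\left(\Omega^k A\right)^{co H}=\Omega^k B$. Using the free basis of $\Omega^\bullet(SU_q(2))$ in the monomials in $\omega_0,\omega_\pm$ and the induced grading, the coinvariant $k$-forms are exactly the $\sum_I f_I\,\omega_I$ with $\deg f_I=-\deg\omega_I$; the task is to write each such form as a $B$-combination of wedges of the differentials of $\xi,\zeta,\xi^*,\zeta^*$. Concretely this amounts to solving for the generators $\omega_0,\omega_\pm$ (suitably multiplied by degree-compensating elements of $A_{\mp1}$, which exist by strong gradedness) in terms of $d\xi,d\zeta,d\xi^*,d\zeta^*$ and their wedges, a computation in the spirit of the pillow and cone cases but genuinely heavier because the module of one-forms has rank three and three distinct $\ZZ_N$-weights are in play; organising this bookkeeping is where the real work lies.

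Assumption (c) then follows cleanly. Since $\omega$ is invariant, both $\wedge$ and $\pi_\omega$ preserve the $\ZZ_N$-grading, while the base forms in $\Omega^{3-k}B$ are homogeneous of weight $0$. Decomposing $\omega'=\sum_g\omega'_g$ into homogeneous parts, the hypothesis $\pi_\omega(\omega'\wedge\omega'')\in B$ for all $\omega''\in\Omega^{3-k}B$ forces $\pi_\omega(\omega'_g\wedge\omega'')=0$ for every weight $g\neq 0$; strong gradedness gives $\Omega^{3-k}A=\Omega^{3-k}B\cdot A$, so by right $A$-linearity of $\pi_\omega$ this vanishing propagates to all of $\Omega^{3-k}A$, and the nondegeneracy of the pairing $\omega'\mapsto[\omega''\mapsto\pi_\omega(\omega'\wedge\omega'')]$ (that is, the bijectivity in condition (4) of Theorem~\ref{thm.integrable} applied to $\Omega(SU_q(2))$) yields $\omega'_g=0$ for $g\neq0$; hence $\omega'$ is coinvariant and (a) places it in $\Omega^k B$. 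With (a)--(c) in hand, Lemma~\ref{lem.princ} shows that $\Omega(L_q(N;1,N))$ is integrable with integrating form $\omega$. Connectedness is inherited, since $\ker(d\!\mid_B)\subseteq\ker(d\!\mid_A)=\CC.1$ because $\Omega(SU_q(2))$ is connected and $d$ on $B$ is the restriction of $d$ on $A$; as $\gk(\cO(L_q(N;1,N)))=3$ and the calculus is $3$-dimensional, Definition~\ref{def.smooth.algebra} gives differential smoothness.
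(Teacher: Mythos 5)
Your overall strategy coincides with the paper's: realise $\cO(SU_q(2))$ as a principal $\CC\ZZ_N$-comodule algebra with coinvariant subalgebra $\cO(L_q(N;1,N))$, check that the 3D-calculus is covariant and integrable with an invariant volume form, and invoke Lemma~\ref{lem.princ}. Your derivation of the weights of $\omega_0,\omega_\pm$ and the invariance of the volume form (assumption (b)) agree with the paper, and your argument for assumption (c) --- decomposing $\omega'$ into $\ZZ_N$-homogeneous components, using strong gradedness to write $\Omega^{3-k}A=\Omega^{3-k}B\cdot A$, and then invoking the bijectivity of the maps $\ell^k_{\pi_\omega}$ over $\cO(SU_q(2))$ to kill every component of nonzero weight --- is correct and in fact cleaner and more conceptual than the paper's, which checks degrees against a few explicitly chosen forms $\omega''$.

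There is, however, a genuine gap: you never verify assumption (a), namely that $\left(\Omega^k (SU_q(2))\right)^{co\,\CC\ZZ_N}=\Omega^k(L_q(N;1,N))$ for $k=1,2$; you only describe it as bookkeeping and indicate where the work lies. This is not a routine step, and it is precisely the core of the paper's proof. As the paper stresses when introducing covariant calculi, only the inclusion $\Omega^k B\subseteq\left(\Omega^k A\right)^{coH}$ is automatic; the reverse inclusion can fail, and neither principality nor strong gradedness forces it --- it is a property of the particular calculus. The paper establishes it by explicit identities: from \eqref{eq.3dif.a}, \eqref{eq.3Drel} and \eqref{su} one gets $\beta\alpha^*\omega_+=\frac{1}{q^2-1}\left(\beta\, d\beta^*-q^{-2}(d\beta^*)\beta\right)\in\Omega^1(L_q(N;1,N))$, hence also $\beta\beta^*\omega_0$ lies there; then $d\alpha^N=[N]_{q^{-2}}\left(\alpha^N\omega_0-q\alpha^{N-1}\beta\,\omega_+\right)$, combined with the fact that $\alpha^{*N}\alpha^N$ is a polynomial in $\beta\beta^*$ with constant term $1$, isolates $\omega_0$; further $\alpha^{N-1}\omega_+=\frac{q^{-2}}{q^{2N}-1}\left(q^{3N}(d\beta^*)\alpha^N-\alpha^N d\beta^*\right)$, and there are analogous degree-two verifications. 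These identities depend on the invertibility of $q^2-1$, $q^{2N}-1$ and $[N]_{q^{-2}}$ (so the restriction $q\in(0,1)$ genuinely enters) and constitute the substance of the theorem. Note moreover that your argument for (c) quietly uses (a), both to identify $\left(\Omega^{3-k}A\right)^{coH}$ with $\Omega^{3-k}B$ and to conclude from coinvariance of $\omega'$ that $\omega'\in\Omega^k B$, so the gap propagates and leaves (c) incomplete as well. A minor point: the principality of the coaction \eqref{coac.zn} is proved in \cite{Brz:smo}, not in \cite{HonSzy:len}.
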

\begin{proof}
As explained in \cite[Theorem~2]{Brz:smo}, $\cO(SU_q(2))$ can be made into a  principal $\CC\ZZ_N$-comodule algebra with coinvariants equal to $\cO(L_q(N;1,N))$. The Hopf $*$-algebra  $\CC\ZZ_N$ is generated by a unitary group-like element $u$ such that $u^N=1$. 
The $\CC\ZZ_N$-coaction is a $*$-algebra map given on the generators of $\cO(SU_q(2))$ by
\begin{equation}\label{coac.zn}
\alpha \mapsto \alpha \ot u, \qquad \beta \mapsto \beta \ot 1.
\end{equation}
The coinvariant subalgebra is generated by $\beta$, $\alpha^N$ and thus can be identified with $\cO(L_q(N;1,N))$ by \eqref{linsu}. 

The calculus $\Omega (SU_q(2))$ is a $\CC\ZZ_N$-covariant calculus by the $*$-coaction
\begin{equation}\label{cov.coa}
\omega_0\mapsto \omega_0\ot 1, \qquad \omega_\pm\mapsto \omega_\pm\ot u^{\pm 1}.
\end{equation}
It is shown in \cite[Section~4.1]{BrzElK:int} that  $\Omega (SU_q(2))$   is  integrable
with an integrating volume form $\omega = \omega_-\wedge\omega_0\wedge \omega_+$.
Thus we are in a situation to which Lemma~\ref{lem.princ} can be applied provided assumptions (a)--(c) are satisfied.

First we look at the coinvariant part of $\Omega (SU_q(2))$ and study it degree by degree. The coinvariant part of $\Omega^1 (SU_q(2))$ can be identified with
$$
\langle \omega_0,\hspace{3pt}\alpha^*\omega_+,\hspace{3pt}\alpha^{N-1}\omega_+,\hspace{3pt}\alpha\omega_-,\hspace{3pt}\alpha^{*N-1}\omega_-\rangle \cO(L_q(N;1,N)).
$$
Thus is suffices to show that $\omega_0, \alpha^*\omega_+,\alpha^{N-1}\omega_+,\alpha\omega_-, \alpha^{*N-1}\omega_- \in \Omega^1 (L_q(N;1,N))$, i.e.\hspace{3pt}that each of these forms can be expressed as linear combinations of $bdb', db\, b'$, $b,b'\in  \cO(L_q(N;1,N))$.  Starting with the second of the relations \eqref{eq.3dif.a} and its $*$-conjugate, and using the commutation rules \eqref{eq.3Drel}, \eqref{su} we find
$$
\beta d\beta^* = \beta\beta^*\,\omega_0 + q^2 \beta\alpha^*\,\omega_+, \qquad d\beta^*\beta  = q^2\beta\beta^*\,\omega_0 + q^2 \beta\alpha^*\,\omega_+.
$$
Hence
$$
\beta\alpha^*\omega_+ = \frac{1}{q^2-1}\left( \beta d\beta^* - q^{-2}(d\beta^*)\beta\right) \in \Omega^1 (L_q(N;1,N)).
$$
Consequently, also $\beta\beta^*\omega_0  \in \Omega^1 (L_q(N;1,N))$.  The first of relations \eqref{eq.3dif.a} combined with the commutation rules \eqref{eq.3Drel}, \eqref{su} yields
\begin{equation}\label{dan}
d\alpha^N = [N]_{q^{-2}}\left(\alpha^N\omega_0 - q\alpha^{N-1}\beta \omega_+\right),
\end{equation}
where $[N]_{q^{-2}}$ denotes the $q$-integer \eqref{q.integers}. By \eqref{su.b}, $\alpha^{*k}\alpha^k$ is a polynomial in $\beta\beta^*$ with the constant term 1, hence \eqref{dan} gives
\begin{equation}\label{omega0}
[N]_{q^{-2}}\omega_0   = - \alpha^{*N}d\alpha^N + f_1(\beta\beta^*)\beta\beta^*\omega_0 + f_2(\beta\beta^*)\beta\alpha^* \omega_+,
\end{equation}
for some polynomials $f_1$ and $f_2$. Hence  $\omega_0  \in \Omega^1 (L_q(N;1,N))$. The $*$-conjugate of the second of equations \eqref{eq.3dif.a} implies that $\alpha^*\omega_+ \in \Omega^1 (L_q(N;1,N))$ too.

Again using the $*$-conjugate of the second of  equations \eqref{eq.3dif.a} as well as relations \eqref{eq.3Drel}, \eqref{su}  we find
$$
\alpha^{N-1}\omega_+ = \frac{q^{-2}}{q^{2N}-1} \left( q^{3N} (d\beta^*)\alpha^N - \alpha^N d\beta^*\right) \in \Omega^1 (L_q(N;1,N)).
$$
Taking suitable $*$-conjugates we conclude, therefore, that all the generating forms of $\Omega^1 (SU_q(2))^{co \CC\ZZ_N}$ are elements of $\Omega^1 (L_q(N;1,N))$, i.e.\
$$
\Omega^1 (SU_q(2))^{co \CC\ZZ_N}= \Omega^1 (L_q(N;1,N)),
$$
as required. 

Next, studying two-forms one finds that the coinvariant part of $\Omega^2 (SU_q(2))$ is 
$$
\langle \omega_-\wedge \omega_+ ,\, \alpha^*\omega_+\wedge \omega_0,\hspace{3pt}\alpha^{N-1}\omega_+\wedge \omega_0,\hspace{3pt}\alpha\omega_-\wedge \omega_0,\hspace{3pt}\alpha^{*N-1}\omega_-\wedge \omega_0\rangle \cO(L_q(N;1,N)).
$$
All but the first of these generating two-forms are obtained as products of one-forms which have already been shown to be in $\Omega^1 (L_q(N;1,N))$; thus they are elements of $\Omega^2 (L_q(N;1,N))$. Using relations \eqref{su.b} and \eqref{eq.3Drel} one easily finds that
$$
\omega_-\wedge \omega_+ = \frac{1}{1-q^2}\left(q^{-1}\alpha\omega_-\wedge \alpha^* \omega_+ + q^5\alpha^* \omega_+ \wedge \alpha\omega_-\right).
$$
Hence also $\omega_-\wedge \omega_+ $ is a linear combination of products of one-forms in $\Omega^1 (L_q(N;1,N))$, thus it is in $\Omega^2 (L_q(N;1,N))$. This proves that
$$
\Omega^2 (SU_q(2))^{co \CC\ZZ_N}= \Omega^2 (L_q(N;1,N)).
$$
Finally, the volume form $\omega = \omega_-\wedge\omega_0\wedge \omega_+$ is the product of forms in $\Omega (L_q(N;1,N))$, thus both the coinvariant part of $\Omega^3 (SU_q(2))$ equals $\Omega^3 (L_q(N;1,N))$ and $\omega$ is coinvariant. Therefore, assumptions (a)--(b) of Lemma~\ref{lem.princ} are satisfied. 

To check  assumption (c) it might be convenient to interpret the $\CC\ZZ_N$-coaction in terms of the $\ZZ_N$-grading. In that way $\Omega (SU_q(2))$ is a $\ZZ_N$-graded algebra with the $*$-compatible grading  given on generators by
$$
\deg(\alpha) = \deg(\omega_+) =1, \qquad \deg(\beta)= \deg(\omega_0) =0.
$$
As is shown above, $\Omega (L_q(N;1,N))$ is the degree-zero subalgebra of $\Omega (SU_q(2))$. If we take any $\omega' = a_-\omega_- +a_0\omega_0 + a_+\omega_+ \in \Omega^1 (SU_q(2))$ and assume that, for all $\omega'' \in \Omega^2 (L_q(N;1,N))$, $\omega'\wedge \omega'' \in \Omega^3 (L_q(N;1,N))$, i.e.\hspace{3pt}$\deg(\omega'\wedge \omega'') =0$, then in particular, taking $\omega'' = \omega_-\wedge \omega_+$ we find
$$
0 = \deg( \omega'\wedge \omega_-\wedge \omega_+) = \deg (a_0 \omega_0\wedge \omega_-\wedge \omega_+) = \deg(a_0).
$$
Similarly, by taking $\omega'' = \alpha^* \omega_0\wedge \omega_+$ and $\omega'' = \alpha \omega_-\wedge \omega_0$, one finds that $\deg(a_\mp) = \pm1$. Therefore, $\deg(\omega') =0$, as required.

In a similar way, assuming that $\omega' = a_-\omega_0\wedge \omega_+ +a_0\omega_-\wedge\omega_+ + a_+\omega_-\wedge\omega_0 \in \Omega^2 (SU_q(2))$ is such that for all $\omega'' \in \Omega^1 (L_q(N;1,N))$, $\omega'\wedge \omega'' \in \Omega^3 (L_q(N;1,N))$, and choosing $\omega''$ to be $\omega_0$, $ \alpha^*  \omega_+$ and $\alpha \omega_-$, one finds that $\deg(a_0) =0$ and $\deg(a_\pm) = \pm 1$. Therefore, $\deg(\omega') =0$, as needed. This proves that  assumption (c) of Lemma~\ref{lem.princ} is also satisfied, and consequently that $\Omega (L_q(N;1,N))$  is a  three-dimensional integrable calculus, as needed.
\end{proof}

A divergence on $\cO(SU_q(2))$ corresponding to the 3D-calculus was derived in \cite[Section~4.1]{BrzElK:int}. The corresponding divergence on $\cO(L_q(N;1,N))$, computed from the formula \eqref{nablas} comes out as, for all $\phi\in \Ii_1 (L_q(N;1,N))$,
$$
\nabla  (\phi) = \partial_0\left( \hat{\phi}\left(\omega_0\right)\right) + q^{-2}\partial_+\left( \hat{\phi}\left(\omega_+\right)\right) + q^{2}\partial_-\left( \hat{\phi}\left(\omega_-\right)\right),
$$
where $\partial_i$ are defined by $da = \partial_i(a)\omega_i$. The maps $\hat{\phi}$  are defined by \eqref{hat.phi} and can be computed in terms of $\phi$:
$$
\hat{\phi} (\omega_0) = \phi(\omega_0), \qquad \hat{\phi}(\omega_+) = x_0\phi\left(\omega_+\alpha^{N-1}\right) \alpha^{*N-1}+ \sum_{i=1}^{N-1}x_i\phi\left(\omega_+\alpha^{*}\right)\alpha (\beta\beta^*)^i,
$$
$$
 \hat{\phi}(\omega_-) = y_0\phi\left(\omega_-\alpha^{*N-1}\right) \alpha^{N-1}+ \sum_{i=1}^{N-1}y_i\phi\left(\omega_-\alpha\right)\alpha ^{*}(\beta\beta^*)^i,
$$
where $x_i, y_i\in \CC$ are solutions of 
$$
x_0\alpha^{N-1} \alpha^{*N-1}+ \sum_{i=1}^{N-1}x_i\alpha^{*}\alpha (\beta\beta^*)^i = 1, \qquad y_0\alpha^{*N-1}\alpha^{N-1}+ \sum_{i=1}^{N-1} y_i\alpha\alpha ^{*}(\beta\beta^*)^i =1\, .
$$
The existence of such $x_i, y_i$ is proven in \cite[Lemma~2]{Brz:smo}. 

Finally, it is shown in \cite[Section~4.1]{BrzElK:int} that the integral associated to the 3D-calculus coincides with a scalar multiple of the normalized integral on the Hopf algebra $\cO(SU_q(2))$ or the invariant Haar integral on $SU_q(2)$ described in \cite[Appendix~1]{Wor:com}. The formula \eqref{integrals} implies that the integral on $\cO(L_q(N;1,N))$ is the restriction of the Haar integral on $SU_q(2)$, given by 
\begin{equation}\label{integral}
\Lambda\left(\zeta^k\zeta^{*k}\right) =  \frac{1}{~{}~{}~{}~[k]_{q^{-2}}}, 
\end{equation}
and zero on all other elements $\xi^i\zeta^j\zeta^{*k}$,  $\xi^{*i+1}\zeta^j\zeta^{*k}$, $i,j,k,\in \NN$, $j\neq k$ of the linear basis \eqref{basis} for $\cO(L_q(N;1,N))$.

\section*{Acknowledgements} 
The work on this project started during two visits of the first author to the Institute of Physics, Jagiellonian University; he would like to thank the members of the Institute for their hospitality.


\begin{thebibliography}{99}{} 


\bibitem{BegSmi:com} E.J.\hspace{3pt}Beggs \& S.P.\hspace{3pt}Smith, {\em Non-commutative complex differential geometry}, J.\hspace{3pt}Geom.\hspace{3pt}Phys.\hspace{3pt}{\bf 72} (2013), 7--33.

\bibitem{BraEll:sph} O.\hspace{3pt}Bratteli, G.A.\hspace{3pt}Elliott, D.E.\hspace{3pt} Evans \& A.\hspace{3pt}Kishimoto,  {\em Noncommutative spheres.\hspace{3pt}I.}  Internat.\hspace{3pt}J.\hspace{3pt}Math.\hspace{3pt}{\bf 2} (1991), 139--166.

\bibitem{Brz:con} T.\hspace{3pt}Brzezi\'nski, {\em Non-commutative connections of the second kind}, J.\hspace{3pt}Algebra Appl.\hspace{3pt}{\bf 7} (2008), 557--573.



\bibitem{Brz:smo} T. Brzezi\'nski, {\em On the smoothness of the noncommutative pillow and quantum teardrops}, SIGMA {\bf 10} (2014), 015.

\bibitem{Brz:com} T.\hspace{3pt}Brzezi\'nski, {\em Complex geometry of quantum cones}, Fortsch.\hspace{3pt}Physik {\bf 62} (2014),  875--880.

\bibitem{BrzElK:int} T.\hspace{3pt}Brzezi\'nski, L.\hspace{3pt}El Kaoutit \& C.\hspace{3pt}Lomp, {\em Non-commutative integral forms and twisted multi-derivations}, J.\hspace{3pt}Noncommut.\hspace{3pt}Geom.\hspace{3pt} {\bf 4} (2010), 281--312. 



\bibitem{BrzFai:tea} T. Brzezi\'nski \& S.A. Fairfax, {\em Quantum teardrops}, Commun.\hspace{3pt}Math.\hspace{3pt}Phys.\hspace{3pt}{\bf 316} (2012), 151--170. 

\bibitem{BrzHaj:Che} T.\hspace{3pt}Brzezi\'nski \& P.M.\hspace{3pt}Hajac, {\em The Chern-Galois character},
Comptes Rendus Math. (Acad.\hspace{3pt}Sci.\hspace{3pt}Paris Ser.\hspace{3pt}I)  {\bf 338} (2004), 113--116.


\bibitem{Connes}
A.\hspace{3pt}Connes, {\em Noncommutative Geometry.} Academic Press, New York 1994.

\bibitem{Dab:spi} L.\hspace{3pt}D\c abrowski, {\em Spinors and theta deformations}, Rus.\hspace{3pt}J.\hspace{3pt}Math.\hspace{3pt}Phys.\hspace{3pt}{\bf 16} (2009), 404--408.

\bibitem{Dub:der} M.\hspace{3pt}Dubois-Violette, 
{\em D\'erivations et calcul diff\'erentiel non commutatif}, { C.\hspace{3pt}R.\hspace{3pt}Acad.\hspace{3pt}Sci.\hspace{3pt}Paris, Ser.\hspace{3pt}I} {\bf 307} (1988), 403--408.

\bibitem{DubKer:non} M.\hspace{3pt}Dubois-Violette, R.\hspace{3pt}Kerner \&  J.\hspace{3pt}Madore, {\em Noncommutative differential geometry of matrix algebras}, J.\hspace{3pt}Math.\hspace{3pt}Phys.\hspace{3pt}{\bf 31} (1990), 316--322.

\bibitem{Eva:orb} D.E.\hspace{3pt}Evans, {\em Some non-commutative orbifolds,} [In:] {\em Ideas and Methods
in Mathematical Analysis, Stochastics, and Applications.}  eds
S.\hspace{3pt}Albeverio, J.E.\hspace{3pt}Fenstad, H.\hspace{3pt}Holden \& T.\hspace{3pt}Lindstr\o m, Cambridge
University Press, Cambridge (1992),  344--364.

\bibitem{EvaKaw:sym} D.E.\hspace{3pt}Evans \& Y.\hspace{3pt}Kawahigashi, {\em Quantum Symmetries on Operator Algebras}, Oxford University Press, Oxford (1998).

\bibitem{Moyal}
V.\hspace{3pt}Gayral, J.M.\hspace{3pt}Gracia-Bondía, B.\hspace{3pt}Iochum, T.\hspace{3pt}Sch\"ucker \& J.C.\hspace{3pt}Varilly,
{\em Moyal planes are spectral triples}, Commun.\hspace{3pt}Math.\hspace{3pt}Phys.\hspace{3pt}{\bf 246} (2004), 569--623.


\bibitem{HonSzy:len} J.H.\hspace{3pt}Hong \& W.\hspace{3pt}Szyma\'nski, {\em Quantum lens spaces and graph algebras}, Pacific J.\hspace{3pt}Math.\hspace{3pt}{\bf 211} (2003), 249--263.


\bibitem{KhaLan:hol} M.\hspace{3pt}Khalkhali, G.\hspace{3pt}Landi \& W.D.\hspace{3pt}van Suijlekom, {\em Holomorphic structures on the quantum projective line}, Int.\hspace{3pt}Math.\hspace{3pt}Res.\hspace{3pt}Notices, Vol. 2011, No.\hspace{3pt}4 (2011), 851--884.

\bibitem{KliLes:two} S.\hspace{3pt}Klimek \& A.\hspace{3pt}Lesniewski, {\em A two-parameter quantum deformation of the unit disc.}
J.\hspace{3pt}Funct.\hspace{3pt}Anal.\hspace{3pt}{\bf 115} (1993), 1--23.

\bibitem{KraLen:gro} G.R.\hspace{3pt}Krause \& T.H.\hspace{3pt} Lenagan, {\em Growth of Algebras and Gelfand-Kirillov Dimension}, Revised edition, American Mathematical Society, Providence, RI (2000).


\bibitem{Kra:Hoc} U.~Kr\"{a}hmer, {\em On the Hochschild (co)homology of quantum homogeneous spaces}, Israel J.\hspace{3pt}Math.\hspace{3pt}{\bf 189} (2012), 237--266.


\bibitem{Man:gau} Yu.I.\hspace{3pt}Manin, {\em Gauge Field Theory and Complex Geometry}, Springer-Verlag, Berlin (1988). 

 \bibitem{McCRob:Noe} J.C.\hspace{3pt}McConnell \&  J. C.\hspace{3pt}Robson, {\em Noncommutative Noetherian Rings}. Revised edition, American Mathematical Society, Providence, RI (2001).


\bibitem{Adalgott} P.\hspace{3pt}Olczykowski \&  A.\hspace{3pt}Sitarz, {\em Real spectral triples over noncommutative Bieberbach manifolds}, 
J.\hspace{3pt}Geom.\hspace{3pt}Phys.\hspace{3pt}{\bf 73} (2013), 91--103.

\bibitem{PaSi} M.\hspace{3pt}Paschke \& A.\hspace{3pt}Sitarz, {\em On spin structures and Dirac operators on the noncommutative torus}, 
Lett.\hspace{3pt}Math.\hspace{3pt}Phys.\hspace{3pt}{\bf 77} (2006), 317--327.

\bibitem{Pod:sph}
P.\hspace{3pt}Podle\'s,  {\em Quantum spheres,} { Lett.\hspace{3pt}Math.\hspace{3pt}Phys.} {\bf 14} (1987), 193--202.


\bibitem{ReVa} 
A.\hspace{3pt}Rennie \& J.C.\hspace{3pt}Varilly, {\em Orbifolds are not commutative geometries},
J.\hspace{3pt}Aus.\hspace{3pt}Math.\hspace{3pt} Soc.\hspace{3pt}{\bf 84} (2001), 109--116.

\bibitem{Sch:rep}  H.-J.\hspace{3pt}Schneider,
	{\em Representation theory of Hopf Galois extensions,}
	Israel J.\hspace{3pt}Math.\hspace{3pt} {\bf 72} (1990),    196--231.

\bibitem{SinVak:ana} S.\hspace{3pt}Sinel'shchikov \& L.\hspace{3pt}Vaksman, {\em On q-analogues of bounded symmetric domains and Dolbeault complexes,} Math.\hspace{3pt}Phys.\hspace{3pt}Anal.\hspace{3pt}Geom.\hspace{3pt}{\bf 1} (1998), 75--100. 


\bibitem{Thu:geo} W.P.\hspace{3pt}Thurston, {\em The Geometry and Topology of Three-Manifolds,}  Edited by Silvio Levy, http://www.msri.org/publications/books/gt3m/

\bibitem{SiVe} 
A.\hspace{3pt}Sitarz \& J.J.\hspace{3pt}Venselaar, {\em Real spectral triples on 3-dimensional noncommutative lens spaces}
arXiv:1312.5690 (2013).

\bibitem{Van:rel} M.\hspace{3pt}Van den Bergh, {\em A relation between Hochschild homology and cohomology for Gorenstein rings}, Proc.\
Amer.\hspace{3pt}Math.\hspace{3pt}Soc.\hspace{3pt}{\bf 126} (1998), 1345--1348; Erratum: Proc.\hspace{3pt}Amer.\hspace{3pt}Math.\hspace{3pt}Soc.\hspace{3pt}{\bf 130} (2002), 2809--2810.

\bibitem{Van:cre} M.\hspace{3pt}Van den Bergh, {\em Non-commutative crepant resolutions (with some corrections)}. ArXiv:math/0211064v2 (2002). Also in: {\em The Legacy of Niels Henrik Abel}, O.A.\hspace{3pt}Laudal \& R.\hspace{3pt}Piene (eds), Springer-Verlag, Berlin (2004), pp.\hspace{3pt}749--770.

\bibitem{Walters} S. Walters, {\em Projective modules over the non-commutative sphere}, J.\hspace{3pt}London Math.\hspace{3pt}Soc.\hspace{3pt} {\bf 51} (1995),
589--602.

\bibitem{Wor:com} S.L.\hspace{3pt}Woronowicz, {\em Compact matrix pseudogroups}, Comm.~Math.~Phys. {\bf 111} (1987), 613--665.

\bibitem{Wor:twi}
S.L.\hspace{3pt}Woronowicz, {\em Twisted ${\rm SU}(2)$ group. An example of a 
noncommutative differential calculus.} Publ.\hspace{3pt}Res.\hspace{3pt}Inst.\hspace{3pt}Math.\hspace{3pt}
Sci.\hspace{3pt}{\bf 23} (1987),
117--181.




\end{thebibliography}
\end{document}